\documentclass[11pt]{amsart}
\usepackage[T1]{fontenc}

\usepackage{amsmath,amsthm,amsfonts,amssymb,latexsym,mathrsfs,graphicx}
\usepackage[utf8]{inputenc}

\usepackage{hyperref}
\usepackage{longtable}
\usepackage[capitalize, noabbrev]{cleveref}
\usepackage[all]{xy}

\usepackage{tikz-cd}

\usepackage{enumerate}
\usepackage[shortlabels]{enumitem}
\usepackage{color}
\usepackage{comment}
\newcommand{\Q}{\mathbb Q}

\newcommand{\Irr}{\operatorname{Irr}}
\newcommand{\Gal}{\operatorname{Gal}}

\newtheorem{theorem}{Theorem}[section]
\newtheorem{corollary}[theorem]{Corollary}
\newtheorem{lemma}[theorem]{Lemma}
\newtheorem{proposition}[theorem]{Proposition}
\theoremstyle{definition}
\newtheorem{definition}[theorem]{Definition}

\newtheorem{remark}[theorem]{Remark}

\newtheorem{maintheorem}{Theorem}

\newenvironment{enumeratei}{\begin{enumerate}[\upshape (a)]}
	{\end{enumerate}}

\def\irr#1{{\rm Irr}(#1)}
\def\cent#1#2{{\bf C}_{#1}(#2)}

\def\syl#1#2{{\rm Syl}_#1(#2)}

\def\oh#1#2{{\bf O}_{#1}(#2)}

\def\aut#1{{\rm Aut}(#1)}

\def\fit#1{{\bf F}(#1)}
\def\frat#1{{\bf \Phi}(#1)}

\newcommand{\F}{{\mathbb F}}

\def\fitd#1{{\bf F}_{2}(#1)}
\def\irr#1{{\rm Irr}(#1)}

\def\cent#1#2{{\bf C}_{#1}(#2)}

\def\syl#1#2{{\rm Syl}_#1(#2)}

\def\norm#1#2{{\bf N}_{#1}(#2)}
\def\oh#1#2{{\bf O}_{#1}(#2)}

\def\aut#1{{\rm Aut}(#1)}

\def\fit#1{{\bf F}(#1)}

\def\bg#1#2{{\bf B}_{#1}(#2)}

\def\Q{{\mathbb Q}}
\def\irr#1{{\rm Irr}(#1)}

\def\cent#1#2{{\bf C}_{#1}(#2)}
\def\syl#1#2{{\rm Syl}_#1(#2)}
\def\oh#1#2{{\bf O}_{#1}(#2)}

\def\ker#1{{\rm ker}(#1)}
\def\norm#1#2{{\bf N}_{#1}(#2)}

\def \mod#1{\, {\rm mod} \, #1 \, }

\mathchardef\coso="2023

\usepackage[normalem]{ulem}
\usepackage{cancel}
\usepackage[capitalize, noabbrev]{cleveref}

\newcommand{\GEN}[1]{\left\langle #1 \right\rangle}

\newcommand{\ngk}[1]{\Gamma_{\rm{GK}}(#1)}

\begin{document}
	
	\title[Gruenberg-Kegel graph of character-quadratic or semi-rational groups]{The Gruenberg-Kegel graph of finite solvable groups that are character-quadratic or semi-rational}

\begin{abstract} 
A finite group $G$ is said to be \emph{semi-rational} if the set of generators of each cyclic subgroup of $G$ is contained in at most two $G$-conjugacy classes. This is equivalent to the following condition: for every column of the character table of $G$, the values appearing in the column are contained in a quadratic extension of the field of rational numbers (possibly a different one for each column). When the analogous condition holds for the rows, that is, when the field of values of every irreducible character is contained in a quadratic extension of the rationals, we say that the group is \emph{character-quadratic} (these groups are often called \emph{quadratic rational} in the literature).
We obtain several results concerning the structure of the Gruenberg–Kegel graph of a finite solvable group that is either character-quadratic or semi-rational. More precisely, we first provide a complete classification of such graphs in the disconnected case. Also, we prove that if the graph has at most three vertices and the group is nontrivial, then it belongs to an explicit list of 20 graphs (in the semi-rational case, this result is proved under the additional assumption that the order of the group is not divisible by $17$), and all of them are realizable except perhaps one. Finally, we show that if the graph has four vertices, then it must have at least four edges.  


\end{abstract}

\author[I. Crispi]{Irene Crispi}
\address{Irene Crispi, Dipartimento di Matematica e Informatica U. Dini,\newline
	Universit\`a degli Studi di Firenze, viale Morgagni 67/a,
	50134 Firenze, Italy.}
\email{irene.crispi@yahoo.it}

\author[S. C. Debón]{Sara C. Debón}
\address{Sara C. Debón, Departamento de Matemáticas,
	Universidad de Murcia, Campus de Espinardo, 30100 Murcia, Spain.}
\email{sara.cebelland@um.es}

\author[E. Pacifici]{Emanuele Pacifici}
\address{Emanuele Pacifici, Dipartimento di Matematica e Informatica U. Dini,\newline
	Universit\`a degli Studi di Firenze, viale Morgagni 67/a,
	50134 Firenze, Italy.}
\email{emanuele.pacifici@unifi.it}

\author[\'A. del R\'io]{\'Angel del R\'io}
\address{\'Angel del R\'io, Departamento de Matemáticas,
	Universidad de Murcia, Campus de Espinardo, 30100 Murcia, Spain.}
\email{adelrio@um.es}

\author[M. Vergani]{Marco Vergani}
\address{Marco Vergani, Dipartimento di Matematica e Informatica U. Dini,\newline
	Universit\`a degli Studi di Firenze, viale Morgagni 67/a,
	50134 Firenze, Italy.}
\email{marco.vergani@unifi.it}

\thanks{The third and fifth authors are partially supported by INdAM-GNSAGA. The second and fourth  authors are partially supported by Grant PID2024-155576NB-I00 funded by MICIU/AEI/10.13039/501100011033/FEDER, UE, and by Grant 22004/PI/22 funded by Fundación Séneca of Región de Murcia, Spain.}    

\keywords{Finite groups; Semi-rational and character-quadratic groups; Gruenberg-Kegel graph.}
\subjclass[2020]{Primary 20D60. Secondary 20C15, 20E45.}

\maketitle

Throughout the following discussion, every group is tacitly assumed to be finite. Over the last decades, a number of graphs associated with groups have been introduced and studied (see, for instance, \cite{Cam} for a recent survey), and each of these graphs captures specific arithmetic or combinatorial aspects of the underlying group. In this paper, we focus on the \emph{Gruenberg–Kegel graph}, which we abbreviate as the \emph{GK-graph} and which is also known in the literature as the \emph{prime graph}. The GK-graph of a group $G$ is the simple undirected graph $\ngk G$ whose vertices are the primes occurring as the order of an element of $G$, with two distinct vertices $p$ and $q$ adjacent if and only if $G$ contains an element of order $pq$.

Besides its intrinsic interest as a tool for describing the spectrum of element orders in a group, and hence the commutation between elements of different prime order, the GK-graph has proved remarkably effective in detecting structural properties of groups. For example, in a seminal unpublished work, K.W. Gruenberg and O. Kegel proved that a solvable group is a Frobenius or a $2$-Frobenius group (see the paragraph preceding Lemma~\ref{2FrobStructure}) if and only if its GK-graph is disconnected \cite{W,ZM}. Moreover, the isomorphism type of some groups is determined by the associated GK-graph \cite{AKK10,BC15,CM} and many further properties of GK-graphs have been unveiled \cite{L,W}. The graphs that arise as GK-graphs within several important classes of groups have been completely classified: in particular, the unlabeled graphs that occur as the GK-graph of a solvable group are precisely those whose complement is $3$-colorable and triangle-free \cite{GKLNS}.

Rational groups, that is, groups whose character table has only rational entries, have been extensively studied \cite{FeitSeitz1989,Gow,Kletzing1984,Thompson2008}. More recently, several generalizations of the concept of a rational group, allowing character values to lie in quadratic extensions of the rationals, have also attracted considerable attention \cite{Bachle,BMP,CD,Navarro2009,tentquadraticrat,V}. This has led to the introduction of notions such as semi-rational groups, character-quadratic groups (also called quadratic rational groups in the literature), and inverse semi-rational groups (see \Cref{DefSRCQ} and the paragraph that follows).
The aim of this paper is to contribute to the classification of the GK-graphs of solvable groups satisfying one of these ``quadratic-like'' conditions.

\enlargethispage{0.5cm}
The classification of the GK-graphs of solvable rational groups, initiated in \cite{BKMdR}, was completed in \cite{DGR}. An almost complete classification of the GK-graphs of  solvable inverse semi-rational groups was also obtained in \cite{BKMdR}. More precisely, the authors proved that the GK-graph of any non-trivial solvable inverse semi-rational group belongs to a list of 22 graphs, and they constructed examples realizing 18 of these possibilities. More recently, it was shown that two of the remaining four graphs cannot occur as GK-graphs of solvable inverse semi-rational groups \cite{DGR2}, whereas the existence of solvable inverse semi-rational groups realizing the other two graphs remains an open problem.
Semi-rational Frobenius groups and character-quadratic Frobenius groups were classified in \cite{PV}, 
and it turns out that these two families of groups coincide. 

Our first result in the present paper is a detailed description of the character-quadratic $2$-Frobenius and  semi-rational 2-Frobenius groups. In the following statement, for a positive integer $n$, we write $C_n$ for a cyclic group of order $n$.

\begin{maintheorem} Let $G$ be a $2$-Frobenius group and let $F$ denote the Fitting subgroup of $G$. If $G$ is either character-quadratic or semi-rational, then one of the following conclusions holds.
\begin{enumeratei} 
\item $F$ is a $2$-group, and the Frobenius group $G/F$ is of the form $C_3\rtimes C_2$ or $C_5\rtimes C_4$.
\item The set of prime divisors of $|F|$ is contained in $\{2,3\}$, and the Frobenius group $G/F$ is of the form $C_5\rtimes C_2$, $C_7\rtimes C_3$, $C_7\rtimes C_6$ or $C_{13}\rtimes C_6$.
\end{enumeratei}
Moreover, for each of the above structures, there exists a $2$-Frobenius group that is both character-quadratic and semi-rational.
\end{maintheorem}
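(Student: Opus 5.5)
We propose to argue through the standard structure of a $2$-Frobenius group. There is a normal series $1<F<K<G$ in which $K$ is a Frobenius group with kernel $F$ and complement $K/F$, and $G/F$ is a Frobenius group with kernel $K/F$. Moreover, $K/F$ and $G/K$ are cyclic, $|G/K|$ divides $|\aut{K/F}|$, and $K/F$ has odd order.

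Both character-quadratic and semi-rational groups are closed under quotients. Hence $G/F$ is a Frobenius group of the same type, and its kernel $K/F$ is cyclic. The classification in \cite{PV} of semi-rational (equivalently, character-quadratic) Frobenius groups then places $G/F$ in a short explicit list. With a cyclic kernel of odd order, that list should be exactly
\[C_3\rtimes C_2,\quad C_5\rtimes C_2,\quad C_5\rtimes C_4,\quad C_7\rtimes C_3,\quad C_7\rtimes C_6,\quad C_{13}\rtimes C_6.\]
We expect $C_{13}\rtimes C_6$ to survive because the field $\Q(\zeta_{13})^{C_6}$ is quadratic.

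Next we bound the primes dividing $|F|$. The key point is that $K/F\cong C_p$ acts fixed-point-freely on $F$. So, for each prime $r$ dividing $|F|$, the subgroup $K/F$ acts Frobeniusly on a $G$-chief factor $V$ of $F$, an elementary abelian $r$-group. The group $G$ induces on $V$ an action of $G/F$ in which $K/F$ is fixed-point-free, and $V\rtimes (G/\cent{G}{V})$ is a section of $G$.

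The quadratic conditions should force strong restrictions on this module. In the semi-rational case, an element $v$ of order $r$ has $|\norm{G}{\langle v\rangle}:\cent{G}{v}|\ge (r-1)/2$. In the character-quadratic case, a linear character $\lambda$ of $V$ extends or induces with field of values containing $\Q(\zeta_r)^{T}$, where $T$ is the stabilizer action. Since $G/F$ has order at most $78$, and $\cent{G}{v}$ must meet the complement $G/K$ controlledly (the fixed-point-free $K/F$ cannot fix $v$), we get $(r-1)/2 \le |G/K|\le 6$. This leaves $r\in\{2,3,5,7,11,13\}$. We then eliminate $r\ge 5$ by a finer argument: we need $\cent{G/K}{v}$-orbits to cover the generators of $\langle v\rangle$ up to two classes, and we need Frobenius action by $C_p$ on $V$, so $p\mid |V|-1$ and $V$ is a faithful irreducible $\F_r[C_p\rtimes C_m]$-module.

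For $G/F=C_3\rtimes C_2$ or $C_5\rtimes C_4$, the same counting should also exclude $r=3$ and force $F$ to be a $2$-group. This case split by $G/F$, with explicit module computations over small fields, is the main obstacle. Here we would first try reducing to $V\rtimes (G/F)$ and computing rationality of classes directly. In particular, for $C_5\rtimes C_4$ acting on an $\F_3$-module of dimension $4$, we would check that some element of $V$ fails semi-rationality and that some character field is of degree $4$.

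Finally, for existence we would exhibit explicit examples. In case (a) these are $\F_2^2\rtimes S_3\cong S_4$ and $\F_2^4\rtimes(C_5\rtimes C_4)$. In case (b) they are the modules $\F_3^4$ or $\F_2^4$ for $C_5\rtimes C_2$, $\F_2^3\rtimes(C_7\rtimes C_3)$ and $\F_2^3\rtimes(C_7\rtimes C_6)$-type groups, and an $\F_3^3$-module (since $13\mid 3^3-1$) for $C_{13}\rtimes C_6$. We would verify the rationality conditions by direct class and character computations, possibly using GAP.
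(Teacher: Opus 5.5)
Your first step, reading off $G/F$ from the classification in \cite{PV}, is exactly the paper's. The real content of the theorem is excluding every prime $r\ge 5$ from $\pi(F)$, and $r=3$ when $G/F\cong C_5\rtimes C_4$. You defer this to an unspecified ``finer argument'', and the counting you set up does not supply it by itself. The inequality $|\bg G v|\ge (r-1)/2$ is a constraint that each element of order $r$ must satisfy, not a contradiction. To conclude, you must single out an element (or character) that violates it, and you never say which one.

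Two side remarks. The bound $|\bg G v|\le |G/K|$ is not justified, since $K/F$ may normalize $\langle v\rangle$ without centralizing it, giving $|\bg G v|=p$. Also, the passage to the section $V\rtimes (G/F)$ must be replaced by a passage to a quotient, because the quadratic conditions are inherited by quotients but not by sections.

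The missing idea is the following. Factor out $\oh{r'}{F}$ and then the Frattini subgroup, so that $F$ becomes an elementary abelian $r$-group while $G/F$ is unchanged. By \Cref{dimensions}, $\dim F=|H|\dim \cent F H$, so the cyclic complement $H\cong C_m$ of the Frobenius group $G/F$ fixes some $1\neq z\in F$. Write $F=\langle z\rangle\times F_0$ with $F_0$ invariant under $H$; then $FH/F_0\cong C_{rm}$. Inducing a faithful linear character of $FH/F_0$ to $G$ gives $\chi\in\Irr(G)$ and $x$ of order $rm$ with $\Q(\chi)=\Q(x)=\Q_{rm}$ (\Cref{FieldsOfValues2Frob}). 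This field has degree at least $4$ when $r\ge 5$ or $(r,m)=(3,4)$.

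For $r\ge 5$ alone, the element $z$ already suffices, which is your counting done with the right vector. In the reduced group, $\cent G z=\norm G{\langle z\rangle}=FH$, so $\bg G z=1$ and the $r-1$ generators of $\langle z\rangle$ are pairwise non-conjugate. This contradicts semi-rationality of $z$, which holds in the character-quadratic case too by \Cref{semiratpel}.

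Your plan for $C_5\rtimes C_4$ over $\F_3$, namely finding an element of $V$ that is not semi-rational, cannot work, since every element of order $3$ is semi-rational. You need an element of order $12$, such as $zh$ with $\langle h\rangle=H$, or a character with field $\Q_{12}$. For $C_3\rtimes C_2$, excluding $r=3$ needs no counting at all, since $|F|$ is coprime to $|K/F|$.

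Two of your proposed existence examples do not exist. When $F$ is elementary abelian, $G/F$ acts faithfully on $F$, because the unique minimal normal subgroup $K/F$ of $G/F$ acts fixed-point-freely. But ${\rm GL}_3(2)$ has order $168$ and Sylow $7$-normalizer of order $21$, so it contains no $C_7\rtimes C_6$. Likewise, the normalizer of a subgroup of order $13$ in ${\rm GL}_3(3)$ is ${\rm \Gamma L}(1,27)$, which induces only automorphisms of order dividing $3$ on it; so no Frobenius group $C_{13}\rtimes C_6$ acts on $\F_3^3$. You could use instead $\F_3^6$ for $C_7\rtimes C_6$ (as $3$ has order $6$ modulo $7$, one gets $C_7\rtimes C_6\le {\rm \Gamma L}(1,3^6)$) and $\F_2^{12}$ for $C_{13}\rtimes C_6$.

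Finally, rather than checking examples one by one, the paper proves \Cref{2FrobConverse}. By a Clifford-theoretic argument, every $2$-Frobenius group with $F$ abelian of squarefree exponent, $\pi(F)\subseteq\{2,3\}$ and $G/F$ in the list is both character-quadratic and semi-rational. This settles all existence claims at once.
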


We refer the reader to \Cref{2FrobQRSR}, \Cref{2FrobConverse} and \Cref{existence} for a complete formulation of Theorem~A, which actually yields a full classification of character-quadratic or semi-rational $2$-Frobenius groups in the case when the Fitting subgroup is abelian of squarefree exponent. Again, in the latter situation, the two families of groups coincide. Combining this with the aforementioned theorem of Gruenberg and Kegel and with the classification of \cite{PV}, the disconnected graphs of solvable character-quadratic or semi-rational groups can be easily obtained (\Cref{Disconected}).

\begin{table}
	\setlength{\tabcolsep}{1mm} 
	\def\arraystretch{0.75} 
	\centering
	\begin{tabular}{|c|c|c|c|c|c|c|}
		
		\hline

		\begin{tikzpicture}
			\draw[fill=black] (1/2,0) circle (2pt);
			\draw[fill=black] (3/2,0) circle (2pt);
			
			\node at (1/2,-0.5) {$2$};
			\node at (1.5,-0.5) {$3$};
		\end{tikzpicture}

		&  \begin{tikzpicture}
			\draw[fill=black] (0.5,0) circle (2pt);
			\draw[fill=black] (3/2,0) circle (2pt);
			
			\node at (0.5,-0.5) {$2$};
			\node at (1.5,-0.5) {$5$};
			
		\end{tikzpicture}
		
		&  \begin{tikzpicture}
			\draw[fill=black] (0.5,0) circle (2pt);
			\draw[fill=black] (3/2,0) circle (2pt);
			
			\node at (0.5,-0.5) {$3$};
			\node at (1.5,-0.5) {$7$};
		\end{tikzpicture}
		
		&\begin{tikzpicture}
			\draw[fill=black] (0,0) circle (2pt);
			\draw[fill=black] (0.7,0) circle (2pt);
			\draw[fill=black] (0,-0.7) circle (2pt);
			\node at (0,0.5) {$2$};
			\node at (0.7,0.5) {$3$};
			\node at (0,-1) {$5$};
			\draw[thick] (0,-0.7) -- (0,0);
		\end{tikzpicture}
		
		&\begin{tikzpicture}
			\draw[fill=black] (0,0) circle (2pt);
			\draw[fill=black] (0.7,0) circle (2pt);
			\draw[fill=black] (0,-0.7) circle (2pt);
			\node at (0,0.5) {$2$};
			\node at (0.7,0.5) {$3$};
			\node at (0,-1) {$5$};
			\draw[thick]  (0,0) -- (0.7,0);
		\end{tikzpicture}
		
		&\begin{tikzpicture}
			\draw[fill=black] (0,0) circle (2pt);
			\draw[fill=black] (0.7,0) circle (2pt);
			\draw[fill=black] (0,-0.7) circle (2pt);
			\node at (0,0.5) {$2$};
			\node at (0.7,0.5) {$3$};
			\node at (0,-1) {$7$};
			\draw[thick]  (0,0) -- (0.7,0);
		\end{tikzpicture}

		&\begin{tikzpicture}
			\draw[fill=black] (0,0) circle (2pt);
			\draw[fill=black] (0.7,0) circle (2pt);
			\draw[fill=black] (0,-0.7) circle (2pt);
			\node at (0,0.5) {$2$};
			\node at (0.7,0.5) {$3$};
			\node at (0,-1) {$13$};
			\draw[thick]   (0,0) -- (0.7,0);
			
		\end{tikzpicture}
		
		\\\hspace{2cm} & \hspace{2cm} & \hspace{2cm}  & \hspace{2cm}  & \hspace{2cm}  & \hspace{2cm}  & \hspace{2cm} 
		
		\\ \hline

	\end{tabular}
	
	\bigskip
	
	\caption{\label{Disconected} Disconnected GK-graphs of character-quadratic or semi-rational solvable groups}
\end{table}

It was proved in \cite{CD} that the prime spectrum of a solvable semi-rational group is contained in $\{2,3,5,7,13,17\}$, while in \cite{tentquadraticrat} it was proved that the prime spectrum of a solvable character-quadratic group is contained in $\{2,3,5,7,13\}$. 
 Moreover, for each $p\in \{2,3,5,7,13\}$ there is a solvable character-quadratic and semi-rational group of order divisible by $p$. 
However, to the best of our knowledge, it remains open whether there exists a solvable semi-rational group of order divisible by $17$, and this appears to be a difficult problem.
The second main result of this paper is the following.

\begin{maintheorem} Let $G$ be a nontrivial solvable group. Assume that $G$ is character-quadratic, or that $G$ is semi-rational of order coprime with $17$. If the GK-graph of $G$ has at most three vertices, then it is one of the graphs in \Cref{Atmost3}. Conversely, all the graphs in \Cref{Atmost3} arise as the GK-graph of a solvable quadratic-rational and semi-rational group, with the possible exception of the graph (s).
\end{maintheorem}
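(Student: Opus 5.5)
The plan is to split the argument into two parts. The first part shows that the GK-graph belongs to the list in \Cref{Atmost3}. The second part realizes the graphs in that list, except graph (s), by explicit constructions.

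For the first part, I would start from the arithmetic restrictions quoted above. The prime spectrum of a solvable character-quadratic group lies in $\{2,3,5,7,13\}$ \cite{tentquadraticrat}. The same holds for a solvable semi-rational group of order coprime to $17$ \cite{CD}. Moreover, $2$ divides $|G|$ for a nontrivial such group: a group of odd order has nonreal characters, which forces non-quadratic fields unless the group is very special. In fact, for semi-rational groups of odd order only $C_3$-type behaviour survives, so this needs a separate check, and I would handle the odd-order case directly. Thus the vertex set is a subset of $\{2,3,5,7,13\}$ of size at most three, and there are finitely many candidate labelled graphs. The disconnected ones are exactly those in \Cref{Disconected}, by the Gruenberg–Kegel theorem combined with Theorem~A and \cite{PV}. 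So it remains to decide which connected graphs on at most three vertices occur. Connected graphs on two vertices are single edges $\{p,q\}$. On three vertices they are paths or triangles.

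To exclude candidates I would use reduction to suitable sections. Both properties pass to quotients, and a Hall-type or Sylow-normalizer argument restricts which prime pairs can appear. The key tool is that if $p\ge 5$ is in the spectrum, then a $\{p\}$-section $P/\Phi(P)$ is acted on by a group whose image in $\mathrm{Aut}$ must realize the required rationality. Concretely, $\norm G{\langle x\rangle}/\cent G{x}$ has index at most $2$ in $\aut{\langle x\rangle}$. Hence for $x$ of order $p\in\{5,7,13\}$, the group $\norm G{\langle x\rangle}$ induces a subgroup of order $(p-1)$ or $(p-1)/2$ in $C_{p-1}$. This forces primes dividing $(p-1)/2$ into the spectrum: $2$ for $p=5$; $3$ for $p=7$; $2,3$ for $p=13$. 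It also prevents $p$ from being adjacent to any prime $r$ with an $r$-element centralizing a $p$-element, unless further conditions hold. Counting these constraints, a vertex $13$ needs both $2$ and $3$ present and $13$ isolated or nearly so, while $7$ needs $3$. The pairs $\{5,7\}$, $\{5,13\}$ and $\{7,13\}$ can never occur together in three vertices. This leaves, for each admissible vertex set such as $\{2\},\{2,3\},\{2,5\},\{2,3,5\},\{2,3,7\},\{2,3,13\},\{3\},\{3,7\}$, a short case analysis of edges. I expect the main obstacle to be excluding specific edges, for instance $5$–$3$, or $7$–$2$ in $\{2,3,7\}$. Here I would take a minimal counterexample, pass to $G/\Phi(G)$ and to a chief factor $V$ that is a $p$-group. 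I would then analyze the module action of $G/\cent G V$ on $V$, using that the relevant elements must be conjugate to enough of their powers. This is the analogue of the arguments in \cite{BKMdR,DGR} for rational and inverse semi-rational groups, adapted here to the quadratic setting.

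For the second part, I would realize each surviving graph by explicit groups. The disconnected ones come from Theorem~A and \Cref{existence}. The edges $\{2,3\}$ and the others come from direct products: the direct product of character-quadratic and semi-rational groups keeps the field of each row a compositum, so products must be chosen carefully. It is better to use products with one factor rational, for example $S_3$, $S_4$, or $C_2$, which preserve both properties. Triangles and paths come from suitable semidirect products $V\rtimes H$ with $V$ an elementary abelian group and $H$ from the Frobenius complement list. These would be checked by computing character fields and conjugacy of generators. Graph (s) is left open as stated.
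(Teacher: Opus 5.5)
Your outline has two genuine gaps in the forward direction. The first is the odd-order case. Your reduction of the vertex sets is fine: that $5$ forces $2$, $7$ forces $3$, and $13$ forces both is \Cref{VerticesAndEdges}(a,b). Note, though, that in the character-quadratic case the fact that $\bg G x$ has index at most $2$ in $\aut{\langle x\rangle}$ for $|x|$ prime is \Cref{semiratpel}; it is not automatic.

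The claim that a nontrivial such group has even order is false. $C_3$ and the Frobenius group $C_7\rtimes C_3$ are character-quadratic and semi-rational (graphs (b) and (g)). Odd order makes the fields imaginary, not non-quadratic. The case you postpone to ``a separate check'' is exactly where the connected graph $3-7$ must be excluded, and nothing in your outline does it. What is needed is this: in a group of odd order no nontrivial character or element is real. So the quadratic fields $\Q(\chi)$, $\chi\neq 1_G$ (respectively $\Q(x)$, $x\neq 1$) are imaginary, and $G$ is \texttt{cut} (\cite[Proposition~2.5]{PV}). Then either quote \cite[Theorem~A]{BKMdR}, or argue directly. An element $x$ of order $21$ would be semi-rational, so $\bg G x$ would have index at most $2$ in $\aut{\langle x\rangle}\cong C_2\times C_6$. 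Hence $\bg G x$ would have even order, which is impossible when $|G|$ is odd.

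The second gap is that your three-vertex analysis aims at the wrong target, and its key step is missing. The edges $5-3$ and $7-2$ cannot be excluded: they occur in (k), (l), (n), (p), all realized by \texttt{cut} groups. Nor is $13$ ``isolated or nearly so'': the triangle (t) is realized by $H\times{\rm S}_3$ with $\ngk H$ equal to (q). For a vertex set $\{2,3,p\}$ with $p\in\{5,7,13\}$, the only connected graphs to rule out are the paths $2-p-3$.

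This does not need a minimal counterexample or a chief-factor analysis. It follows from the element-level argument of \Cref{VerticesAndEdges}(c,d), based on quadratic conjugacy (\Cref{qrfacts}(c)). Let $x$ have order $pt$ with $p\equiv 1$ modulo $4$ and $t\notin\{2,p\}$, and choose $m$ of order $4$ modulo $p$ with $m\equiv 1$ modulo $t$. Some $g$ with $x^g=x^{m^2}$ then normalizes but does not centralize $\langle x\rangle$, while $g^2$ centralizes $x$. Since $g$ also centralizes $x^p$, the involution of $\langle g\rangle$ times $x^p$ has order $2t$. Similarly, for $q\equiv 1$ modulo $6$ and $x$ of order $qs$ with $s\notin\{3,q\}$, take $m$ of order $6$ modulo $q$ with $m\equiv 1$ modulo $s$; this gives an element of order $3s$. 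Thus $5-3$ forces $2-3$, $7-2$ forces $3-2$, and $13-3$ or $13-2$ forces $2-3$.

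Your sentence that the normalizer argument ``prevents $p$ from being adjacent to any prime $r$ with an $r$-element centralizing a $p$-element'' is vacuous, since that is what adjacency means. The proposed module analysis is never specified.

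For the converse, you name no groups for the connected graphs. The paper takes (d), (f), (j), (k), (l), (n), (o), (p) from the \texttt{cut} groups of \cite[Theorem~A]{BKMdR}, and obtains (r) and (t) as $H\times C_2$ and $H\times{\rm S}_3$.
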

 
In view of the last part of Theorem~B, it remains open whether $(2-3-13)$ can occur as the GK-graph of a solvable character-quadratic or semi-rational finite group.

\begin{table}
	\setlength{\tabcolsep}{1mm} 
	\def\arraystretch{0.75} 
	\centering
	\begin{tabular}{|c|c|c|c|c|}
		
		\hline
		\begin{tikzpicture}
			
			\draw[fill=black] (0.5,0) circle (2pt);
			
			\node at (0.5,0.5) {$2$};
			
		\end{tikzpicture}

		& \begin{tikzpicture}
			\draw[fill=black] (1/2,0) circle (2pt);
			
			\node at (0.5,0.5) {$3$};
			
		\end{tikzpicture}

		&  \begin{tikzpicture}
			\draw[fill=black] (1/2,0) circle (2pt);
			\draw[fill=black] (3/2,0) circle (2pt);
			
			\node at (1/2,0.5) {$2$};
			\node at (1.5,0.5) {$3$};
		\end{tikzpicture}

		&  \begin{tikzpicture}
			\draw[fill=black] (0.5,0) circle (2pt);
			\draw[fill=black] (3/2,0) circle (2pt);
			
			\node at (0.5,0.5) {$2$};
			\node at (1.5,0.5) {$3$};
			\draw[thick] (0.5,0) -- (3/2,0);
		\end{tikzpicture}
		
		&  \begin{tikzpicture}
			\draw[fill=black] (0.5,0) circle (2pt);
			\draw[fill=black] (3/2,0) circle (2pt);
			
			\node at (0.5,0.5) {$2$};
			\node at (1.5,0.5) {$5$};
		\end{tikzpicture}
		
		\\\quad\quad (a)\quad\quad\quad & \quad\quad (b)\quad\quad\quad &\quad\quad (c) \quad\quad\quad& \quad\quad (d) \quad\quad\quad& \quad\quad (e)\quad\quad\quad
		\\ \hline

		\begin{tikzpicture}
			\draw[fill=black] (0.5,1.3) circle (2pt);
			\draw[fill=black] (3/2,1.3) circle (2pt);
			
			\node at (0.5,1) {$2$};
			\node at (1.5,1) {$5$};
			\draw[thick] (0.5,1.3) -- (3/2,1.3);
		\end{tikzpicture}

		&\begin{tikzpicture}
			\draw[fill=black] (0.5,1.3) circle (2pt);
			\draw[fill=black] (3/2,1.3) circle (2pt);
			
			\node at (0.5,1) {$3$};
			\node at (1.5,1) {$7$};
		\end{tikzpicture}

		&\begin{tikzpicture}
			\draw[fill=black] (0,0) circle (2pt);
			\draw[fill=black] (0.7,0) circle (2pt);
			\draw[fill=black] (0,-0.7) circle (2pt);
			\node at (0,0.5) {$2$};
			\node at (0.7,0.5) {$3$};
			\node at (0,-1) {$5$};
			\draw[thick] (0.7,0) -- (0,0);
		\end{tikzpicture}
		
		&\begin{tikzpicture}
			\draw[fill=black] (0,0) circle (2pt);
			\draw[fill=black] (0.7,0) circle (2pt);
			\draw[fill=black] (0,-0.7) circle (2pt);
			\node at (0,0.5) {$2$};
			\node at (0.7,0.5) {$3$};
			\node at (0,-1) {$5$};
			\draw[thick]  (0,-0.7) -- (0,0);
		\end{tikzpicture}
		
		&\begin{tikzpicture}
			\draw[fill=black] (0,0) circle (2pt);
			\draw[fill=black] (0.7,0) circle (2pt);
			\draw[fill=black] (0,-0.7) circle (2pt);
			\node at (0,0.5) {$2$};
			\node at (0.7,0.5) {$3$};
			\node at (0,-1) {$5$};
			\draw[thick] (0,-0.7) -- (0,0) -- (0.7,0);
		\end{tikzpicture}
		
		\\ (f) &  (g) &  (h) &  (i) & (j)
		\\ \hline

		\begin{tikzpicture}
			\draw[fill=black] (0,0) circle (2pt);
			\draw[fill=black] (0.7,0) circle (2pt);
			\draw[fill=black] (0,-0.7) circle (2pt);
			\node at (0,0.5) {$2$};
			\node at (0.7,0.5) {$3$};
			\node at (0,-1) {$5$};
			\draw[thick] (0,0) -- (0.7,0) -- (0, -0.7);
		\end{tikzpicture}

		&\begin{tikzpicture}
			\draw[fill=black] (0,0) circle (2pt);
			\draw[fill=black] (0.7,0) circle (2pt);
			\draw[fill=black] (0,-0.7) circle (2pt);
			\node at (0,0.5) {$2$};
			\node at (0.7,0.5) {$3$};
			\node at (0,-1) {$5$};
			\draw[thick]  (0,0) -- (0.7,0) -- (0,-0.7) -- (0,0);
		\end{tikzpicture}
		
		&\begin{tikzpicture}
			\draw[fill=black] (0,0) circle (2pt);
			\draw[fill=black] (0.7,0) circle (2pt);
			\draw[fill=black] (0,-0.7) circle (2pt);
			\node at (0,0.5) {$2$};
			\node at (0.7,0.5) {$3$};
			\node at (0,-1) {$7$};
			\draw[thick]  (0,0) -- (0.7,0);
		\end{tikzpicture}

		&\begin{tikzpicture}
			\draw[fill=black] (0,0) circle (2pt);
			\draw[fill=black] (0.7,0) circle (2pt);
			\draw[fill=black] (0,-0.7) circle (2pt);
			\node at (0,0.5) {$2$};
			\node at (0.7,0.5) {$3$};
			\node at (0,-1) {$7$};
			\draw[thick]  (0,-0.7) -- (0,0) -- (0.7,0);
			
		\end{tikzpicture}

		&\begin{tikzpicture}
			\draw[fill=black] (0,0) circle (2pt);
			\draw[fill=black] (0.7,0) circle (2pt);
			\draw[fill=black] (0,-0.7) circle (2pt);
			\node at (0,0.5) {$2$};
			\node at (0.7,0.5) {$3$};
			\node at (0,-1) {$7$};
			\draw[thick]  (0,0) -- (0.7,0) -- (0,-0.7);
		\end{tikzpicture}

		\\  (k) &  (l) &  (m) &  (n) & (o) 
		\\ \hline

		\begin{tikzpicture}
			\draw[fill=black] (0,0) circle (2pt);
			\draw[fill=black] (0.7,0) circle (2pt);
			\draw[fill=black] (0,-0.7) circle (2pt);
			\node at (0,0.5) {$2$};
			\node at (0.7,0.5) {$3$};
			\node at (0,-1) {$7$};
			\draw[thick]  (0,0) -- (0.7,0) -- (0,-0.7) -- (0,0);
		\end{tikzpicture}
		
		&\begin{tikzpicture}
			\draw[fill=black] (0,0) circle (2pt);
			\draw[fill=black] (0.7,0) circle (2pt);
			\draw[fill=black] (0,-0.7) circle (2pt);
			\node at (0,0.5) {$2$};
			\node at (0.7,0.5) {$3$};
			\node at (0,-1) {$13$};
			\draw[thick] (0,0) -- (0.7,0) ;
		\end{tikzpicture}

		&\begin{tikzpicture}
			\draw[fill=black] (0,0) circle (2pt);
			\draw[fill=black] (0.7,0) circle (2pt);
			\draw[fill=black] (0,-0.7) circle (2pt);
			\node at (0,0.5) {$2$};
			\node at (0.7,0.5) {$3$};
			\node at (0,-1) {$13$};
			\draw[thick] (0,-0.7) -- (0,0) -- (0.7,0);
		\end{tikzpicture}

		&\begin{tikzpicture}
			\draw[fill=black] (0,0) circle (2pt);
			\draw[fill=black] (0.7,0) circle (2pt);
			\draw[fill=black] (0,-0.7) circle (2pt);
			\node at (0,0.5) {$2$};
			\node at (0.7,0.5) {$3$};
			\node at (0,-1) {$13$};
			\draw[thick]  (0,0) -- (0.7,0) -- (0,-0.7);
		\end{tikzpicture}

		&
		\begin{tikzpicture}
			\draw[fill=black] (0,0) circle (2pt);
			\draw[fill=black] (0.7,0) circle (2pt);
			\draw[fill=black] (0,-0.7) circle (2pt);
			\node at (0,0.5) {$2$};
			\node at (0.7,0.5) {$3$};
			\node at (0,-1) {$13$};
			\draw[thick] (0,0) -- (0.7,0) -- (0,-0.7) -- (0,0) ;
		\end{tikzpicture}

		\\ (p)&  (q) &  (r) &  (s) &  (t) 
		\\ \hline

	\end{tabular}
	
	\bigskip
	\caption{\label{Atmost3} GK-graphs of character-quadratic (or semi-rational of order coprime to $17$) nontrivial solvable groups with at most three vertices; no example known for (s)}
\end{table}

Finally, moving to graph with four vertices, we prove the following.

\begin{maintheorem}
Let $\Gamma$ be a graph having four vertices. If $\Gamma$ is the GK-graph of a solvable character-quadratic or semi-rational group, then $\Gamma$ has at least four edges.
\end{maintheorem}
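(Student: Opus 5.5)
Here is the plan. Let $G$ be solvable, character-quadratic or semi-rational, with $\Gamma=\ngk G$ having four vertices. By \cite{CD,tentquadraticrat} the vertex set is a $4$-subset of $\{2,3,5,7,13,17\}$. It contains $2$, since a nontrivial group of odd order has nonreal characters, and these are excluded in our setting except possibly for $\sqrt{-3}$-type values; I will use the standard fact that such groups have even order once more than one prime is involved. The result of \cite{GKLNS} says that the complement $\bar\Gamma$ of the GK-graph of a solvable group is triangle-free and $3$-colourable. A graph on four vertices with at most three edges has a complement with at least three edges, so triangle-freeness of $\bar\Gamma$ already constrains things. The target is therefore to show that $\bar\Gamma$ has at most two edges, i.e.\ that $\Gamma$ has at least $4$ edges out of $6$.

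First I dispose of the disconnected case. By Gruenberg--Kegel, $G$ is then Frobenius or $2$-Frobenius, and \Cref{Disconected} together with Theorem~A and \cite{PV} shows that disconnected graphs have at most three vertices. So $\Gamma$ is connected, hence has at least $3$ edges, and we must exclude exactly three edges. Then $\Gamma$ is a tree on four vertices, i.e.\ a path $P_4$ or a star $K_{1,3}$. The complement of $K_{1,3}$ is a triangle plus an isolated vertex, which is excluded by \cite{GKLNS}. So the only case left is $\Gamma=P_4$, say $a-b-c-d$, whose complement is again a $P_4$, namely $b-d-a-c$. This complement is triangle-free and $3$-colourable, so it is not excluded combinatorially, and the real work is to rule out $P_4$.

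To rule out $P_4$ I would use the structure theory behind \cite{GKLNS} (Lucido / Williams). Write $F=\fit G$ and consider $\bar G=G/F$ acting on $F/\frat F$. In a path $a-b-c-d$, the endpoints $a,d$ are nonadjacent to $c$ and $b$ respectively, and nonadjacent to each other. For nonadjacent primes $p,q$ in a solvable group, a Hall $\{p,q\}$-subgroup is Frobenius or $2$-Frobenius, so one of $p,q$ does not divide $|F|$ in a suitable section. Chasing the three nonadjacent pairs $\{a,c\},\{b,d\},\{a,d\}$ gives a section $H$ of $G$ which is a $2$-Frobenius or Frobenius group on primes $\{a,c,d\}$ or $\{a,b,d\}$, with a fixed-point-free action of a cyclic group of prime order $r$ (one of $c,d,\dots$) on an elementary abelian $p$-group. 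The quadratic/semi-rational hypothesis passes to the relevant quotients; for semi-rationality I would use the Galois-action characterization in terms of $\norm G{\langle x\rangle}/\cent G{x}$ having index at most $2$ in $\aut{\langle x\rangle}$. From this I get the arithmetic constraints of Theorem~A and \cite{PV}: the Frobenius complement acting on a $p$-group forces $r-1$ to be of the form $|\norm G{\langle x\rangle}/\cent G x|\cdot\{1,2\}$, which pins down pairs such as $(5,2),(7,3),(13,3),(17,2)$. Running through the possible labelings of $P_4$ by primes in $\{2,3,5,7,13,17\}$ then leads to a contradiction in each case. The recurring mechanisms are that a $13$ or $7$ must be acted on by an element of order $3$ or $6$ but has no neighbour among the primes that can centralize it, and that $5$ requires a $4$-element normalizer that is too large.

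The main obstacle is this last step: the case analysis of the labelled $P_4$'s, especially those involving $17$ in the semi-rational case. There no classification is available, so I would try to use only local information, namely that an element of order $17$ must have $|\norm G{\langle x\rangle}/\cent G x|\ge 8$, hence a $2$-element of order $8$ or $16$ acting on it, which forces extra adjacencies.
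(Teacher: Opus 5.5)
Your reduction to the case where $\Gamma$ is a path is correct and is essentially step (1) of the paper's proof. Table~\ref{Disconected} excludes the disconnected case, and triangle-freeness of the complement (equivalently, \Cref{silviaproposition}) excludes the star. Your justification that $2\in\pi(G)$ is wrong, though. The ``standard fact'' fails for $C_7\rtimes C_3$, which is character-quadratic and semi-rational, has odd order, and has character values generating $\Q(\sqrt{-7})$. Use \Cref{VerticesAndEdges}(a),(b) instead: a $4$-subset of $\{2,3,5,7,13,17\}$ avoiding $2$ contains a prime congruent to $1$ modulo $4$, and one avoiding $3$ contains $7$ or $13$. Parts (c),(d) then force the path to be $p-2-3-q$ with $p,q\in\{5,7,13,17\}$.

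The genuine gap is the exclusion of this path, which is where essentially all the work lies. Two of your tools are not available as stated:
\begin{itemize}
\item The hypotheses pass to quotients but not to Hall subgroups, so Theorem~A and \cite{PV} cannot be applied to the Frobenius or $2$-Frobenius sections you extract unless they are quotients.
\item In the character-quadratic case an element of order $15$ need not be semi-rational (\Cref{semiratpel} does not cover it), so normalizer counting is unavailable there.
\end{itemize}
More seriously, your claim that constraints on $|\bg Gx|$ give a contradiction for every labelling is false for $7-2-3-5$. Let $S=D\times(\GEN a\rtimes\GEN b)\cong Q_8\times(C_7\rtimes C_3)$, which is character-quadratic and semi-rational. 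Let $\psi\in\irr S$ be faithful (so $\psi(1)=6$), and let $V$ be the irreducible $\F_5S$-module with Brauer character $\psi+\overline\psi$. Then $G=V\rtimes S$ has the following properties:
\begin{itemize}
\item its GK-graph is $7-2-3-5$ and its Sylow subgroups obey \Cref{Sylow};
\item elements of order $7$ have $|\bg Gx|=3$;
\item elements of order $5$ are inverted by the central involution of $D$;
\item elements of order $15$ are, up to conjugacy, of the form $wb$ with $w\in W=\cent Vb$; here $\GEN b\times D$ acts on $W$ and $D$ contains elements of order $4$.
\end{itemize}
None of your mechanisms rules this group out.

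The paper reaches exactly this configuration (or its variant $C_7\rtimes(C_3\times Q_8)$) only after substantial work. It uses a minimal-counterexample reduction making $\fit G$ the unique minimal normal subgroup and a Sylow subgroup, Frobenius-complement theory, and several \texttt{GAP} computations (excluding ${\rm SL}_2(3)$-type Hall $\{2,3\}$-subgroups, $p=13$, and $D\cong C_4,Q_{16}$). It then kills the configuration by a global module argument:
\begin{itemize}
\item Since $\Q(\psi)=\Q(\sqrt{\pm7})$ and $\pm2$ are non-squares in $\F_5$, $\psi$ is not realizable over $\F_5$, so $\dim_{\F_5}V=12$.
\item Since $\psi(b)=0$, $\dim W=4$, so $WD\cong C_5^4\rtimes Q_8$ is a non-rational Frobenius group by \cite{PV}.
\item A non-rational $w\in W$ makes $wb$ non-semi-rational, because no element inverts $b$.
\item In the character-quadratic case, inducing a suitable extension $\theta\in\irr{V\GEN b}$ gives $\theta^G(b)=8\theta(b)$ and $\theta^G(wb)=\eta(w)\theta(b)$. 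Here $\eta$ is a character of $VD$ inflated from $WD$ with $\eta(w)\in\Q_5\setminus\Q$, so $[\Q(\theta^G):\Q]\ge 4$.
\end{itemize}
This is delicate. Were $V$ $6$-dimensional, $W$ would be $2$-dimensional, and $C_5^2\rtimes Q_8$ is rational, so the argument would give nothing. This module-level arithmetic is the missing idea, and local counting of normalizers cannot substitute for it.
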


The paper is organized as follows. In \Cref{Preliminaries} we introduce the basic notation and terminology, recall some known result and prove some elementary properties of elements in character-quadratic or semi-rational groups. 
\Cref{Section2Frobenius} is dedicated to the classification of the 2-Frobenius character-quadratic or semi-rational groups (Theorem~A).
The disconnected graphs arising as GK-graph of a solvable character-quadratic or semi-rational groups are classified in \Cref{SectionDisconnected}.
Then we prove Theorem~B in  \Cref{SectionThreeVertices}, and Theorem~C in \Cref{SectionFourVertices}.
	
\section{Preliminary results and notation}\label{Preliminaries}
	
Throughout the paper, we will use the following standard notation for $G$ a group, $H$ a subgroup of $G$, $X$ a subset of $G$, $x\in G$, $n$ a positive integer:
\begin{align*}
\norm{G}{H}, & \text{ the normalizer of } H \text{ in } G; \\
\cent{G}{X}, & \text{ the centralizer of } X \text{ in } G; \\
\bg G x &= \norm G {\langle x \rangle}/\cent G x; \\
\fit G, & \text{ the Fitting subgroup of } G; \\
{\bf F}_n(G), & \text{ the } n\text{th Fitting subgroup of } G; \\
\Irr(G), & \text{ the set of complex irreducible characters of } G;\\
\Q_n, & \text{ the cyclotomic  extension of $\Q$ obtained by adjoining a primitive $n$th root of unity}.
\end{align*}


Recall that $G$ is said to be  a \emph{$2$-Frobenius}  group if it has two normal subgroups $N$ and $K$ such that $K$ is a Frobenius  group with kernel $N$, and $G/N$ is a Frobenius  group with kernel $K/N$. 
Note that, in this situation, the orders of $N$ and $K/N$ are coprime, as well as the orders of $K/N$ and $G/K$. Moreover, $N=\fit G$ and $K=\fitd G$.
In the next lemma, we gather some structural features of $2$-Frobenius groups that will be relevant for our discussion.

\begin{lemma}\label{2FrobStructure}
Let $G$ be a $2$-Frobenius group. Then the following properties hold.
\begin{enumeratei}
\item $G/\fitd G$ is a cyclic group, $\fitd G/\fit G$ is a cyclic group of odd order, and $\fit G$ is not cyclic.
\item $\fit G$ has a complement in $G$.
\item If $L$ is a normal subgroup of $G$ properly contained in $\fit G$, then $G/L$ is a $2$-Frobenius group whose Fitting subgroup is $\fit G/L$ and whose second Fitting subgroup is $\fitd G/L$.
\end{enumeratei}
\end{lemma}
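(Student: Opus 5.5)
Throughout, let $N=\fit G$ and $K=\fitd G$, so $K$ is Frobenius with kernel $N$ and $G/N$ is Frobenius with kernel $K/N$.

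\emph{Part (a).} Since $G/N$ is Frobenius with kernel $K/N$, the quotient $G/K$ is isomorphic to a Frobenius complement acting on $K/N$. The group $K/N$ is nilpotent, and $K/N$ is also isomorphic to a Frobenius complement of $K$ acting on $N$. A nilpotent Frobenius complement has cyclic Sylow $p$-subgroups for odd $p$, and generalized quaternion or cyclic Sylow $2$-subgroups. If $K/N$ had even order, its Sylow $2$-subgroup would be normal in $G/N$, so $G/K$ would act fixed-point-freely on it. The Sylow $2$-subgroup of $K/N$ is cyclic or generalized quaternion, and such groups have only $2$-group automorphisms acting on them in a way incompatible with a fixed-point-free action. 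More concretely, a fixed-point-free automorphism of odd order does occur on $Q_8$. However, the complement $G/K$ acting on a cyclic or quaternion group cannot be fixed-point-free unless one is in the $Q_8$ case. That case is excluded because $Q_8\rtimes C_3\cong \mathrm{SL}(2,3)$ is itself a Frobenius complement, contradicting the requirement that $K/N$ be a kernel. The cleaner route is the standard fact that a Frobenius kernel which is also a Frobenius complement must be cyclic of odd order: a kernel is nilpotent, and a complement with a normal quaternion $2$-part admits no fixed-point-free automorphism from the rest. So $K/N$ is cyclic. Its order is odd because $N$ is a nontrivial Frobenius kernel, and $|K/N|$ divides $|N|-1$; more decisively, the kernel $K/N$ of the Frobenius group $G/N$ admits a fixed-point-free automorphism, while a cyclic $2$-group has automorphism group a $2$-group.

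To see that $G/K$ is cyclic, note that $G/K$ acts faithfully on the cyclic group $K/N$, so it embeds in $\aut{K/N}$, which is abelian. An abelian Frobenius complement is cyclic.

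Finally, suppose $N$ were cyclic. Then $G/\cent{G}{N}$ would be abelian. Since $\cent{G}{N}\le N$ for solvable $G$, the quotient $G/N$ would be abelian, contradicting that $G/N$ is Frobenius.

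\emph{Part (b).} The orders $|N|$ and $|G/N|$ need not be coprime, since $|G/K|$ may share primes with $|N|$, so Schur--Zassenhaus cannot be applied directly. Instead, take $H$ to be a Frobenius complement of $N$ in $K$, so $|H|=|K/N|$. By Frattini, $G=N\norm G H$. Then $\norm N H=\cent N H=1$, since $[\norm N H,H]\le N\cap H=1$ and $H$ acts fixed-point-freely on $N$. Hence $\norm G H$ is a complement to $N$.

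\emph{Part (c).} Since $L$ is normal in $G$ and properly contained in $N$, the quotient $K/L$ is Frobenius with kernel $N/L$: Frobenius quotients by normal subgroups of the kernel stay Frobenius when the kernel image is nontrivial. Also $(G/L)/(N/L)\cong G/N$ is Frobenius with kernel $K/L\,/\,N/L$. So $G/L$ is $2$-Frobenius, and the identifications of $\fit{G/L}$ and $\fitd{G/L}$ follow from the remark preceding the lemma.

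The main obstacle is the cyclicity and odd order of $K/N$ in part (a). There I would cite the known result (Gruenberg--Kegel, or \cite{W}) rather than reprove the quaternion exclusion in detail.
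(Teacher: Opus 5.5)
Your parts (b) and (c) are essentially the paper's proof. For (b) the paper also takes a Frobenius complement $W$ of $\fitd G$, applies the Frattini argument to get $G=\fit G\,\norm G W$, and notes $\fit G\cap\norm G W=1$; you add the justification $[\norm N H,H]\le N\cap H=1$. The paper calls (c) straightforward. For (a) the paper just cites Lemma~2 of \cite{ZM}, so your fallback to a citation is acceptable. Two of your deductions in (a) are also correct: $G/K$ is cyclic because it embeds in the abelian group $\aut{K/N}$, and $N$ is not cyclic via $\cent G N\le N$. The solvability this uses follows from $N$ nilpotent and $K/N$, $G/K$ cyclic, which you prove first, so there is no circularity.

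However, do not keep the sketch you give for the key claim that $K/N$ is cyclic of odd order; it rests on false statements.
\begin{itemize}
\item $Q_8$ admits no fixed-point-free automorphism at all. The order-$3$ automorphism fixes $-1$, which is why $\mathrm{SL}(2,3)$ has elements of order $6$ and is not a Frobenius group.
\item The claim that a complement can act fixed-point-freely on a cyclic or quaternion group ``in the $Q_8$ case'' is therefore wrong.
\item Appealing to $\mathrm{SL}(2,3)$ being a Frobenius complement does not exclude anything. It would also leave larger generalized quaternion groups and other complements untreated.
\item ``$|K/N|$ divides $|N|-1$'' says nothing about parity.
\item The remark about automorphisms of cyclic $2$-groups must be applied to the Sylow $2$-subgroup of $K/N$, not to $K/N$ itself.
\end{itemize}

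The missing idea is one line. The Sylow $2$-subgroup $P$ of the nilpotent group $K/N$ is characteristic, hence normal in $G/N$, so a nontrivial Frobenius complement of $G/N$ (isomorphic to $G/K$) acts fixed-point-freely on $P$. But $P$ is a Sylow subgroup of the Frobenius complement $K/N$ of $K$, so it is cyclic or generalized quaternion. If $P\neq 1$ it therefore has a unique involution, which every automorphism fixes. Hence $P=1$ and $|K/N|$ is odd. Then all Sylow subgroups of $K/N$ are cyclic, so the nilpotent group $K/N$ is cyclic. With this in place of your quaternion discussion, your proof of (a) becomes self-contained.
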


\begin{proof} Property (a) is Lemma~2 of \cite{ZM}. As for (b), denote by $W$ a Frobenius complement of $\fitd G$ (which is a Hall subgroup of $\fitd G$ and $G$); the Frattini argument yields $G=\fitd G\norm G W=\fit G\norm G W$, and clearly $\fit G\cap\norm G W$ is trivial. Property (c) is straightforward.
 \end{proof}

As an immediate consequence of (a) in the above lemma, a $2$-Frobenius group is always a solvable group. 

The next result is also useful when dealing with $2$-Frobenius groups. 

\begin{lemma}[\cite{MW}, Lemma~0.34] \label{dimensions}
Let $G$ be a Frobenius group with kernel $K$ and complement $H$, and let $\F$ be a field whose characteristic does not divide $|K|$. Assume that $V$ is a finite-dimensional $\F G$-module. If $J$ is a subgroup of $H$, then $\dim_{\F}\cent V J=|H:J|\dim_{\F}\cent V H$. In particular, we have $\dim_{\F}V=|H|\dim_{\F}\cent V H$.
\end{lemma}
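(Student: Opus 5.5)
The statement is Lemma~0.34 of \cite{MW}. I would prove it by a character (Brauer-type) computation when $\F$ is large enough, and reduce to that case by a field extension.

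\emph{Reduction.} First I extend scalars to the algebraic closure $\overline{\F}$. For any subgroup $J$, the fixed space $\cent V J$ is the kernel of the linear map $V\to V^{|J|}$, $v\mapsto (vj-v)_{j\in J}$. Kernel dimensions are unchanged by field extension, so $\dim_{\F}\cent V J=\dim_{\overline{\F}}\cent{V\otimes\overline{\F}}J$. Hence we may assume $\F$ is algebraically closed.

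\emph{Main step: restriction to the kernel.} Since $\mathrm{char}\,\F\nmid |K|$, Maschke's theorem makes $V_K$ semisimple. Write $V_K=\bigoplus_{\lambda} V_\lambda$ over the irreducible $\F K$-modules $\lambda$, where $V_\lambda$ is the $\lambda$-homogeneous component. The group $H$ permutes these components according to its action on $\irr K$-type modules.

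The key Frobenius fact is the following: $H$ fixes no nontrivial irreducible module of $K$, and every nonidentity element of $H$ fixes only the trivial one. This holds because $h\neq 1$ fixes no nonidentity conjugacy class of $K$, and in coprime characteristic Brauer's permutation lemma applies (using Brauer characters if $\mathrm{char}\,\F>0$, which is legitimate since $K$ is a $p'$-group). Consequently $H$ acts semiregularly on the nontrivial components.

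Now decompose $V=\cent V K\oplus W$, where $W$ is the sum of the nontrivial components. Both summands are $H$-stable, since $\cent V K$ is the trivial component.

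\emph{Counting fixed points.}
\begin{itemize}
\item On $W$: the nontrivial components fall into regular $H$-orbits. Each orbit sum is isomorphic, as an $\F J$-module, to a direct sum of $|H:J|$ copies of a module induced from the trivial subgroup of $J$ (equivalently, each regular $H$-orbit splits into $|H:J|$ regular $J$-orbits). A module induced from the trivial subgroup of $J$, say $U\otimes \F J$ with $U$ one component, satisfies $\dim\cent{(U\otimes \F J)}J=\dim U$. Therefore $\dim\cent W J=|H:J|\dim\cent W H$.
\item On $\cent V K$: here $G$ acts through $G/K\cong H$. The statement as written then requires $\dim\cent{\cent V K}{J}=|H:J|\dim\cent{\cent V K}{H}$. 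This holds when $J=1$ only if the action is regular, which is false in general. For example, the trivial module gives $1=|H|\cdot 1$, which fails for $H\neq 1$.
\end{itemize}

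So the lemma must carry the implicit hypothesis of \cite{MW} that $\cent V K=0$, i.e.\ that $K$ acts fixed-point-freely on $V$. This is exactly the situation in which it will be applied, with $V$ a chief factor of $\fit G$ in a $2$-Frobenius group. I would state this hypothesis explicitly and conclude from the first bullet alone. The "in particular" clause is then the case $J=1$.

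The main obstacle is the semiregularity claim in positive characteristic, which requires the $p'$-Brauer permutation lemma. An alternative is a direct orbit argument: if $h\neq 1$ stabilized a nontrivial irreducible $K$-module $U$, then $U$ would extend to $K\langle h\rangle$, and one derives a contradiction with $\cent K h=1$ via Glauberman's correspondence.
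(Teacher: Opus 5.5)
Your counterexample is right: as printed the lemma is false, since for the trivial module and $J=1$ it reads $1=|H|$. The hypothesis $\cent VK=0$ is genuinely needed, and your proof of the statement with that hypothesis added is complete and correct.

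The paper gives no argument, only the citation to \cite{MW}, so your route is necessarily your own. You pass to $\overline{\F}$, decompose $V_K$ into homogeneous components, and use Brauer's permutation lemma (each $1\ne h\in H$ fixes only the trivial class of $K$, because $\cent Gx\le K$ for $1\ne x\in K$) to see that $H$ permutes the nontrivial components semiregularly. You then count fixed points on the resulting free module. This actually proves the stronger fact that $V_H$ is a free $\F H$-module. It also needs no assumption relating $\mathrm{char}\,\F$ to $|H|$, because the fixed points of $J$ on $\F J$ are spanned by the sum of the elements of $J$ in every characteristic.

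Two small remarks.

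First, the positive-characteristic step needs no special ``$p'$-version'' of Brauer's lemma. Since $\mathrm{char}\,\F\nmid|K|$, Brauer characters give an $H$-equivariant bijection between isomorphism classes of irreducible $\overline{\F}K$-modules and $\irr K$, so the ordinary lemma applies. The Glauberman alternative also works, since $|H|$ and $|K|$ are coprime and $\cent Kh=1$, but it is unnecessary.

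Second, the missing hypothesis holds in both places where the paper uses the lemma, not only in the $2$-Frobenius one.
\begin{itemize}
\item In \Cref{FieldsOfValues2Frob} it holds because $FK$ is a Frobenius group with kernel $F$.
\item In \Cref{silviaproposition}, $\cent VK$ is normalized by $U$ (as $K\trianglelefteq U$) and by the abelian group $V$, so it is normal in $G$. By minimality it is $1$ or $V$, and $\cent VK=V$ would give an element of order $pq$.
\end{itemize}
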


As mentioned in the introduction, the Gruenberg-Kegel graph $\ngk G$ of a group $G$ (GK-graph for short) is the simple undirected graph whose vertex set is the set $\pi(G)$ of all prime divisors of $|G|$, and where two distinct vertices $p$ and $q$ are adjacent if and only if $G$ has an element of order $pq$. It is immediate to verify that the GK-graph is ``well behaved'' with respect to subgroups and factor groups: if $H$ is a subgroup of $G$ and $N$ is a normal subgroup of $G$, then both $\ngk H$ and $\ngk{G/N}$ are subgraphs of $\ngk G$.

The following result, due to K. Gruenberg and O. Kegel, appears as a corollary of \cite[Theorem~A]{W}.
 
\begin{theorem}\label{2FrobGK} Let $G$ be a   solvable group. Then $\ngk G$ is disconnected if and only if $G$ is either a Frobenius group or a $2$-Frobenius group. In this case, $\ngk G$ has two connected components.
\end{theorem}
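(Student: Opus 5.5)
We treat the two implications separately. The ``if'' direction together with the count of components is elementary. The ``only if'' direction is the real content of the Gruenberg--Kegel theorem, and in the paper we simply cite \cite[Theorem~A]{W} for it; here is how we would argue.

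\emph{Frobenius case.} Let $G$ be Frobenius with kernel $K$ and complement $H$.
\begin{itemize}
\item The centralizer of every nontrivial element of $K$ lies in $K$. Hence no element has order $pq$ with $p\in\pi(K)$ and $q\in\pi(H)$, so $\pi(K)$ and $\pi(H)$ lie in different components.
\item By Thompson's theorem $K$ is nilpotent, so $\pi(K)$ is a clique.
\item For $\pi(H)$, work with $\fit H$ (here $H$ is solvable because $G$ is). The set $\pi(\fit H)$ is a clique.
\item Let $q\in\pi(H)\setminus\pi(\fit H)$ and let $y\in H$ have order $q$. Then $y$ normalizes some subgroup $P\le \fit H$ of prime order $p$. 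In a Frobenius complement every subgroup of order $pq$ is cyclic, so $\langle P,y\rangle$ is cyclic and $q$ is adjacent to $p$.
\end{itemize}
So $\pi(H)$ is connected and $\ngk G$ has exactly two components.

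\emph{$2$-Frobenius case.} Let $N=\fit G$ and $K=\fitd G$, and use \Cref{2FrobStructure}. We claim the components are $\pi(K/N)$ and $\pi(N)\cup\pi(G/K)$.
\begin{itemize}
\item $\pi(K/N)$ is a clique because $K/N$ is cyclic.
\item No prime of $K/N$ is adjacent to a prime of $N$, since $K$ is Frobenius with kernel $N$.
\item No prime of $K/N$ is adjacent to a prime of $G/K$. Such an element would map to an element of order $pq$ in the Frobenius group $G/N$ (since $p\nmid |N|$), which is impossible.
\item $\pi(N)$ is a clique. Let $r\in\pi(G/K)\setminus\pi(N)$ and let $x$ be an $r$-element of a complement of $N$ (\Cref{2FrobStructure}(b)), with $\langle xN\rangle$ of order $r$ inside a Frobenius complement $H$ of $G/N$.
\item Apply \Cref{dimensions} to the Frobenius group $(K/N)H$ acting on a nonzero chief factor $V$ of $G$ below $N$. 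The characteristic of $V$ does not divide $|K/N|$, so $\dim C_V(\langle xN\rangle)=|H:\langle xN\rangle|\dim C_V(H)$. Also $\dim C_V(H)>0$, since otherwise $\dim V=|H|\dim C_V(H)=0$.
\item Since $x$ acts coprimely on $N$, fixed points on the section lift to fixed points in $N$. This gives an element of order $rp$, so $r$ is joined to $\pi(N)$.
\end{itemize}
Thus there are exactly two components.

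\emph{Converse.} Assume $\ngk G$ is disconnected and $G$ is solvable; we argue by induction on $|G|$.
\begin{itemize}
\item Let $\pi_1$ be the component containing $\pi(\fit G)$, which is possible because $\fit G$ is nilpotent, and let $\pi_2=\pi(G)\setminus\pi_1$.
\item Let $H$ be a Hall $\pi_2$-subgroup. No element of $H$ commutes with a nontrivial element of $\fit G$, so $H$ acts fixed-point-freely on $\fit G$. Since $\cent G{\fit G}\le \fit G$, this forces every element of $H$ to have the Frobenius-complement property on $\fit G$.
\item Next examine $\fit G H$ and the image of $G$ in $G/\fit G$. Either $G=\fit G\rtimes H$ up to a $\pi_1$-part acting compatibly, which gives a Frobenius group, or $G/\fit G$ is itself Frobenius with kernel $\fitd G/\fit G$ a $\pi_2$-group. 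In the latter case we obtain the $2$-Frobenius structure.
\end{itemize}

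The main obstacle is this last dichotomy. One must show that the $\pi_1$-primes outside $\pi(\fit G)$ cannot sit ``between'' $\fit G$ and $H$ except as the top cyclic factor $G/\fitd G$. We would first pass to $G/\frat{\fit G}$ and use the fixed-point-free action of $H$ to show that $H$ is cyclic or a Frobenius complement. Then we would apply the induction hypothesis to $G/\fit G$.
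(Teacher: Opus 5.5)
The paper gives no proof of this statement. It quotes it as the Gruenberg--Kegel theorem, a consequence of \cite[Theorem~A]{W}. Your verification of the ``if'' direction and of the number of components is essentially sound and goes beyond the paper, but it needs two small repairs.
\begin{itemize}
\item The formula in \Cref{dimensions} needs the kernel to have no nonzero fixed points on $V$; it fails for a trivial module. Here $\cent V{K/N}=0$ does hold: a complement of $N$ in $K$ acts fixed-point-freely and coprimely on $N$, so any fixed point on $V$ would lift to $N$.
\item In your second bullet of the $2$-Frobenius case, the $p$-part of an element of order $pq$ need not lie in $N$. First use that $G/N$ is Frobenius with kernel $K/N$ to force it into $N$. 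In the third bullet the $q$-part may fall into $N$; that case reduces to the second bullet.
\end{itemize}

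The genuine gap is the converse, which is the real content of the theorem. Your sketch asserts the decisive dichotomy but does not prove it, and the route you indicate cannot work as stated.
\begin{itemize}
\item Applying the induction hypothesis to $G/\fit G$ requires $\ngk{G/\fit G}$ to be disconnected, and this fails in general. For the Frobenius group $C_7\rtimes C_6$ the quotient by the Fitting subgroup is $C_6$, whose graph is connected. You also cannot know you are in the $2$-Frobenius case before proving it.
\item Your first alternative is misstated. If some $\pi_1$-part lies outside $\fit G$, then $G$ is not Frobenius with kernel $\fit G$.
\item That a Hall $\pi_2$-subgroup is a Frobenius complement is automatic from its fixed-point-free action on $\fit G$. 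It says nothing about where the remaining $\pi_1$-primes sit.
\end{itemize}

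One way to close the gap is the following.
\begin{itemize}
\item Put $N=\oh{\pi_1}{G}$ and $M/N=\oh{\pi_2}{G/N}$; here $M\neq N$ because $\oh{\pi_1}{G/N}=1$. A Hall $\pi_2$-subgroup of $M$ acts fixed-point-freely on $N$, so $M$ is Frobenius with kernel $N$, and hence $N=\fit G$. If $G=M$, this is the Frobenius case.
\item Every nontrivial $\pi_1$-element $gN$ of $G/N$ acts fixed-point-freely on $M/N$. Suppose $gN$ centralizes $xN$ with $x$ of prime order $q\in\pi_2$, and let $L=N\langle x,g\rangle$. The Frattini argument gives $L=N\norm L{\langle x\rangle}$, and $\norm L{\langle x\rangle}\cap N=\cent Nx=1$. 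Hence $\norm L{\langle x\rangle}\cong L/N$ is abelian, so $x$ commutes with a nontrivial $\pi_1$-element, which is impossible.
\item If $G\neq M$, then $\oh{\pi_2}{G/M}=1$ by the choice of $M$, so such $\pi_1$-elements exist. Thus $M/N$ is a Frobenius kernel, hence nilpotent, and also a Frobenius complement. A generalized quaternion Sylow $2$-subgroup is impossible, since its unique involution would be fixed. So $M/N$ is cyclic.
\item Since $M/N=\fit{G/N}$ contains its centralizer in $G/N$, the quotient $G/M$ is abelian. As $\oh{\pi_2}{G/M}=1$, it is a $\pi_1$-group. By the previous step $G/N$ is then Frobenius with kernel $M/N$, so $G$ is $2$-Frobenius.
\end{itemize}
Without an argument of this kind, or simply the citation to \cite{W} as in the paper, the ``only if'' part of your proposal is not established.
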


\begin{remark}\label{Components2Frobenius}
It is easily seen that, if $G$ is a $2$-Frobenius group, then the two connected components of $\ngk G$ are $\pi(\fit G)\cup\pi(G/\fitd G)$ and $\pi(\fitd G/\fit G)$.
\end{remark}

The next proposition
originally appeared as Proposition~1 in \cite{L}, but we present here a short proof. 

\begin{proposition}
Let \(G\) be a   solvable group, and let \(p\), \(q\), \(r\) be distinct
vertices of \(\ngk G\). Then at least two among \(p\), \(q\) and \(r\)
are adjacent in \(\ngk G\).
\label{silviaproposition}
\end{proposition}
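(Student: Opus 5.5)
We reduce to a Hall subgroup and then apply the Gruenberg–Kegel classification of disconnected graphs. Let $H$ be a Hall $\{p,q,r\}$-subgroup of $G$; it exists because $G$ is solvable. Since $\ngk H$ is a subgraph of $\ngk G$, any edge among $p,q,r$ in $\ngk H$ is an edge in $\ngk G$. So we may replace $G$ by $H$ and assume that $\pi(G)=\{p,q,r\}$. Suppose, for a contradiction, that no two of $p,q,r$ are adjacent. Then $\ngk G$ consists of three isolated vertices and is therefore disconnected with three connected components. However, $G$ is solvable, so Theorem~\ref{2FrobGK} states that a disconnected $\ngk G$ has exactly two connected components. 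This is a contradiction, and the proposition follows.

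The only real subtlety is that Theorem~\ref{2FrobGK} must apply to $H$ itself. This is fine, since $H$ is solvable and the theorem concerns arbitrary solvable groups.

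If one prefers not to rely on the ``two components'' clause, the argument can be made directly. By Theorem~\ref{2FrobGK}, $G$ is Frobenius or $2$-Frobenius.

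\emph{Frobenius case.} Let $K$ be the kernel and $H_0$ a complement. The kernel $K$ is nilpotent, so all primes dividing $|K|$ are pairwise adjacent. Hence $K$ is a $p$-group, say, and then $\pi(H_0)=\{q,r\}$.
\begin{itemize}
\item If $H_0$ is nilpotent, then $q$ and $r$ are adjacent.
\item Otherwise, Frobenius complements have cyclic or generalized quaternion Sylow subgroups for odd and even primes respectively. A group of order $q^a r^b$ with such Sylow subgroups in which no element has order $qr$ would be a Frobenius group. But a Frobenius complement cannot contain a Frobenius subgroup, since every subgroup of order $qr$ in a Frobenius complement is cyclic (Burnside's condition). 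So again $q$ and $r$ are adjacent.
\end{itemize}

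\emph{$2$-Frobenius case.} By Remark~\ref{Components2Frobenius}, the components are $\pi(\fit G)\cup\pi(G/\fitd G)$ and $\pi(\fitd G/\fit G)$. Both $\fit G$ and the cyclic group $G/\fitd G$ have pairwise adjacent prime divisors. If both contribute distinct primes, adjacency across them follows from Lemma~\ref{2FrobStructure} together with Lemma~\ref{dimensions}: an element of prime order in the cyclic top acts on $\fit G$ with nontrivial centralizer, because $\dim \cent V J>0$. Hence the first component is a clique, and with three primes it must contain an edge.
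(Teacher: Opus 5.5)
Your main argument is correct, but it follows a different route from the paper's.

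\textbf{Your route.} After reducing to a Hall $\{p,q,r\}$-subgroup, you use the full strength of Theorem~\ref{2FrobGK}: a disconnected GK-graph of a solvable group has \emph{exactly two} connected components. Three pairwise non-adjacent vertices would give three components, and you are done.

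\textbf{The paper's route.} The paper uses only the characterization part of that theorem, and only for a group with two prime divisors. It takes a minimal counterexample with $\pi(G)=\{p,q,r\}$. A minimal normal subgroup $V$ is then a Sylow $p$-subgroup, and the Fitting subgroup $K$ of a $p$-complement $U$ is, say, a $q$-group. For $H\in\syl r U$, the graph of $KH$ is disconnected, which forces $KH$ to be Frobenius with kernel $K$. Lemma~\ref{dimensions} then gives $\cent V H\neq 1$, hence an element of order $pr$.

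\textbf{What each buys.} Your version is a one-line corollary. However, it puts all the content into the component count of the cited theorem; after Hall reduction, that clause essentially is the proposition. The paper's proof is self-contained modulo the Frobenius/$2$-Frobenius characterization for two primes. It also exhibits the actual mechanism, namely fixed points of a Frobenius complement on a coprime module, which the paper uses again later.

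\textbf{Your fallback argument.} The ``direct'' argument is only a sketch and has some loose spots. These do not affect your main proof.
\begin{enumerate}
\item A $\{q,r\}$-group with no element of order $qr$ is Frobenius \emph{or} $2$-Frobenius, not necessarily Frobenius. You should say that in either case it contains a Frobenius subgroup.
\item Burnside's condition gives a contradiction only after one more observation. An element of prime order in the Frobenius complement normalizes the unique subgroup of prime order of a Sylow subgroup of the kernel; that Sylow subgroup is cyclic or generalized quaternion, since it lies in $H_0$. Acting trivially on that subgroup contradicts the Frobenius action, and acting nontrivially produces a non-cyclic subgroup of order $qr$.
\item In the $2$-Frobenius case, the pigeonhole step needs \emph{both} parts $\pi(\fit G)\cup\pi(G/\fitd G)$ and $\pi(\fitd G/\fit G)$ to be cliques. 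You only argue this for the first. The second is also a clique because $\fitd G/\fit G$ is cyclic, but you should say so.
\end{enumerate}
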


\begin{proof}
Let $G$ be a counterexample of minimal order. Since $G$ is solvable,
the existence of Hall subgroups and the minimality of $G$ imply that
$\pi(G)=\{p,q,r\}$. Further, a minimal normal subgroup $V$ of $G$
must be a Sylow subgroup of $G$,  say for the prime $p$,
and it can be viewed as a finite-dimensional $G$-module over the field with $p$ elements. Let $U$ be a
$p$-complement of $G$, and observe that the Fitting subgroup $K$ of $U$ has prime power order: $K$ is, say, a
$q$-group. Now, if $H$ is a Sylow $r$-subgroup of $U$, we have that $\ngk{KH}$ is disconnected. Therefore, an application of \Cref{2FrobGK} yields that $KH$ is a Frobenius group with kernel $K$ and complement $H$. We are then in a position to apply
Lemma~\ref{dimensions}, obtaining that $\cent V H\neq 1$. Hence
$pr$ divides the order of some element of $G$, a
contradiction.
\end{proof}

If $\chi$ is an irreducible character of the group $G$, then $\Q(\chi)$ denotes the field of values of $\chi$ (i.e., the field extension $\Q(\chi(g)\mid g\in G)$ of $\Q$ obtained by adjoining all the values taken by $\chi$ on the elements of $G$). If $x$ is an element of $G$ then $\Q(x)$ is the extension $\Q(\psi(x)\mid\psi\in\irr G)$ of $\Q$ obtained by adjoining all the irreducible character values on $x$. Observe that $\Q(x)$ is always a subfield of $\Q_{|x|}$.
Observe also that conjugation in $G$ defines a permutation action of $\bg G x$ on the set of generators of $\langle x\rangle$ in which every point stabilizer is trivial; in particular, setting $n$ to be the number of orbits under this action and $\phi$ the Euler function, we have $|\bg G x|=\phi(|x|)/n$. Since, as it is not difficult to check, $\bg G x$ is isomorphic to $\Gal(\Q_{|x|}/\Q(x))$, we see that the number $n$ of orbits coincides with $[\Q(x):\Q]$. 

Recall that a group $G$ is said to be \emph{rational} if every character of $G$ takes rational values. The above discussion shows that the following conditions are equivalent: (1) $G$ is rational; (2) $\Q(\chi)=\Q$ for every $\chi\in \Irr(G)$; (3) $\Q(x)=\Q$ for every $x\in G$; (4) for every $x\in G$ all the generators of $\GEN{x}$ are conjugate in $G$. 

The following two generalizations of rational groups were introduced in \cite{CD} and \cite{tentquadraticrat}, respectively.

\begin{definition}\label{DefSRCQ}
The group $G$ is said to be \emph{semi-rational} if $[\Q(x):\Q]\le 2$ for every $x\in G$. 

We say that $G$ is \emph{character-quadratic} if $[\Q(\chi):\Q]\le 2$ for every $\chi\in \Irr(G)$.
\end{definition}

As already mentioned, character-quadratic groups are also called quadratic rational, for instance  in \cite{tentquadraticrat}. We will also use the following terminology: an element $x$ of $G$ is \emph{rational in $G$} (respectively, \emph{semi-rational in} $G$) if all the generator of $\GEN{x}$ are in the same conjugacy class of $G$ (respectively, are in at most two conjugacy classes of $G$). 
Clearly, $G$ is rational (respectively, semi-rational) if every element of $G$ is rational in $G$ (respectively, semi-rational in $G$).
When all the generators of $\GEN{x}$ are conjugate to either $x$ or $x^{-1}$, then $x$ is said to be \emph{inverse semi-rational} in $G$ and when this holds for every element of $G$, then $G$ is said to be inverse semi-rational. Obviously inverse semi-rational groups are semi-rational. Furthermore, it has been proved in Chapter~8 of \cite{Bovdi} and in \cite{rs} that the following conditions are equivalent: (1) $G$ is inverse semi-rational; (2) all the central units of the integral group ring of $G$ are trivial (for that reason these groups are also called \texttt{cut}, as an abbreviation of ``central units are trivial''); (3) $\Q(\chi)$ is contained in an imaginary quadratic extension of $\Q$ for every $\chi\in \Irr(\Q)$. Therefore, every inverse semi-rational group is character-quadratic.

Next, we mention some properties of character-quadratic groups and semi-rational groups that will come into play.

\begin{proposition} \label{qrfacts}
    Let $G$ be a   group.
    \begin{enumeratei}
        \item If $G$ is character-quadratic (respectively, semi-rational) and $N$ is a normal subgroup of $G$, then $G/N$ is character-quadratic (respectively, semi-rational).
        \item If $G$ is abelian, then $G$ is character-quadratic if and only if $G$ is semi-rational, which is also equivalent to the fact that the orders of the elements of $G$ belong to the set $\{1,2,3,4,6\}$.
        \item If $G$ is semi-rational or character-quadratic, then, for every $x\in G$ and every integer $m$ coprime with the order of $x$, there is $g\in G$ such that $x^g=x^{m^2}$. 
    \end{enumeratei}
\end{proposition}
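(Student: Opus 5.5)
I will prove the three items separately, working throughout with the Galois-theoretic description of character values: for $x\in G$ of order $n$, the group $\Gal(\Q_n/\Q)\cong(\mathbb Z/n\mathbb Z)^\times$ acts on $\Irr(G)$ and on the generators of $\GEN{x}$, and $\sigma_m$ (with $\zeta_n\mapsto\zeta_n^m$) satisfies $\chi(x)^{\sigma_m}=\chi(x^m)=\chi^{\sigma_m}(x)$.

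For (a), both properties pass to quotients directly. If $G$ is character-quadratic, then $\Irr(G/N)$ is identified with the characters of $\Irr(G)$ containing $N$ in the kernel, and their fields of values are unchanged. If $G$ is semi-rational and $xN\in G/N$, I will lift to an element of the same order: write $x=x_{\pi}x_{\pi'}$ with $\pi=\pi(|xN|)$; the $\pi'$-part lies in $N$, so after replacing $x$ by $x_\pi$ I may take $|x|=|xN|\cdot t$ with $t$ a $\pi$-number. Generators of $\GEN{xN}$ are images of generators of $\GEN{x}$, since the reduction map $(\mathbb Z/|x|)^\times\to(\mathbb Z/|xN|)^\times$ is surjective. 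Conjugacy of generators in $G$ descends to $G/N$, so the number of $G/N$-classes of generators of $\GEN{xN}$ is at most the number of $G$-classes of generators of $\GEN{x}$, hence at most $2$.

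For (b), $G$ is abelian, so every $\chi\in\Irr(G)$ is linear with $\Q(\chi)=\Q_{e}$ where $e$ is the order of $\chi$ in the dual group $\hat G\cong G$. Also $\Q(x)=\Q_{|x|}$, because $\bg G x$ is trivial. Thus each condition says $\phi(d)\le 2$ for every element order $d$ (of $G$, respectively $\hat G$, which have the same element orders). Finally, $\phi(d)\le2$ holds exactly when $d\in\{1,2,3,4,6\}$.

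For (c), which is the substantive item, let $n=|x|$ and $m$ be coprime to $n$. If $G$ is semi-rational, then $\bg G x$ is identified with a subgroup $B$ of $(\mathbb Z/n\mathbb Z)^\times$ of index $[\Q(x):\Q]\le 2$. Hence $m^2\in B$, which means $x^{m^2}$ is conjugate to $x$. If $G$ is character-quadratic, then for each $\chi$ the field $\Q(\chi)$ has degree at most $2$, so $\sigma_m^2$ fixes $\Q(\chi)$, since the Galois group of $\Q(\chi)/\Q$ has exponent dividing $2$. Then $\chi(x^{m^2})=\chi(x)^{\sigma_m^2}=\chi(x)$ for all $\chi\in\Irr(G)$. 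By orthogonality of columns, $x^{m^2}$ and $x$ are conjugate. The only point needing care is the identification $\Q(\chi)\subseteq\Q_{|G|}$ together with $\sigma_m$ acting on values at $x$ via $x\mapsto x^m$. To have $\sigma_m$ defined on $\Q_{|G|}$, extend $m$ to an integer coprime to $|G|$ congruent to $m$ mod $n$ (Dirichlet or CRT); this does not change $x^{m^2}$.
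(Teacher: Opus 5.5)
Your proof is correct. It differs from the paper mainly in being self-contained, since the paper's proof consists almost entirely of citations. The paper handles (b) and the character-quadratic case of (a) by citing Proposition~2.6 of \cite{PV}, and calls the semi-rational case of (a) straightforward. It obtains (c) from Corollary~3.4 of \cite{dRV}: semi-rational and character-quadratic groups are quadratic-conjugated, i.e.\ $x^{m^2}$ is conjugate to $x$ for every $m$ coprime to the exponent of $G$. A Chinese Remainder argument then passes to $m$ coprime to $|x|$.

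You argue (c) directly instead. In the semi-rational case, the image of $\bg G x$ in $(\mathbb Z/n\mathbb Z)^\times$ has index $[\Q(x):\Q]\le 2$, so it contains every square; this case needs no CRT at all. In the character-quadratic case, $\sigma_m^2$ acts trivially on each $\Q(\chi)$, and you conclude by column orthogonality. Your extension of $m$ to an integer coprime to $|G|$ is exactly the CRT step the paper uses to go from ``coprime to the exponent'' to ``coprime to $|x|$''.

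What each route buys: the paper's is shorter and places (c) inside the uniform framework of quadratic-conjugated groups. Yours shows precisely where each hypothesis enters, and in effect reproves the relevant part of \cite{dRV}.

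One small remark on (a): replacing $x$ by its $\pi$-part is unnecessary. The reduction map $(\mathbb Z/|x|\mathbb Z)^\times\to(\mathbb Z/|xN|\mathbb Z)^\times$ is surjective whenever $|xN|$ divides $|x|$, and that surjectivity is all your argument actually uses.
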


\begin{proof} Statement (b) and the character-quadratic version of statement (a) are Proposition~2.6 of \cite{PV}, whereas the semi-rational version of statement (a) is straightforward. Statement (c) is a consequence of Corollary~3.4 of \cite{dRV} which states that if $G$ is semi-rational or character-quadratic then it is a \emph{quadratic-conjugated} group, i.e., for every $x\in G$ and for every integer $m$ coprime to the exponent of $G$, the element $x^{m^2}$ is conjugate to $x$ in $G$. Using the Chinese Remainder Theorem it is easy so see that this property is equivalent to statement (c).
\end{proof}

\begin{theorem}\label{primespectrum}
    Let $G$ be a solvable group.
    \begin{enumeratei}
     \item If $G$ is character-quadratic, then $\pi(G)\subseteq \{2,3,5,7,13\}$.
    \item If $G$ is semi-rational, then $\pi(G)\subseteq \{2,3,5,7,13,17\}$.
       \end{enumeratei}
\end{theorem}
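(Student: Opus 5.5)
This result is the combination of the main theorems of \cite{tentquadraticrat} (for (a)) and \cite{CD} (for (b)), so in the paper it can simply be quoted. If I were to prove it, I would argue by minimal counterexample and reduce to a question about solvable linear groups, in the spirit of Gow's proof that solvable rational groups are $\{2,3,5\}$-groups.

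\emph{Reduction to a module.} Let $G$ be a minimal counterexample and let $p\in\pi(G)$ be a forbidden prime. Since $G$ is solvable, $p$ divides the order of some chief factor $V=L/K$; this is an elementary abelian $p$-group. By \Cref{qrfacts}(a), $G/K$ is again character-quadratic (respectively, semi-rational), so I may assume $K=1$. Then $V$ is a minimal normal subgroup, and I regard it as a faithful irreducible $\F_pH$-module for the solvable group $H=G/\cent G V$. One should keep track of whether $p$ divides $|H|$. If it does, I would apply minimality to a proper quotient of $G$, or to a different chief factor, to land in the coprime situation.

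\emph{Converting the hypothesis into a condition on $H$.}
\begin{itemize}
\item In the semi-rational case, take $1\neq v\in V$. Then $|\bg G v|\ge (p-1)/2$. Since $\cent G V$ centralizes $v$, this gives a subgroup of $H$ stabilizing the line $\F_pv$ and acting on it by scalars forming a subgroup of $\F_p^\times$ of index at most $2$. The same holds for every nonzero vector.
\item In the character-quadratic case, I would run the dual argument on $\widehat V=\irr V$. For $1\neq\lambda\in\widehat V$, Clifford correspondence together with the field-of-values condition on the characters of $G$ lying over $\lambda$ should force the stabilizer of $\lambda$ in $H$ to induce all, or half, of the Galois automorphisms of $\Q_p$ on $\lambda$. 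Using \Cref{qrfacts}(c) (the quadratic-conjugated property) is the cleanest route: $\lambda^{m^2}$ is $H$-conjugate to $\lambda$ for every $m$ coprime to $p$.
\end{itemize}
In both cases the upshot is the same. Every nonzero vector of $V$ (or of $\widehat V$) is fixed up to the subgroup $(\F_p^\times)^2$ of scalars by an element of its stabilizer in $H$. Hence $H$ has at most $2(p^n-1)/(p-1)$ orbits on nonzero vectors, and each line is normalized by an element acting as a scalar of order at least $(p-1)/2$.

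\emph{Analysing $H$, the main obstacle.} This is the heart of the argument. I would use the structure theory of solvable linear groups (Manz–Wolf \cite{MW}). Either $V$ is imprimitive, and then an induction on a block stabilizer and a smaller module gives a contradiction via minimality. Or $H$ is primitive, so that $H$ has a normal subgroup $U$ of the form $\mathrm{GL}_1(p^m)$-part times an extraspecial/symplectic part, with $H/\fit H$ embedded in a symplectic group on a small space. In the semilinear case $H\le\Gamma L(1,p^n)$, I need an element of order divisible by $(p-1)/2$ that fixes a line pointwise up to squares. This forces $p-1$ to have only small prime power divisors compatible with the Galois group $C_n$ and the cyclic part. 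Combined with the orbit count, this should leave $p\le 13$ (respectively $p\le 17$, the extra prime arising because $(17-1)/2=8$ can be realised by a $2$-element in the semi-rational but not the character-quadratic setting). In the extraspecial case I would use the bound $|H|\le |V|^{2.25}/24^{1/3}$ together with the orbit bound to rule out all but finitely many $(p,n)$, and then check the survivors by hand. I expect this case analysis, especially separating $13$ from $17$ between (a) and (b), to be the hardest part.
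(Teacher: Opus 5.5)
Your opening sentence matches the paper exactly. The paper does not prove this statement; it quotes it, with (a) being Theorem~A of \cite{tentquadraticrat} and (b) being Theorem~2 of \cite{CD}. So as a proof this is correct and complete.

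The self-contained sketch you add, however, is not a proof, and several of its steps would fail as written.

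\begin{itemize}
\item Its endpoint, ``this should leave $p\le 13$'', does not give the statement. The prime $11$ lies in neither set, and nothing in your analysis excludes it.

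\item More seriously, the condition you extract on $V$ carries almost no information on its own. That condition is that the stabilizer in $H$ of each line induces on it a subgroup of index at most $2$ in $\F_p^\times$. It holds whenever $H$ contains the scalars $(\F_p^\times)^2$, for example for $C_p\rtimes C_{(p-1)/2}$ with any odd prime $p$.

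What rules out such examples is the hypothesis applied to $H\cong G/V$ itself. In this example $H$ is abelian, so \Cref{qrfacts}(b) forces $(p-1)/2\in\{1,2,3,4,6\}$, leaving only $p\in\{3,5,7,13\}$. One also needs the hypothesis on elements of $G$ outside $V$, and the restriction on $\pi(G/V)$ that comes from minimality.

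Your sketch never feeds the hypothesis back into $H$. The P\'alfy--Wolf upper bound on $|H|$, combined with the only lower bound your orbit count yields ($|H|\ge (p-1)/2$), bounds nothing.

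\item The imprimitive case cannot be handled by applying minimality to a block stabilizer acting on a block. Only quotients inherit being character-quadratic or semi-rational (\Cref{qrfacts}(a)); subgroups and sections do not.

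\item Your explanation of the prime $17$ is incorrect. By \Cref{semiratpel}, an element $v$ of order $17$ in a putative counterexample is semi-rational in both settings. Hence $|\bg G v|\ge 8$, and a $2$-element acting with order at least $8$ on $\langle v\rangle$ exists in both settings. The distinction between (a) and (b) must therefore come from information not visible in $\bg G v$, such as the field-of-values condition on characters.
\end{itemize}
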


\begin{proof}
Part (a) is Theorem~A of \cite{tentquadraticrat}, whereas part (b) is Theorem~2 of \cite{CD}.
\end{proof}

We note that, as of this writing, no example is known of a semi-rational solvable group whose order is actually divisible by $17$, while for every prime $p$ in $\{2,3,5,7,13\}$ there exist both character-quadratic solvable groups and semi-rational solvable groups of order divisible by $p$.

Finally, the next lemma shows that some particular elements of a character-quadratic group are semi-rational. This will be useful also for the subsequent propositions \ref{VerticesAndEdges} and \ref{Sylow}. 

\begin{lemma}[\cite{PV}, Lemma~2.15 and Proposition~2.16] \label{semiratpel}
 Let $G$ be a character-quadratic group and $x\in G$. If the order of $x$ is $p^n$ or $2p^n$ where $p$ is an odd prime number, or if the subgroup generated by $x$ is normal in $G$, then $x$ is semi-rational in $G$.
\end{lemma}

Given a graph $\Gamma$ and two vertices $v$ and $w$ of it, we write $v-w$ to indicate the edge joining $v$ and $w$, and $v-w\in\Gamma$ to mean that $v$ and $w$ are adjacent in $\Gamma$.

\begin{proposition}\label{VerticesAndEdges}
Let $G$ be a group that is character-quadratic or semi-rational. Also, let $p$ and $q$ be primes such that $p\equiv 1\mod 4$ and $q\equiv 1\mod 6$. Then the following conclusions hold.
\begin{enumeratei} 
\item If $p\in\pi(G)$, then $2\in\pi(G)$.
\item If $q\in\pi(G)$, then $3\in\pi(G)$. Moreover, if $q\equiv 1\mod 4$, then  $2-3\in\ngk G$.
\item If $t$ is a prime such that $t\not\in\{2,p\}$ and $p-t\in\ngk G$, then $2-t\in\ngk G$.
\item If $s$ is a prime such that $s\not\in\{3,q\}$ and $q-s\in \ngk G$, then $3-s\in\ngk G$. 
\end{enumeratei}
 \end{proposition}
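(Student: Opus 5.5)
The plan is to derive all four parts from a single mechanism. Take an element $x$ of prime order $r$ in $G$. Then $\bg G x$ embeds in $\aut{\GEN x}\cong C_{r-1}$ and has index at most $2$ there. This index bound is immediate when $G$ is semi-rational. When $G$ is character-quadratic we get it from \Cref{semiratpel}, since $|x|=r$ is a prime power (for $r$ odd), and the case $r=2$ is trivial. Equivalently, by \Cref{qrfacts}(c), every square in $(\mathbb Z/r\mathbb Z)^\times$ is realized by conjugation. So $\bg G x$ contains the subgroup of squares, which is cyclic of order $(r-1)/2$.

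Now choose $g\in\norm G{\GEN x}$ inducing an automorphism of order $d$ dividing $(r-1)/2$. Then the image of $g$ in $\bg G x$ has order $d$, so $|g|$ is divisible by $d$. Suitable powers of $g$ give elements of prime order acting nontrivially on $\GEN x$. The two cases are as follows.

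\textbf{Part (a).} Here $p\equiv 1 \mod 4$, so $(p-1)/2$ is even and $\bg G x$ has even order. Hence $2\in\pi(G)$.

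\textbf{Part (b).} Here $q\equiv 1\mod 6$, so $3\mid (q-1)/2$ and $3\in\pi(G)$. If moreover $q\equiv 1\mod 4$, then $6\mid (q-1)/2$. Take $y\in G$ normalizing $\GEN x$ whose image in $\bg G x$ has order $6$. Then $|y|$ is divisible by $6$, and a suitable power of $y$ has order exactly $6$. This gives $2-3\in\ngk G$.

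\textbf{Parts (c) and (d).} These use the same device applied to a mixed element. Suppose $p-t\in\ngk G$. Let $z$ have order $pt$ and write $z=xw$ with $|x|=p$, $|w|=t$, commuting. Since $p\equiv 1\mod 4$, pick by the Chinese Remainder Theorem an integer $m$ with:
\begin{itemize}
\item $m\equiv 1 \mod t$;
\item $m^2$ of order exactly $2$ modulo $p$, which is possible since the squares mod $p$ form a cyclic group of even order $(p-1)/2$.
\end{itemize}
By \Cref{qrfacts}(c) there is $g$ with $z^g=z^{m^2}$, i.e. $x^g=x^{-1}$ and $w^g=w$. Then $g$ centralizes $w$, and $g$ acts on $\GEN x$ as an automorphism of order $2$, so the image of $g$ in $\bg G x$ has order $2$. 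Write $g=g_2g_{2'}$ with $g_2$ the $2$-part of $g$, a power of $g$. Then $g_2$ still inverts $x$, since $g_{2'}$ acts on $\GEN x$ by an automorphism of odd order whose square-product with the $2$-part equals inversion, forcing the odd part to be trivial. Also $g_2$ still centralizes $w$, being a power of $g$. Thus $g_2\neq 1$, and $g_2$ has a power $u$ of order $2$ that is a power of $g$, so $u$ centralizes $w$. Since $t\neq 2$, $uw$ has order $2t$. Therefore $2-t\in\ngk G$.

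Part (d) is identical with $3$ in place of $2$: since $q\equiv 1\mod 6$, the squares mod $q$ contain an element of order $3$. Taking the $3$-part of $g$ and then a power of order $3$ gives an element of order $3$ commuting with $w$, where $s\neq 3$.

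\textbf{Main obstacle.} The key point is making sure that \Cref{qrfacts}(c) applies to the element $z$ of composite order in the character-quadratic case. It does, because it is stated for every $x\in G$ and every $m$ coprime with $|x|$. So no appeal to \Cref{semiratpel} is even needed, and the whole argument is uniform in the two hypotheses. Part (b) is handled the same way, with $m^2$ of order $6$ modulo $q$ acting on $x$ of order $q$.
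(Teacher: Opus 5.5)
Your proposal is correct and essentially follows the paper's proof. Parts (a) and (b) come from $\bg G x$ being a subgroup of index at most $2$ in the cyclic group $\aut{\GEN x}$, and parts (c), (d) from \Cref{qrfacts}(c) applied to an element of order $pt$ (resp.\ $qs$), with $m$ chosen by the Chinese Remainder Theorem so that $m\equiv 1$ modulo $t$ (resp.\ $s$) and $m^2$ has order $2$ (resp.\ $3$) modulo $p$ (resp.\ $q$). Your passage through the $2$-part $g_2$ is superfluous: since the image of $g$ in $\bg G x$ has order $2$, $|g|$ is already even, and the involution of $\GEN g$ centralizes $w$ directly, which is exactly how the paper concludes.
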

 
 \begin{proof} Let $x\in G$ be an element of order $p$; then $x$ is clearly a semi-rational element of $G$ if $G$ is semi-rational, but the same holds also if $G$ is character-quadratic in view of \Cref{semiratpel}. Hence $\bg G x$ is isomorphic to a subgroup of index at most $2$ in $\aut{\langle x\rangle}$, and the latter is cyclic of order divisible by $4$, hence (a) follows. An entirely similar argument shows that the existence of an element of order $q$ in $G$ implies the existence of an element of order $3$ (and of an element of order $6$, if $q\equiv 1\mod 4$) in $G$, therefore also (b) is proved.
 
 As regards (c) and (d), 
  we will take into account the following observation. 
\begin{itemize} 
\item \emph{If $t$ is any prime different from $p$, then there exists an integer $m$ with ${\rm {gcd}}(m,pt)=1$, such that $m^2\equiv 1\mod t$, $m^2\not\equiv 1\mod pt$, and $m^4\equiv 1\mod pt$.} In fact, it is enough to take an integer $m$ having order $4$ modulo $p$ and satisfying $m\equiv 1\mod t$ (such an $m$ exists by our assumption on $p$ and by the Chinese Remainder Theorem).

\item \emph{If $s$ is any prime different from $q$, then there exists an integer $m$ with ${\rm {gcd}}(m,qs)=1$, such that $m^2\equiv 1\mod s$, $m^2\not\equiv 1\mod qs$, and $m^6\equiv 1\mod qs$.} As above, it is enough to take an integer $m$ having order $6$ modulo $q$ and such that $m\equiv 1\mod s$. 

\end{itemize}

Given that, let $t$ be a prime as in (c), choose $x\in G$ of order $pt$ and take $m$ as in the first part of the above observation. By \Cref{qrfacts}(c), there exists $g\in G$ with $x^g=x^{m^2}\ne x$, hence $g$ lies in $\norm G{\langle x \rangle}\setminus\cent G x$. On the other hand, we have $x^{g^2}=x^{m^4}=x$, therefore $g^2$ lies in $\cent G x$ and so $g$ has even order. Since $g$ clearly centralizes the element $x^p$ of order $t$, the involution $y$ of $\langle g\rangle$ centralizes $x^p$ as well. Now the element $x^py$ has order $2t$, and the proof of (c) is complete.
	
Claim (d) can be proved with the same technique, choosing an element $x\in G$ of order $qs$ and using the second part of the above observation. 
 \end{proof}
 
 \begin{proposition}\label{Sylow}
 Let $G$ be a group, $p$ a prime divisor of $|G|$ and $P$ a Sylow $p$-subgroup of $G$.
 \begin{enumeratei}
 \item Assume that $G$ is semi-rational. If $P$ is cyclic, then the order of $P$ is either $p$ or $4$. 
 \item Assume that $G$ is character-quadratic. If $P$ is cyclic, then the order of $P$ is either $p$, $4$ or $8$.
 \item If $G$ is semi-rational or character-quadratic and $P$ is a generalized quaternion group, then $|P|\le 16$.
 \end{enumeratei}
 \end{proposition}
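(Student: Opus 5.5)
For a cyclic or generalized quaternion Sylow subgroup, I would bound the size of $\bg G x$ for a suitable $2$-element or $p$-element $x$ generating a large cyclic subgroup. I would then compare it with the number of $G$-classes of generators of $\langle x\rangle$, which by the discussion before \Cref{DefSRCQ} equals $\phi(|x|)/|\bg G x|$. The character-quadratic case can only use \Cref{semiratpel} and \Cref{qrfacts}(c), not semi-rationality of every element, so there I need a separate argument.

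\emph{Cyclic case.} Let $P=\langle x\rangle$ with $|x|=p^n$.
\begin{itemize}
\item Since $\langle x\rangle=P$, we have $\norm G{\langle x\rangle}=\norm G P$. As $P$ is an abelian Sylow subgroup, $P\le \cent G P$, so $\bg G x=\norm GP/\cent GP$ is a $p'$-group.
\item It embeds in $\aut P$, which has order $p^{n-1}(p-1)$. Hence $|\bg Gx|$ divides $p-1$ for $p$ odd, and $\bg Gx=1$ for $p=2$ because $\aut P$ is then a $2$-group.
\item The number of classes of generators is therefore at least $p^{n-1}$ for $p$ odd, and exactly $2^{n-1}$ for $p=2$.
\end{itemize}
If $G$ is semi-rational, this number is at most $2$. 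This forces $n=1$ for $p$ odd and $n\le 2$ for $p=2$, which gives (a).

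If $G$ is character-quadratic and $p$ is odd, then $x$ is semi-rational in $G$ by \Cref{semiratpel}, and the same count gives $|P|=p$. For $p=2$ I would use \Cref{qrfacts}(c) instead: for every odd $m$, $x^{m^2}$ is conjugate to $x$, so $\bg Gx$ contains the subgroup of squares of $\aut P\cong(\mathbb Z/2^n)^\times$. For $n\ge3$ that subgroup has order $2^{n-3}$. Since $\bg Gx=1$, we get $n\le 3$, so $|P|\le 8$. This proves (b).

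\emph{Quaternion case (c).} Let $P$ be generalized quaternion of order $2^n$, $n\ge3$, and let $X=\langle x\rangle$ be its cyclic subgroup of index $2$ (take $X$ characteristic, as a maximal cyclic subgroup when $n\ge4$).
\begin{itemize}
\item $\aut X$ is a $2$-group, so $\bg Gx$ is the image of a Sylow $2$-subgroup $Q$ of $\norm GX$.
\item Since $P\le\norm GX$ and $P$ is Sylow in $G$, we may take $Q=P$.
\item Hence $\bg Gx\cong P/\cent Px=P/X$ has order $2$, and it is generated by inversion.
\end{itemize}
In the semi-rational case, the number of generator classes is $2^{n-2}/2=2^{n-3}\le 2$, so $n\le4$ and $|P|\le 16$. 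In the character-quadratic case, \Cref{qrfacts}(c) again puts the squares of $(\mathbb Z/2^{n-1})^\times$ inside $\bg Gx$. These squares lie in $\langle 5\rangle$, which does not contain $-1$, so they must be trivial. That forces $2^{n-1}\le 8$, giving $n\le4$.

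The step needing the most care is this last one: identifying $\bg Gx$ exactly as the inversion group, via the Sylow argument, and then checking that the squares subgroup meets it trivially. The small exceptional cases ($n=3$, or $X$ not characteristic in $Q_8$) are harmless, since then $|P|\le 16$ anyway.
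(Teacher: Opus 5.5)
Your proof is correct, and for (a), the odd-$p$ part of (b), and (c) it follows the paper's line. In the cyclic case, $\bg Gx$ is a $p'$-group because the abelian Sylow subgroup $P=\langle x\rangle$ centralizes $x$. In the quaternion case, the only automorphisms of the index-$2$ cyclic subgroup induced in $G$ are those induced inside a Sylow $2$-subgroup, namely $\pm1$.

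The paper reaches the quaternion conclusion differently in form. It takes $g$ with $x^g=x^9$ to be a $2$-element, so that $\langle x,g\rangle$ is a quaternion Sylow subgroup, which forces $x^9=x^{-1}$. Your identification $N_G(X)=P\,\cent GX$ is the same idea. Two small remarks on it:
\begin{itemize}
\item It only needs $X\trianglelefteq P$, not that $X$ be characteristic.
\item Your separate counting argument for the semi-rational case of (c) is redundant, since \Cref{qrfacts}(c) holds in both classes.
\end{itemize}

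The one genuine difference is in (b) for $p=2$. The paper appeals to Lemma~2.9 of \cite{PV}, which says $\phi(|x|)$ divides $4|\bg Gx|$. You instead obtain the bound from \Cref{qrfacts}(c): $\bg Gx$ contains the squares of $\aut P\cong(\mathbb Z/2^n)^\times$, a subgroup of order $2^{n-3}$ (index $4$) when $n\ge 3$. This yields exactly the same divisibility in this situation. Its advantage is that it uses only a fact already recorded in the paper, so (b) and (c) rest on the single input \Cref{qrfacts}(c).
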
 
 
 \begin{proof} 
 Suppose first that $P$ is cyclic of order $p^n$ generated by $x$ with $n>1$. 
 If $x$ is semi-rational in $G$, then $\bg G x=\norm G{\langle x\rangle}/\cent G x$ has an order divisible by $\phi(p^n)/2$ but, as $P$ is abelian, $\cent G x$ contains $P$ which is a Sylow $p$-subgroup of $G$, and we conclude that $\phi(p^n)/2$ is coprime to $p$. This is not possible if $p$ is odd or $p=2$ and $n\ge 3$. So if $p$ is odd, then $x$ is not semi-rational in $G$ and hence $G$ is neither semi-rational nor character-quadratic by \Cref{semiratpel}. Moreover, if $G$ is semi-rational, then necessarily $n=2$.  
 This argument fails if $G$ is character-quadratic as $x$ might not be semi-rational. 
 However we are in a position to apply Lemma~2.9 of \cite{PV}, obtaining that $\phi(|x|)$ is a divisor of $2^2\cdot|\bg G x|$. Therefore, $2^{n-3}$ divides $|\bg G x|$, whose $2$-part is $1$. So in this case $n\le 3$. This finishes the proof of (a) and (b).
 
Suppose now that $G$ is semi-rational or character-quadratic and, for a proof by contradiction, that $P$ is generalized quaternion of order $2^{n+1}$ with $n>3$. Then $P$ has an element $x$ of order $2^n$ and, by statement (c) of \Cref{qrfacts}, $G$ has an element $g$ such that $x^g=x^9$. Note that we may assume, without loss of generality, that $g$ is a $2$-element. Since $n>3$, $g$ does not centralize $x$ and therefore $g\not\in\GEN x$; it follows that  $Q=\GEN{x,g}$ is a Sylow $2$-subgroup of $G$ and $x$ and $x^9$ are conjugate in $Q$. As $Q$ is generalized quaternion, we have $x^9=x^{-1}$ and hence $x^{10}=1$, a clear contradiction.
\end{proof}

As the symmetric group on $p$ symbols is rational, with Sylow $p$-subgroup of order $p$, and the cyclic group of order $4$ and the quaternion groups of order $8$ and $16$ are semi-rational and character-quadratic, \Cref{Sylow} is sharp, except that we do not know any character-quadratic group with a cyclic subgroup of order $8$ as a Sylow $2$-subgroup.

\section{Character-quadratic or semi-rational $2$-Frobenius groups}\label{Section2Frobenius}

In this section, we describe the $2$-Frobenius groups that are character-quadratic or semi-rational. Together with the classification obtained in \cite{PV} of Frobenius groups belonging to these classes, and in view of \Cref{2FrobGK}, this will enable us to list the disconnected graphs that arise as GK-graphs of solvable character-quadratic or semi-rational groups. 

Recall that, for a positive integer $n$, we write $C_n$ for a cyclic group of order $n$.

\begin{lemma}\label{FieldsOfValues2Frob} Let $G$ be a $2$-Frobenius group, and let $p$ be a prime number not dividing $m=|G/\fitd G|$. Assume that $\fit G$ is an elementary abelian $p$-group. Then there exist $\chi\in\irr G$ and $x\in G$ such that $\Q(\chi)=\Q(x)=\Q(\chi(x))=\Q_{pm}$.
\end{lemma}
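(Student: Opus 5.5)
The plan is to set up the standard decomposition of $G$ and build $x$ and $\chi$ explicitly from a complement of $\fitd G$. Write $V=\fit G$ and $K=\fitd G$. By \Cref{2FrobStructure}(b), $V$ has a complement $H$ in $G$, so $G=V\rtimes H$ with $H\cong G/V$. Here $H$ is a Frobenius group whose kernel $Y\cong K/V$ is cyclic of odd order, and whose complement $C$ is cyclic of order $m$ by \Cref{2FrobStructure}(a). Also $Y$ acts fixed-point-freely on $V$, since $K=VY$ is Frobenius with kernel $V$. Since $p\nmid |Y|$, \Cref{dimensions} applies to the $\F_pH$-module $V$ and gives $\cent V C\neq 1$. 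Because $p\nmid m$, coprime action gives $V=\cent V C\times[V,C]$.

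First I will choose $x$. Fix a generator $c$ of $C$ and $1\neq v\in\cent V C$, and put $x=vc$, an element of order $pm$. Next I will choose $\lambda\in\irr V$. Restriction identifies the $C$-invariant characters of $V$ with $\irr{\cent V C}$, namely those trivial on $[V,C]$. So I can pick a $C$-invariant $\lambda$ with $\lambda(v)\neq 1$.

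The key step is computing the inertia group $T=I_G(\lambda)$. The image $T\cap H$ contains $C$ and meets $Y$ trivially, because $Y$ acts fixed-point-freely on $\irr V$ by coprime action. A subgroup of the Frobenius group $H$ meeting the kernel trivially has order dividing $m$, so $T\cap H=C$ and $T=VC$. Now $\lambda$ extends to $\hat\lambda(wc')=\lambda(w)$ on $VC$. Take $\mu$ a faithful linear character of $C\cong VC/V$ and set $\theta=\hat\lambda\mu$ and $\chi=\theta^G$. By the Clifford correspondence, $\chi\in\irr G$. Its values are sums of values of $\theta$, which are $p$th and $m$th roots of unity, so $\Q(\chi)\subseteq\Q_{pm}$.

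Now I evaluate $\chi(x)$ by the induction formula. If $x^g\in VC$, then projecting to $H$ gives $c^h\in C$. Since $C$ is a Frobenius complement in $H$, this forces $h\in C$, and hence $g\in VC$. For $w\in V$ we have $x^w=v\,c\,[c,w]$, and $\lambda([c,w])=1$ by $C$-invariance of $\lambda$. Conjugation by $C$ fixes $v$. Hence $\theta$ is constant, equal to $\theta(x)$, on all the relevant conjugates, and therefore
$$\chi(x)=\theta(x)=\lambda(v)\mu(c),$$
a primitive $pm$th root of unity. This gives $\Q_{pm}=\Q(\chi(x))\subseteq\Q(\chi)\subseteq\Q_{pm}$. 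It also gives $\Q_{pm}=\Q(\chi(x))\subseteq\Q(x)\subseteq\Q_{|x|}=\Q_{pm}$, which proves all three equalities.

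I expect the main obstacle to be the inertia computation $I_G(\lambda)=VC$, together with the claim that only elements of $VC$ conjugate $x$ into $VC$. Both rest on the Frobenius property of $H$ (complements are self-normalizing and TI) and on the fixed-point-free action of $Y$ on $\irr V$. I will check these carefully, including that a subgroup of $H$ meeting $Y$ trivially and containing $C$ must equal $C$.
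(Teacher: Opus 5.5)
Your proof is correct and follows essentially the same route as the paper. Both arguments use the same ingredients: a complement $VC$, where $C$ is a Frobenius complement of the complement of $\fit G$; a nontrivial $C$-fixed point obtained from \Cref{dimensions}; and a linear character of $VC$ that is faithful on a cyclic quotient of order $pm$. The paper takes a faithful linear character of $FH/F_0$ with $F=\langle z\rangle\times F_0$, which is your $\hat\lambda\mu$ with $[V,C]\subseteq F_0$; it then gets inertia group $VC$ from Brauer's permutation lemma and uses the Frobenius-complement TI argument to obtain $\chi(x)=\theta(x)$, exactly as you do. Your explicit check that $\theta$ agrees on the $VC$-conjugates of $x$ is automatic, since $\theta$ is a class function on $VC$.
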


\begin{proof} Set $F=\fit G$. By \Cref{2FrobStructure}(b) we know that there exists a complement $T$ for $F$ in $G$, and $T$ is a Frobenius group. Observe that, denoting respectively by $K$ and $H$ the Frobenius kernel and a Frobenius complement of $T$, we have $FK=\fitd G$, and $H$ is a cyclic group of order $m$. 

Now, $F$ can be viewed as a faithful module for $T$ over ${\rm{GF}}(p)$, whose dimension is larger than~$1$ because $F$ is not cyclic by \Cref{2FrobStructure}. In view of \Cref{dimensions} there exists a non-trivial element $z$ in $\cent F H$, and therefore Maschke's Theorem ensures that we can write $F=\GEN z\times F_0$ for a suitable non-trivial subgroup $F_0$ of $F$ with $H\subseteq\norm G{F_0}$. In particular, $F_0$ is normal in $FH$. Note also that $FH/F_0\cong \GEN z\times H$ is a cyclic group of order $pm$.

Consider now a faithful linear character $\lambda$ of $FH/F_0$. Then $\Q(\lambda)=\Q_{pm}$; moreover, the restriction $\lambda_0$ of $\lambda$ to $F$ is a non-trivial (linear) character of $F$ whose inertia subgroup $I_G(\lambda_0)$ contains $FH$. But, by Brauer Permutation Lemma (\cite[Theorem 6.32]{I}), $K$ acts fixed-point freely on the set of non-trivial linear characters of $F$, thus we have indeed $I_G(\lambda_0)=FH$. As a consequence, $\lambda$ is an irreducible character of $I_G(\lambda_0)$ ``lying over $\lambda_0$'' (i.e., whose restriction to $F$ has $\lambda_0$ as an irreducible constituent), and by Clifford's Theory the induced character $\chi=\lambda^G$ is an irreducible character of $G$. Clearly, we get $\Q(\chi)=\Q(\lambda^G)\subseteq \Q(\lambda)=\Q_{pm}$.

Finally, let $x\in FH$ be such that $xF_0$ is a generator of $FH/F_0$, hence $x$ has order $pm$ and $\Q(x)\subseteq \Q_{pm}=\Q(\lambda(x))$. By the definition of an induced character, we get \[\chi(x)=\lambda^G(x)=\sum_{t\in K}\lambda^{\circ}(txt^{-1})\] where, for $y\in G$, $\lambda^{\circ}(y)$ is defined to be $\lambda(y)$ if $y\in FH$ and $0$ otherwise. Since $txt^{-1}$ is not contained in $FH$ unless $t=1$, we see that $\chi(x)=\lambda(x)$. We conclude that $\Q(\chi)=\Q(x)=\Q(\chi(x))=\Q_{pm}$.
\end{proof}

\smallskip
The next theorem yields significant restrictions on the structure of a $2$-Frobenius group which is either character-quadratic or semi-rational. 

\begin{theorem}\label{2FrobQRSR} Let $G$ be a $2$-Frobenius group. If $G$ is either character-quadratic or semi-rational then, setting $F=\fit G$, we have $\pi(F)\subseteq\{2,3\}$ and one of the following holds.
\begin{enumeratei}
\item $G/F\cong C_3\rtimes C_2$. In this case $F$ is a $2$-group, and $\ngk G$ is $(2\quad 3)$.
\item $G/F\cong C_5\rtimes C_2$ and $3$ divides $|F|$. In this case $\ngk G$ is $(2-3\quad 5)$.
\item $G/F\cong C_5\rtimes C_2$ and $F$ is a $2$-group. In this case $\ngk G$ is $(2\quad 5)$.
\item $G/F\cong C_5\rtimes C_4$. In this case $F$ is a $2$-group, and $\ngk G$ is $(2 \quad 5)$.
\item $G/F\cong  C_7\rtimes C_3$ and $2$ divides $|F|$. In this case $\ngk G$ is $(2-3 \quad 7)$.
\item $G/F\cong  C_7\rtimes C_3$ and $F$ is a $3$-group. In this case $\ngk G$ is $(3 \quad 7)$.
\item $G/F\cong C_7\rtimes C_6$. In this case $\ngk G$ is $(2-3 \quad 7)$.
\item $G/F\cong C_{13}\rtimes C_6$. In this case $\ngk G$ is $(2-3 \quad 13)$.
\end{enumeratei}
\end{theorem}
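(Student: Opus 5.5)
Write $K=\fitd G$, $m=|G/K|$, $k=|K/F|$. By \Cref{2FrobStructure}, $G/K\cong C_m$ and $K/F\cong C_k$ with $k$ odd. $G/F$ is a Frobenius group with cyclic kernel $C_k$ and cyclic complement $C_m$. By \Cref{qrfacts}(a), $G/F$ is character-quadratic (respectively semi-rational).

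\textbf{Step 1: the possible $G/F$.} Apply the classification of character-quadratic/semi-rational Frobenius groups in \cite{PV}, or argue directly. A generator $y$ of the cyclic normal subgroup $K/F$ is semi-rational (\Cref{semiratpel}, normal cyclic case). Hence $|\bg{G/F}{y}|=|G/K|=m$ has index at most $2$ in $\aut{C_k}$. Since the action is Frobenius, $m$ divides $p-1$ for each prime $p\mid k$. This forces $k$ to be prime: for $k$ composite or a prime power, $\aut{C_k}$ is too large or not cyclic. Then $m\in\{k-1,(k-1)/2\}$, and \Cref{primespectrum} restricts $k\in\{3,5,7,13\}$ ($17$ is also allowed in the semi-rational case). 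The abelian quotient $G/K\cong C_m$ has exponent in $\{1,2,3,4,6\}$ by \Cref{qrfacts}(b). This leaves $C_3\rtimes C_2$, $C_5\rtimes C_2$, $C_5\rtimes C_4$, $C_7\rtimes C_3$, $C_7\rtimes C_6$ and $C_{13}\rtimes C_6$, and excludes $17$ because $m\ge 8$ there.

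\textbf{Step 2: $\pi(F)$ and the pairing with $m$.} Let $p\in\pi(F)$. Take a chief factor $F/L$ with $F/L$ an elementary abelian $p$-group and $L<F$ normal, and pass to $G/L$, which is $2$-Frobenius by \Cref{2FrobStructure}(c) and still character-quadratic or semi-rational.
\begin{itemize}
\item If $p\nmid m$, \Cref{FieldsOfValues2Frob} produces $\chi$ and $x$ with $\Q(\chi)=\Q(x)=\Q_{pm}$. Thus $\phi(pm)\le 2$, so $pm\in\{3,4,6\}$ up to the given $m$. This forces $p\in\{2,3\}$, with $p=3$ only when $m\in\{1,2\}$, and $p=2$ when $m\in\{1,3\}$.
\item If $p\mid m$, then $p\in\pi(m)\subseteq\{2,3\}$ automatically.
\end{itemize}
Hence $\pi(F)\subseteq\{2,3\}$.

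Now rule out the mismatches case by case:
\begin{itemize}
\item For $m=2$ and $k=3$: $p=3$ is impossible since $\pi(F)\cap\pi(K/F)=\emptyset$. So $F$ is a $2$-group, giving (a).
\item For $m=2$ and $k=5$: both $p=2$ and $p=3$ are allowed, giving (b) and (c).
\item For $m=4$ and $k=5$: $p=3\nmid m$ would give $\Q_{12}$, of degree $4$, which is impossible. So $F$ is a $2$-group, giving (d).
\item For $m=3$ and $k=7$: $p=2$ gives $\Q_6$, which is allowed, and $p=3$ is allowed. This gives (e) and (f).
\item For $m=6$: any $p\in\{2,3\}$ divides $m$, giving (g) and (h).
\end{itemize}

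\textbf{Step 3: the graphs.} Use \Cref{Components2Frobenius}: the components are $\pi(F)\cup\pi(m)$ and $\{k\}$. It remains to check that when $\pi(F)\cup\pi(m)=\{2,3\}$ the edge $2-3$ is present. This follows from \Cref{VerticesAndEdges}(b) when $k=13$. In the other cases, use the fact that the complement $H\cong C_m$ centralizes a nontrivial element of each elementary abelian chief factor of $F$ (\Cref{dimensions}, as in \Cref{FieldsOfValues2Frob}), lifted by a coprime-action argument. When $m=6$, the subgroup $H$ itself contains an element of order $6$.

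\textbf{Main obstacle.} Step 1 is the main obstacle: carefully excluding non-prime $k$, and $17$ in the semi-rational case. I would handle it by citing \cite{PV} for Frobenius groups directly.
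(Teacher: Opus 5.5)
Your proposal is correct and follows essentially the paper's proof. You determine $G/F$ from the classification in \cite{PV}; your optional direct derivation via the semi-rationality of a generator of $\fitd G/F$ also works. You then apply \Cref{FieldsOfValues2Frob} to a quotient with elementary abelian Fitting subgroup to force $\phi(pm)\le 2$ whenever $p\in\pi(F)$ and $p\nmid m$. Your passage to a $G$-chief factor $F/L$ is a slightly more direct substitute for the paper's reduction to $G/\oh{p'}{F}$ modulo its Frattini subgroup.

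Your extra verification of the edge $2-3$ in Step 3 is correct but unnecessary. \Cref{Components2Frobenius} already identifies $\pi(F)\cup\pi(G/\fitd G)$ as a connected component of $\ngk G$, so the edge is automatic when that set is $\{2,3\}$.
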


\begin{proof} The Main Theorem of \cite{PV} shows that, in the class of Frobenius groups, the property of being character-quadratic or semi-rational are equivalent, and it provides a classification of all Frobenius groups having these properties. Since $G/F$ is in fact a Frobenius group of this kind, we can use the classification in \cite{PV}, together with \Cref{2FrobStructure}(a), to deduce that $G/F$ has one of the structures described in the conclusions (a)-(h) of the statement. In particular, setting $m=|G/\fitd G|$, we have $m\in \{2,3,4,6\}$.

Next, to prove the claim that $\pi(F)$ is contained in $\{2,3\}$ by means of contradiction, we assume that  $p$ is a prime divisor of $|F|$ greater than $3$. Let us decompose the nilpotent group $F$ as $\oh p F \times \oh {p'} F$. Since $\oh{p'} F$ is a proper subgroup of $F$ which is normal in $G$, the factor group ${\overline{G}}=G/\oh{p'}F$ is also a $2$-Frobenius group that is either character-quadratic or semi-rational. Denoting now by ${\overline{\Phi}}$ the Frattini subgroup of ${\overline G}$, we get that $\overline{\Phi}$ is a proper subgroup of $\fit{\overline{G}}$ (see Corollary 2.25 of \cite{IAlgebra}). Thus  the factor group ${\overline G}/{\overline{\Phi}}$ satisfies the hypotheses of \Cref{FieldsOfValues2Frob}; but since ${\overline G}/{\overline{\Phi}}$ is character-quadratic or semi-rational, we must have $[\Q_{pm}:\Q]\leq 2$, which is clearly not the case because $p\geq 5$. This contradiction proves our claim concerning the order of $F$.

Since $[\Q_{12}:\Q]=4>2$, the same argument as in the paragraph above works to show that $p=3$ cannot divide $|F|$ when $G/F\cong C_5\rtimes C_4$, as in (d). Clearly $F$ must be a $2$-group also in case (a), by the coprimality of $|F|$ with $|\fitd G/F|$. Finally, the structure of the GK-graph of $G$ in each of the cases (a)-(h) follows from \Cref{Components2Frobenius}, and the proof is complete.
\end{proof}

As a partial converse for the previous result, we also prove the following.

\begin{theorem}\label{2FrobConverse}
Let $G$ be a $2$-Frobenius group such that $F=\fit G$ is abelian of squarefree exponent. If  $\pi(F)\subseteq\{2,3\}$ and  one among {\rm {(a)-(h)}} of $\Cref{2FrobQRSR}$ holds, then $G$ is character-quadratic and semi-rational.
\end{theorem}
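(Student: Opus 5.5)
The plan is to verify the two properties directly, using the explicit shape of $G$. By hypothesis $F=F_2\times F_3$ with $F_2,F_3$ elementary abelian. By \Cref{2FrobStructure}(b), $G=F\rtimes T$ with $T=K\rtimes H$ a Frobenius group from the list (a)--(h). Here $K=C_k$ with $k\in\{3,5,7,13\}$ and $H=C_m$ with $m\in\{2,3,4,6\}$. By the Main Theorem of \cite{PV}, $T\cong G/F$ is already both character-quadratic and semi-rational. The Brauer Permutation Lemma and the fact that $K$ acts fixed-point-freely on $F$ will be the basic tools throughout.

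\emph{Semi-rationality.} Take $x\in G$.
\begin{itemize}
\item If $xF\in \fitd G/F\setminus\{1\}$, write $x=fy$ with $y\in K$ after conjugation. Since $\cent F y=1$, coprime action shows that every element of $Fy$ is $F$-conjugate to $y$. Hence $x$ is conjugate to an element of $T$, and $\bg T y\le \bg G y$ gives $[\Q(x):\Q]\le 2$.
\item Otherwise $x$ is a $\pi(F)\cup\pi(m)$-element. Up to conjugation it lies in $FH$, so $|x|$ divides $2\cdot 3\cdot m$ or $2m$. If $|x|\in\{1,2,3,4,6\}$, then $\Q(x)\subseteq\Q_{|x|}$ has degree at most $2$ and there is nothing to prove.
\item The remaining orders are $8$ (case (d), since $x^4$ may be a nontrivial norm element of $\cent F H$) and $12$ (case (g) or (h), with $x=zh$, $z\in\cent F{h}$ of order $3$ or $2$, $h\in H$). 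For these I will exhibit explicitly an element of $\norm G{\langle x\rangle}$ inducing a nontrivial power map. Since $|\bg G x|=\phi(|x|)/[\Q(x):\Q]$, it suffices to show that $|\bg G x|\geq 2$.
\end{itemize}
For order $12$, I will use the structure of $T/K\cong H$ acting on $F_3$ and $F_2$ together with \Cref{dimensions}: the eigenspaces of $h$ on $F$ are permuted by $H$-conjugates coming from $N_T(H)$ and by elements of $F$. The goal is to find $g$ with $x^g=x^{-1}$ or $x^{5}$ or $x^{7}$, for instance by inverting the $F$-component while fixing $h$ via an element of $\cent F h$-cosets.

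\emph{Character-quadraticity.} Let $\chi\in\irr G$ lie over $\theta\in\irr F$. If $\theta=1_F$, then $\chi\in\irr{G/F}$ and we are done. Otherwise $K$ acts fixed-point-freely on $\irr F\setminus\{1\}$, so $I_G(\theta)=FH_0$ for some $H_0\le H$ (after conjugation). Since $F$ is abelian and $I_G(\theta)/F$ is cyclic, $\theta$ extends to some $\hat\theta$. Gallagher's theorem and Clifford correspondence then give $\chi=(\hat\theta\mu)^G$ with $\mu\in\irr{H_0}$, exactly as in the proof of \Cref{FieldsOfValues2Frob}. Hence $\Q(\chi)\subseteq\Q_{6|H_0|}$ or $\Q_{2|H_0|}$, and the only problematic fields are again $\Q_8$ and $\Q_{12}$. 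To bound $\Q(\chi)$, I will compute the subgroup of $\Gal$ fixing $\chi$: $\sigma$ fixes $\chi$ if and only if $(\hat\theta\mu)^\sigma$ is $G$-conjugate to $\hat\theta\mu$, i.e. $\theta^\sigma$ lies in the $G$-orbit of $\theta$ with the matching extension.

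The \textbf{main obstacle} is this last computation, together with its dual for elements of order $8$ and $12$. Its crux is showing that the Galois group $(\Z/12)^\times$ or $(\Z/8)^\times$ is cut down to order at most $2$ by $G$-conjugation, using that $\theta^{-1}$ and the relevant Galois twists of $\theta$ lie in the same $N_G(FH_0)$-orbit. I would first try to reduce to the smallest such configuration, $F$ of minimal size under $T$: both properties pass to $G/L$ for normal $L<F$ by \Cref{2FrobStructure}(c), and $F$ is a completely reducible $T$-module. This should reduce the analysis to $F$ an irreducible, or a direct sum of two irreducible, $\F_pT$-modules. The irreducible modules of these small groups over $\F_2$ and $\F_3$ can then be checked by hand.
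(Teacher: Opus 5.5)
Your Clifford-theoretic setup for the character-quadratic part matches the paper's, but you stop one step short and misidentify the difficulty. The ``main obstacle'' you isolate does not exist. The plan you sketch to overcome it is never carried out: it consists of Galois-orbit computations and a reduction to $F$ irreducible or a sum of two irreducibles, and that reduction is not justified. The bound $\Q(\chi)\subseteq\Q_{6|H_0|}$ is far too crude. It would also produce $\Q_{18}$ and $\Q_{36}$ when $|H_0|\in\{3,6\}$, so your list of problematic fields is not even consistent with it.

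The missing observation is that $\hat\theta\mu$ is a \emph{linear} character of $I=FH_0$. So if $U$ is its kernel, then $I/U=(FU/U)(H_0U/U)$ is cyclic. Here $|FU/U|=|F/\ker{\theta}|$ divides $6$ (and equals $2$ in type (d), where $F$ is a $2$-group), and $|H_0U/U|$ divides $|H|$. In a cyclic group, two subgroups generate a subgroup whose order is the least common multiple of their orders. Hence $|I/U|$ divides $6$ (resp.\ $4$ in type (d)), and $\Q(\chi)\subseteq\Q(\hat\theta\mu)\subseteq\Q_6$ (resp.\ $\Q_4$). That is the whole argument in the paper; $\Q_8$ and $\Q_{12}$ never arise.

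On the semi-rational side, your treatment of $x\in\fitd G\setminus F$ is correct: conjugating $x$ into $K$ using $\cent F y=1$ is a nice alternative to the paper's vanishing argument. The rest, however, rests on a wrong list of element orders and contains no actual argument. By \Cref{dimensions}, if $h\in H$ has order $3$ then $\oh 3F$ is a free $\F_3\langle h\rangle$-module. So elements $fh$ of order $9$ exist in types (e)--(h) as soon as $3\mid|F|$, e.g.\ in the example of type (f) in \Cref{existence}. Similarly there are elements of order $18$ when moreover $2\mid|F|$, of order $36$ in types (g), (h) when $6\mid|F|$, and of order $12$ in type (b) when $2\mid|F|$. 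For these one needs $|\bg Gx|\ge 3$ or $6$, not merely $2$. Your description of the order-$12$ elements in (g), (h) as $zh$ with $z\in\cent Fh$ is also wrong, since such elements have order dividing $6$.

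The paper sidesteps all of this by arguing column-wise, once $\Q(\chi)\subseteq\Q_6$ (resp.\ $\Q_4$) is known for every $\chi$ with $F\not\subseteq\ker{\chi}$. For $x\in F$, the characters of $G/F$ take integer values at $x$. For $x\notin\fitd G$, the characters of $G/F$ not containing $\fitd G$ in their kernel vanish at $x$, and the others are linear characters of $G/\fitd G\cong C_m$ with values in $\Q_m$. Hence $\Q(x)\subseteq\Q_6$ (resp.\ $\Q_4$), and together with your first case this would complete the proof.
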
 

\begin{proof} Let us write $G$ as $F\rtimes T$, where $T$ is a complement for $F$ in $G$: since $T$ is a Frobenius group, denoting by $K$ and $H$ the Frobenius kernel and a Frobenius complement of $T$, we have $G=F\rtimes(K\rtimes H)$. Note that, by our assumptions, $H\cong G/\fitd G$ is a cyclic group of order lying in $\{2,3,4,6\}$, where the value $4$ occurs only if $G$ is ``of type (d)'' (i.e., as in (d) of \Cref{2FrobQRSR}). 

Consider an irreducible character $\chi$ of $G$ whose kernel does not contain $F$, and let $\lambda$ be a (non-trivial) irreducible constituent of $\chi_F$. Setting $I=I_G(\lambda)$, we have $I=FC$ where $C=I_T(\lambda)$; since $F\rtimes K$ is a Frobenius group and $\lambda\in\irr F$ is non-trivial, we see that $C$ intersects $K$ trivially, and therefore (up to replacing $H$ by a suitable $T$-conjugate of it) we can assume $C\subseteq H$. It follows that $C$ is cyclic, and $|C|\in\{1,2,3,6\}$ if $G$ is not of type (d), whereas $|C|\in\{1,2,4\}$ if $G$ is of type (d). Now, as $I/F\cong C$ is cyclic, the character $\lambda$ has an extension to $I$ by \cite[Theorem~22.3(b)]{Hu} and, by Gallagher's Theorem, every character in $\irr{I}$ lying over $\lambda$ is linear and in fact an extension of $\lambda$. Clifford's Correspondence yields that $\chi$ is induced by one of these, so let $\theta\in\irr{I}$ over $\lambda$ be such that $\chi=\theta^G$; clearly we get $\Q(\chi)\subseteq\Q(\theta)$, and our aim is to get control of the latter field. 

To this end, set $U=\ker{\theta}$ and consider the factor group $I/U=CU/U\cdot FU/U$: since $\theta$ is a linear character (hence, a group homomorphism from $I$ to the multiplicative group of the complex field), $I/U$ is a cyclic group. Taking into account that $FU/U\cong F/(U\cap F)=F/\ker{\lambda}$ is a non-trivial cyclic subgroup of an abelian $\{2,3\}$-group with squarefree exponent, we get that $FU/U$ is isomorphic to one among $C_2$, $C_3$ and $C_6$ (but only the first possibility is allowed if $G$ is of type (d)). It easily follows that $I/U$ is isomorphic to a subgroup of $C_6$ if $G$ is not of type (d), whereas it is isomorphic to a subgroup of  $C_4$ if $G$ is of type (d). 

Finally, $\theta$ being a faithful linear character of $I/U$, we deduce that $\Q(\theta)$ is a subfield of $\Q_6$ if $G$ is not of type (d), and it is a subfield of $\Q_4$ if $G$ is of type (d). In any case we get $[\Q(\chi):\Q]\leq 2$, and this holds for any choice of $\chi\in\irr G$ whose kernel does not contain $F$. On the other hand, for every type of $G$ the factor group $G/F$ is character-quadratic, thus the conclusion $[\Q(\chi):\Q]\leq 2$ holds also when $\chi$ is an irreducible character of $G$ whose kernel does contain $F$. This completes the proof that $G$ is character-quadratic.

We proceed now to show that $G$ is also semi-rational. Let us partition the set $\irr G$ into three parts as follows: we set $$\Gamma_1=\{\chi\in\irr G\mid \ker{\chi}\supseteq FK\}, \;\;\; \Gamma_2=\{\chi\in\irr G\mid \ker{\chi}\supseteq F{\text{ but }}\ker{\chi}\not\supseteq FK\}$$ and  $$\Gamma_3=\{\chi\in\irr G\mid\ker{\chi}\not\supseteq F\}.$$ If $x$ is an element of $F$, then all characters in $\Gamma_1\cup\Gamma_2$ take an integral value on $x$. Since, in view of our previous discussion, for every character $\chi$ in $\Gamma_3$ we have $\Q(\chi)\subseteq \Q_6$ if $G$ is not of type~(d) and $\Q(\chi)\subseteq \Q_4$ if $G$ is of type (d), we deduce that $\Q(x)\subseteq\Q_6$ or,  respectively, $\Q(x)\subseteq\Q_4$. In any case, $x$ is a semi-rational element.

Suppose now that $x$ lies in $FK\setminus F$. In this case, we claim that every character $\chi\in\Gamma_3$ vanishes on $x$. In fact, recall that $\chi$ is induced from a subgroup $I$ of the form $FC$, where $C=I_T(\lambda)\subseteq H$ for a suitable non-trivial $\lambda\in\irr F$. As a consequence, $\chi$ vanishes on every element not lying in at least one $G$-conjugate of $I$ (note that, since $I$ is normalized by $FH$, the $G$-conjugates of $I$ can be realized by means of elements in $K$). If, arguing by contradiction, we assume $\chi(x)\neq 0$, then there exists $k\in K$ such that $x$ lies in $I^k=FC^k\subseteq FH^k$. But $FK/F$ and $FH^k/F$ intersect trivially, thus we get the contradiction that $x$ lies in $F$. Our claim is then proved and, taking into account that $G/F$ is a semi-rational group, we get $[\Q(x):\Q]\leq 2$ in this case as well.

Finally, let $x$ be in $G\setminus FK$. Then every character of $\Gamma_2$ vanishes on $x$ and, for every $\chi\in\Gamma_1$, we have that $\chi(x)$ lies in $\Q_6$ if $G$ is not of type (d), whereas it lies in $\Q_4$ if $G$ is of type (d). Since, as we have seen, the same conclusions hold for every $\chi\in\Gamma_3$, also in this case we get $[\Q(x):\Q]\leq 2$ and the proof is complete.
\end{proof} 

Note that \Cref{2FrobQRSR} and \Cref{2FrobConverse} provide a complete classification of the character-quadratic or semi-rational $2$-Frobenius groups having a trivial Frattini subgroup. In this setting, the two classes of groups turn out to coincide.

\begin{remark}\label{existence}
To close with, we point out that there actually exist groups for each of the types (a)-(h) in \Cref{2FrobQRSR}. They can be easily constructed as subgroups of suitable affine semilinear groups ${\rm{A\Gamma}}(p^n)$ (see \cite[Chapter 1, Section 2]{MW}), as follows.
\begin{enumeratei}
\item ${\rm{S}}_4={\rm{A\Gamma}}(2^2)\cong C_2^2\rtimes(C_3\rtimes C_2)$ is of type (a).
\item ${\rm{A\Gamma}}(3^4)$ has a subgroup $G\cong C_3^4\rtimes(C_5\rtimes C_2)$ of type (b).
\item ${\rm{A\Gamma}}(2^4)$ has a subgroup $G\cong C_2^4\rtimes(C_5\rtimes C_2)$ of type (c).
\item ${\rm{A\Gamma}}(2^4)$ has a subgroup $G\cong C_2^4\rtimes(C_5\rtimes C_4)$ of type (d).
\item $G={\rm{A\Gamma}}(2^3)\cong C_2^3\rtimes(C_7\rtimes C_3)$ is of type (e).
\item ${\rm{A\Gamma}}(3^6)$ has a subgroup $G\cong C_3^6\rtimes(C_7\rtimes C_3)$ of type (f).
\item ${\rm{A\Gamma}}(2^6)$ has a subgroup $G\cong C_2^6\rtimes(C_7\rtimes C_6)$, and ${\rm{A\Gamma}}(3^6)$ has a subgroup \newline $G\cong C_3^6\rtimes(C_7\rtimes C_6)$, both of type (g). 
\item ${\rm{A\Gamma}}(2^{12})$ has a subgroup $G\cong C_2^{12}\rtimes(C_{13}\rtimes C_6)$, and ${\rm{A\Gamma}}(3^6)$ has a subgroup \newline $G\cong C_3^6\rtimes(C_{13}\rtimes C_6)$, both of type (h). 
\end{enumeratei}

\noindent The corresponding classification of graphs that are realized as GK-graphs of character-quadratic or semi-rational $2$-Frobenius groups is then displayed in Table~\ref{GraphsOf2Frob}.

\begin{table}\label{2Frob}
\setlength{\tabcolsep}{1mm} 
\def\arraystretch{0.75} 
\centering
\begin{tabular}{|c|c|c|c|c|c|}

\hline

\begin{tikzpicture}
\draw[fill=black] (1/2,0) circle (2pt);
\draw[fill=black] (3/2,0) circle (2pt);

\node at (1/2,-0.5) {$2$};
\node at (1.5,-0.5) {$3$};
\end{tikzpicture}

 &  \begin{tikzpicture}
\draw[fill=black] (0.5,0) circle (2pt);
\draw[fill=black] (3/2,0) circle (2pt);

\node at (0.5,-0.5) {$2$};
\node at (1.5,-0.5) {$5$};

\end{tikzpicture}

&  \begin{tikzpicture}
\draw[fill=black] (0.5,0) circle (2pt);
\draw[fill=black] (3/2,0) circle (2pt);

\node at (0.5,-0.5) {$3$};
\node at (1.5,-0.5) {$7$};
\end{tikzpicture}

&\begin{tikzpicture}
\draw[fill=black] (0,0) circle (2pt);
\draw[fill=black] (0.7,0) circle (2pt);
\draw[fill=black] (0,-0.7) circle (2pt);
\node at (0,0.5) {$2$};
\node at (0.7,0.5) {$3$};
\node at (0,-1) {$5$};
\draw[thick]  (0,0) -- (0.7,0);
\end{tikzpicture}

&\begin{tikzpicture}
\draw[fill=black] (0,0) circle (2pt);
\draw[fill=black] (0.7,0) circle (2pt);
\draw[fill=black] (0,-0.7) circle (2pt);
\node at (0,0.5) {$2$};
\node at (0.7,0.5) {$3$};
\node at (0,-1) {$7$};
\draw[thick]  (0,0) -- (0.7,0);
\end{tikzpicture}

&\begin{tikzpicture}
\draw[fill=black] (0,0) circle (2pt);
\draw[fill=black] (0.7,0) circle (2pt);
\draw[fill=black] (0,-0.7) circle (2pt);
\node at (0,0.5) {$2$};
\node at (0.7,0.5) {$3$};
\node at (0,-1) {$13$};
\draw[thick]   (0,0) -- (0.7,0);

\end{tikzpicture}

\\\quad\quad(a)\quad\quad\quad &\quad\quad(c,d) \quad\quad\quad& \quad\quad(f) \quad\quad\quad& \quad\quad(b) \quad\quad\quad& \quad\quad(e,g)\quad\quad\quad & \quad\quad(h)\quad\quad\quad

\\ \hline

\end{tabular}

\bigskip

\caption{\label{GraphsOf2Frob} GK-graphs of character-quadratic or semi-rational $2$-Frobenius groups}
\end{table}

\end{remark}

\section{Disconnected Gruenberg-Kegel graphs}\label{SectionDisconnected}

With the discussion in the previous section and the results in \cite{PV}, we can now classify the disconnected graphs that arise as GK-graphs of character-quadratic or semi-rational solvable groups. Note that, by \cite{PV}, in the class of Frobenius groups the property of being character-quadratic and the property of being semi-rational are equivalent.

\begin{theorem}\label{DiscClassification} Let $G$ be a solvable Frobenius group that is character-quadratic or semi-rational. Then $\ngk{G}$ is one of the graphs appearing in Table~$\ref{Disconected}$. Conversely, for every graph $\Gamma$ in Table~$\ref{Disconected}$, there exists a solvable Frobenius group $G$ that is character-quadratic and semi-rational with $\ngk G=\Gamma$.
\end{theorem}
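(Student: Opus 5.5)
The first part of the statement should follow directly from the classification of character-quadratic or semi-rational Frobenius groups in the Main Theorem of \cite{PV}; recall that this theorem also shows the two properties coincide for Frobenius groups. The plan is to combine that classification with the standard description of the GK-graph of a Frobenius group $G=K\rtimes H$. If $x$ is a nontrivial element of $K$, then $\cent G x\subseteq K$; if $h$ is a nontrivial element of a complement, then $\cent G h$ lies in that complement. Hence there is no edge between $\pi(K)$ and $\pi(H)$, and the two connected components of $\ngk G$ (\Cref{2FrobGK}) are $\ngk K$ and $\ngk H$. Since $K$ is nilpotent, $\ngk K$ is a complete graph on $\pi(K)$. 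Thus the graph is determined by $\pi(K)$, $\pi(H)$ and the element orders of $H$.

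For the direct implication, I would run through the list of \cite{PV} and read off these data. The prime sets are constrained by \Cref{primespectrum} and by \Cref{VerticesAndEdges}, which force $2$ or $3$ to appear whenever $5$, $7$ or $13$ does. I also use that Frobenius complements have cyclic Sylow subgroups for odd primes and cyclic or generalized quaternion Sylow $2$-subgroups. Together with \Cref{Sylow}, this restricts the possible complements to a short list: $C_2$, $C_3$, $C_4$, $C_6$, $Q_8$, ${\rm SL}(2,3)$, and a dicyclic group of order $20$. Matching each kernel with the admissible complements should produce exactly the seven graphs of Table~\ref{Disconected}. The main obstacle is bookkeeping: checking that no further combination survives, for instance a kernel of order divisible by $7$ or $13$ together with a complement whose order is divisible by $5$. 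I would exclude such combinations by the field-of-values condition: for an element $x$ of prime order $p$ in the kernel, $|\bg G x|$ must be at least $(p-1)/2$ and must divide $|H|$.

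For the converse, I would exhibit explicit Frobenius groups, checking each one either against the list in \cite{PV} or directly.
\begin{itemize}
\item $(2\quad 3)$: ${\rm S}_3$.
\item $(2\quad 5)$: $C_5\rtimes C_4$.
\item $(3\quad 7)$: $C_7\rtimes C_3$.
\item $(2-3\quad 7)$: $C_7\rtimes C_6$.
\item $(2-3\quad 13)$: $C_{13}\rtimes C_6$.
\item $(2-3\quad 5)$: $C_5^2\rtimes C_6$, where $C_6\le\F_{25}^{\times}$ acts fixed-point-freely on $\F_{25}$.
\item $(2-5\quad 3)$: $C_3^4\rtimes {\rm Dic}_{20}$, where the dicyclic group of order $20$ lies inside ${\rm SL}(2,9)$ and acts fixed-point-freely on $\F_9^2$.
\end{itemize}

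The direct verification of the quadratic conditions is routine. A character whose kernel contains $K$ is a character of $H$; here one checks that all characters of $H$ have values in $\Q_4$, $\Q_3$ or $\Q(\sqrt5)$. Every other irreducible character is induced from a nontrivial character $\lambda$ of $K$ and has values that are sums of $\lambda$ over $H$-orbits. The field of such a character has degree at most $2$ because the scalars in $H$ have index at most $2$ in the relevant Galois group. For example, in $C_5^2\rtimes C_6$ the $H$-orbits of $\lambda$ are stable under the scalars $\pm1$, which form an index-$2$ subgroup of $(\mathbb Z/5)^{\times}$. Semi-rationality follows in the same way, from $|\bg G x|\ge\phi(|x|)/2$ for each element $x$.
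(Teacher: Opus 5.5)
\paragraph{Direct implication.} Here you do what the paper does: its entire proof is to read the graphs off the Main Theorem of \cite{PV}. However, the auxiliary claim you insert is false, and it does not follow from the Sylow argument you give. The Sylow structure of Frobenius complements together with \Cref{Sylow} does not cut the complements down to $C_2,C_3,C_4,C_6,Q_8,{\rm SL}(2,3),\mathrm{Dic}_{20}$. Three counterexamples:
\begin{itemize}
\item $\mathrm{Dic}_{12}\le{\rm SL}(2,5)$ acting on $\F_5^2$;
\item $Q_{16}\le{\rm SL}(2,9)$ acting on $\F_9^2$ (note that $Q_{16}$ is explicitly permitted by \Cref{Sylow}(c));
\item the binary octahedral group ${\rm SL}(2,3).C_2\le{\rm SL}(2,25)$ acting on $\F_{25}^2$.
\end{itemize}
Each gives a Frobenius group that is character-quadratic and semi-rational. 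A subgroup of ${\rm SL}(2,q)$ of order prime to $q$ acts fixed-point-freely. Its unique involution acts as $-1$, so every $H$-orbit on $\irr K\setminus\{1_K\}$ and on $K\setminus\{1\}$ is closed under inversion. This gives character fields inside $\Q(\sqrt{-3})$ or $\Q(\sqrt5)$, and $|\bg Gv|\ge 2$ for $1\ne v\in K$. The complements themselves are character-quadratic and semi-rational.

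For the same reason, your $\bg Gx$ argument falls short of what you need. It only shows that a kernel prime $7$ (resp.\ $13$) forces an element of order $3$ (resp.\ $6$) in $H$. Ruling out an additional prime $5$ in $|H|$, or a kernel with two prime divisors, still requires the actual list in \cite{PV}, not your short list.

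\paragraph{Why the conclusion survives.} A solvable Frobenius complement has complete GK-graph: its unique involution is central and its subgroups of order $rs$ are cyclic. Hence $\ngk G$ is the disjoint union of the cliques on $\pi(K)$ and $\pi(H)$. All the complements you omitted are $\{2\}$- or $\{2,3\}$-groups, so they give graphs already in Table~\ref{Disconected}. The direct part is therefore correct, provided you really take the possible pairs $(\pi(K),\pi(H))$ from \cite{PV} rather than from your derived list.

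\paragraph{Converse.} Here you take a different, more self-contained route than the paper, which again simply cites \cite{PV}. Your seven explicit groups are Frobenius groups with the stated graphs, and the verification you sketch is correct. The characters of $H$ lie in $\Q_4$, $\Q_3$ or $\Q(\sqrt5)$. The induced characters are controlled by the scalars in $H$. For $x\in K$, $\bg Gx$ contains the scalars, and $\bg Gx=\bg Hx$ for $1\ne x\in H$. For instance, $-1\in C_6\le\F_{25}^\times$ gives values in $\Q(\sqrt5)$. Likewise $\mathrm{Dic}_{20}\le{\rm SL}(2,9)$ has order prime to $3$, so it acts fixed-point-freely on $\F_9^2$. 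It is semi-rational, with character fields $\Q(i)$ and $\Q(\sqrt5)$.
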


\begin{proof} This follows at once from the classification of character-quadratic (i.e., semi-rational) Frobenius groups appearing in the main theorem of \cite{PV}.
\end{proof}

Comparing Table~\ref{Disconected} and Table~\ref{GraphsOf2Frob}, it turns out that all the graphs arising as GK-graphs of character-quadratic or semi-rational 2-Frobenius groups are also realized by character-quadratic (i.e. semi-rational) solvable Frobenius groups. On the other hand, considering the group of order~$20$ $$H=\langle x,y\mid x^5=y^4=1, x^y=x^{-1} \rangle,$$ among the character-quadratic or semi-rational Frobenius or $2$-Frobenius groups, only the Frobenius groups of the kind $C_3^{4n}\rtimes H$ have the fourth graph in Table~\ref{Disconected} as their GK-graph.  

Taking into account also \Cref{2FrobGK}, the following corollary is derived at once.

\begin{corollary}
Let $\Gamma$ be a disconnected graph. Then $\Gamma$ is the Gruenberg-Kegel graph of a solvable group that is character-quadratic or semi-rational if and only if $\Gamma$ is one of the graphs appearing in Table~$\ref{Disconected}$.
\end{corollary}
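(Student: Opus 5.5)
The corollary is an if-and-only-if, and I would prove the two directions separately. The main tool is \Cref{2FrobGK}: a solvable group has disconnected GK-graph exactly when it is a Frobenius group or a $2$-Frobenius group.

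For the forward direction, let $G$ be solvable and character-quadratic or semi-rational with $\ngk G=\Gamma$ disconnected. By \Cref{2FrobGK}, $G$ is either a Frobenius group or a $2$-Frobenius group.
\begin{itemize}
\item If $G$ is Frobenius, \Cref{DiscClassification} puts $\Gamma$ in Table~\ref{Disconected}.
\item If $G$ is $2$-Frobenius, \Cref{2FrobQRSR} shows that $\Gamma$ is one of $(2\quad 3)$, $(2-3\quad 5)$, $(2\quad 5)$, $(2-3\quad 7)$, $(3\quad 7)$ or $(2-3\quad 13)$. These are exactly the graphs in Table~\ref{GraphsOf2Frob}.
\end{itemize}
To finish this direction I would compare the labelled graphs directly and check that each entry of Table~\ref{GraphsOf2Frob} occurs in Table~\ref{Disconected}. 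This is immediate, since the six graphs are the first, second, third, fifth, sixth and seventh entries there.

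For the converse, take $\Gamma$ in Table~\ref{Disconected}. The second part of \Cref{DiscClassification} gives a solvable Frobenius group $G$ that is character-quadratic and semi-rational with $\ngk G=\Gamma$. This realizes $\Gamma$.

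There is no real obstacle. The only point needing care is the bookkeeping that matches the labelled graphs between the two tables.
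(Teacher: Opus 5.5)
Your proposal is correct and follows the paper's own argument. The paper likewise uses \Cref{2FrobGK} to split into the Frobenius case, handled by \Cref{DiscClassification}, and the $2$-Frobenius case, handled by \Cref{2FrobQRSR}, whose six graphs (Table~\ref{GraphsOf2Frob}) all appear in Table~\ref{Disconected}; the converse is realized by the Frobenius groups from \Cref{DiscClassification}, and your matching of the table entries is accurate.
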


\section{Connected Gruenberg-Kegel graphs: at most three vertices}\label{SectionThreeVertices}

In this section we begin our analysis of connected graphs that arise as GK-graphs of a character-quadratic or semi-rational solvable group, and we consider the case when the number of vertices is at most $3$. As already mentioned, no example is known of a semi-rational solvable group whose order is divisible by $17$; nevertheless, the prime $17$ could in principle show up and, in view of this, the semi-rational solvable groups that we will consider are assumed to have an order not divisible by $17$. 

The following easy observation handles the case of one-vertex graphs.

\begin{proposition}\label{one} Let $G$ be a character-quadratic or semi-rational group, and assume that $\ngk G$ has a single vertex. Then this vertex is either $2$ or $3$ (i.e., $G$ is either a $2$-group or a $3$-group). Conversely, the one-vertex graphs with vertex $2$ or $3$ both arise as the Gruenberg-Kegel graph of a suitable character-quadratic and semi-rational (solvable) group.
\end{proposition}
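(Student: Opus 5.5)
If $\ngk G$ has a single vertex $p$, then $G$ is a nontrivial $p$-group, hence nilpotent and in particular solvable. I will rule out every prime $p\ge 5$ and then exhibit examples for $p=2$ and $p=3$.

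For the first part I will use \Cref{VerticesAndEdges}, which shows that some primes force others into $\pi(G)$, and this is impossible when $\pi(G)=\{p\}$ with $p\ge 5$.
\begin{itemize}
\item If $p\equiv 1 \mod 4$, then \Cref{VerticesAndEdges}(a) gives $2\in\pi(G)$, a contradiction.
\item If $p\equiv 1 \mod 6$, then \Cref{VerticesAndEdges}(b) gives $3\in\pi(G)$, again a contradiction.
\end{itemize}
For an arbitrary prime $p\ge 5$ I will argue directly, since neither congruence need hold. Take $x\in G$ of order $p$. If $G$ is semi-rational, $x$ is semi-rational in $G$. If $G$ is character-quadratic, $x$ is semi-rational in $G$ by \Cref{semiratpel}. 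In either case $|\bg G x|\ge (p-1)/2$, and this number is at least $2$ because $p\ge 5$. On the other hand, $\bg G x$ is a section of the $p$-group $G$, so its order is a power of $p$, while it also divides $p-1$. Hence $\bg G x$ is trivial, a contradiction. This argument in fact covers all $p\ge 5$ at once, so the case split above is not needed. We conclude that $p\in\{2,3\}$, i.e. $G$ is a $2$-group or a $3$-group.

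For the converse I need explicit examples.
\begin{itemize}
\item For $p=2$, take $C_2$: it is rational, hence both character-quadratic and semi-rational.
\item For $p=3$, take $C_3$. By \Cref{qrfacts}(b), an abelian group whose element orders lie in $\{1,2,3,4,6\}$ is both character-quadratic and semi-rational, and $C_3$ satisfies this.
\end{itemize}
In both cases the GK-graph is a single vertex labelled $2$ or $3$, as required.

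There is no real obstacle here. The only point to watch is that, in the character-quadratic case, semi-rationality of the element $x$ of prime order $p$ must be invoked through \Cref{semiratpel}.
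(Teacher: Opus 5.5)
Your proof is correct, but it takes a different route from the paper. The paper passes to the Frattini quotient. There $G/\frat G$ is a nontrivial elementary abelian $p$-group, which is again character-quadratic or semi-rational by \Cref{qrfacts}(a). The description of abelian such groups in \Cref{qrfacts}(b), with element orders in $\{1,2,3,4,6\}$, then forces $p\in\{2,3\}$ immediately.

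You instead stay inside $G$ and work with a single element $x$ of order $p\geq 5$. Semi-rationality of $x$ (by hypothesis, or via \Cref{semiratpel} in the character-quadratic case) gives $|\bg G x|\geq (p-1)/2>1$. On the other hand, $\bg G x$ is a $p$-group embedded in $\aut{\langle x\rangle}\cong C_{p-1}$, so it is trivial.

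The two routes buy different things. The paper's argument only needs that both properties pass to quotients and that $[\Q_n:\Q]\leq 2$ exactly when $n\in\{1,2,3,4,6\}$. It also shows more generally that every abelian quotient of such a group has exponent in $\{1,2,3,4,6\}$. Your argument uses the less elementary \Cref{semiratpel} in the character-quadratic case. In exchange, it is the same mechanism as \Cref{VerticesAndEdges}(a),(b), and it gives a different general fact: for every odd prime $p\in\pi(G)$, all prime divisors of $(p-1)/2$ lie in $\pi(G)$.

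As you noted yourself, the preliminary case split via \Cref{VerticesAndEdges} is superfluous; it would in any case miss $p\equiv 11 \mod 12$. The converse with $C_2$ and $C_3$ is exactly as in the paper.
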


\begin{proof} Let $G$ be a character-quadratic (respectively, semi-rational) group, and assume that $\ngk G$ has a single vertex $p$. Then $G$ is a $p$-group and, by \Cref{qrfacts}(a), the elementary abelian $p$-group $G/\frat G$ is character-quadratic (respectively, semi-rational). Moreover, by \Cref{qrfacts}(b), the order of any element of $G/\frat G$ lies in the set $\{1,2,3,4,6\}$, which clearly yields $p\in\{2,3\}$. As for the converse statement, it is enough to consider the groups $C_2$ and $C_3$.
\end{proof}

We move next to connected graphs with two vertices.

\begin{proposition}\label{two} Let $G$ be a character-quadratic or semi-rational group and, in the latter case, assume that $17$ is not a divisor of $|G|$. Assume also that $\ngk G$ is a connected graph with two vertices. Then the vertex set of $\ngk G$ is either $\{2,3\}$ or $\{2,5\}$. Conversely, the connected graphs with vertex sets $\{2,3\}$ or $\{2,5\}$ both arise as the Gruenberg-Kegel graph of a suitable character-quadratic and semi-rational (solvable) group.
\end{proposition}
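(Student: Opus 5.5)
The plan is to first pin down the possible vertex sets using \Cref{primespectrum} and \Cref{VerticesAndEdges}, and then to rule out the one candidate those results do not exclude. Since $|G|$ has only two prime divisors, Burnside's $p^aq^b$-theorem makes $G$ solvable. By \Cref{primespectrum}, together with the hypothesis $17\nmid|G|$ in the semi-rational case, the two vertices lie in $\{2,3,5,7,13\}$.

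Next I apply \Cref{VerticesAndEdges}(a),(b):
\begin{itemize}
\item If $5$ or $13$ is a vertex, then $2$ is a vertex.
\item If $7$ or $13$ is a vertex, then $3$ is a vertex.
\end{itemize}
A set containing $13$ would therefore contain $\{2,3,13\}$, which has three elements, so $13$ is excluded. A set containing $5$ must be $\{2,5\}$. A set containing $7$ must be $\{3,7\}$, and a set containing neither $5$, $7$ nor $13$ is $\{2,3\}$. So everything reduces to excluding the connected graph $3-7$.

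For $\{3,7\}$, the group $G$ has odd order and, by connectedness, contains an element $x$ of order $21$. In the semi-rational case, $[\Q(x):\Q]\le 2$ forces $|\bg G x|=\phi(21)/[\Q(x):\Q]\ge 6$. But $\bg G x$ is a section of $G$, so it has odd order, and it is a subgroup of $\aut{\langle x\rangle}\cong C_2\times C_6$. Hence $|\bg G x|\le 3$, a contradiction.

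In the character-quadratic case I cannot assume that $x$ is semi-rational, because \Cref{semiratpel} does not cover order $21$. This is the step I expect to be the main obstacle. My approach is to consider the action of $\Gal(\Q_{21}/\Q)$ on the irreducible characters and on the conjugacy classes of generators of $\langle x\rangle$. Since $|\bg G x|=3$, there are exactly four such classes, and the quotient $\Gal(\Q_{21}/\Q)/\bg G x\cong C_2\times C_2$ permutes them regularly.
\begin{itemize}
\item Every $\chi\in\irr G$ is fixed by one of the three index-$2$ subgroups of this quotient.
\item Every nontrivial $\chi$ is non-real, because $G$ has odd order, so complex conjugation moves it.
\end{itemize}
Using Brauer's permutation lemma on the submatrix of the character table formed by the characters and the classes of the generators of $\langle x\rangle$, I would try to show that some nontrivial Galois element fixes all characters but not these classes. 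That would give a contradiction.

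If this direct count does not close the argument, I would first pass to a suitable quotient, such as $G/\frat G$ or a quotient by a normal Sylow subgroup. In that quotient I would construct, in the spirit of \Cref{FieldsOfValues2Frob}, an explicit induced character whose field of values is $\Q_{21}$, which has degree $4$. The alternative is to invoke the known results on odd-order character-quadratic groups from \cite{tentquadraticrat}.

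For the converse, the graph $2-3$ is realized by $C_6$, which is both character-quadratic and semi-rational by \Cref{qrfacts}(b). The graph $2-5$ is realized by $(C_5\rtimes C_4)\times C_2$, where $C_4$ acts faithfully on $C_5$. This group is rational: the Frobenius group of order $20$ is rational, and so is $C_2$. It has elements of order $10$, so its GK-graph is the connected graph on $\{2,5\}$.
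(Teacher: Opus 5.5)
\textbf{Gap: the character-quadratic case of the vertex set $\{3,7\}$ is not proved.} The rest of your argument is sound. The reduction to the candidate vertex sets $\{2,3\}$, $\{2,5\}$, $\{3,7\}$ via \Cref{VerticesAndEdges}(a),(b) is correct. So is your exclusion of $3-7$ when $G$ is semi-rational: $\bg G x$ would need order at least $\phi(21)/2=6$, while odd-order subgroups of $\aut{C_{21}}\cong C_2\times C_6$ have order at most $3$.

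For the character-quadratic case you only list strategies you ``would try'', so the proof is incomplete exactly at the step you flagged. The plan also fails as stated. Brauer's permutation lemma concerns the whole character table, not the submatrix on the classes of generators of $\langle x\rangle$. Moreover, an automorphism fixing every irreducible character fixes every class. Concretely, suppose both $\Q(\sqrt{-3})$ and $\Q(\sqrt{-7})$ occur as fields of values. Then the automorphisms fixing all characters act on $\Q_{21}$ through the subgroup of order $3$ of $\Gal(\Q_{21}/\Q)$. Since $\Q(x)\subseteq\Q(\sqrt{-3},\sqrt{-7})$, that subgroup is precisely $\bg G x$, so these automorphisms fix the classes of all generators of $\langle x\rangle$ and no contradiction arises. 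You also took $|\bg G x|=3$ for granted.

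\textbf{The missing idea.} By Burnside, a group of odd order has no nontrivial real-valued irreducible character. Hence if $G$ is character-quadratic of odd order, every $\Q(\chi)$ is $\Q$ or an imaginary quadratic field. By the characterization of \texttt{cut} groups recalled in \Cref{Preliminaries} (condition (3)), $G$ is then inverse semi-rational, hence semi-rational, and your count with $\bg G x$ finishes the argument.

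If you prefer a Brauer-type argument, use orbit counts. On $\irr G$ the orbits of the full Galois group coincide with those of complex conjugation. By Brauer's permutation lemma the two groups therefore have the same number of orbits on conjugacy classes. Each orbit of the larger group is a union of orbits of the smaller one, so the orbits coincide, i.e.\ every generator of $\langle x\rangle$ is conjugate to $x$ or $x^{-1}$.

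This is the paper's route. It quotes \cite[Proposition~2.5]{PV} for ``odd order and character-quadratic implies \texttt{cut}'', then \cite[Theorem~A]{BKMdR} to exclude a connected graph on two odd primes. Your elementary $\bg G x$ argument is a self-contained replacement for that second citation.

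\textbf{Converse.} $C_6$ is fine. However, $C_5\rtimes C_4$ with faithful action is \emph{not} rational: it has $C_4$ as a quotient, so it has linear characters taking the value $i$. Your example $(C_5\rtimes C_4)\times C_2$ still works, because all its character fields lie in $\Q(i)$. It is therefore \texttt{cut}, hence both character-quadratic and semi-rational; only the justification needs fixing. The paper simply cites \cite[Theorem~A]{BKMdR} here.
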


\begin{proof} Let $G$ be as in our hypothesis, and let $p$, $q$ denote the two vertices of $\ngk G$. Note that $G$ is solvable, since its order is divisible by two primes only. By \Cref{primespectrum}, we have that $p$ and $q$ lie in the set $\{2,3,5,7,13\}$. If $G$ has odd order, then by Proposition~2.5 of \cite{PV} we know that $G$ is a \texttt{cut} group; but Theorem~A of \cite{BKMdR} yields that no \texttt{cut} group has a connected GK-graph whose vertex set consists of two odd primes. Therefore we can assume $p=2$. Now, if $q\neq 5$, \Cref{VerticesAndEdges}(b) yields that $q$ is necessarily $3$.

The converse statement follows from \cite[Theorem~A]{BKMdR}, which ensures the existence of \texttt{cut} groups whose GK-graphs are as wanted.
\end{proof}


Finally, we complete the discussion of this section considering connected graphs with three vertices. 

\begin{theorem}\label{three} Let $G$ be a solvable group that is character-quadratic or semi-rational and, in the latter case, assume that $17$ is not a divisor of $|G|$. Assume also that $\ngk G$ is a connected graph with three vertices. Then $\ngk G$ is one of the graphs in {\rm {(j), (k), (l), (n), (o), (p), (r), (s), (t)}} of Table~${\ref{Atmost3}}$. Conversely, each of these graphs arises as the Gruenberg-Kegel graph of a suitable character-quadratic and semi-rational solvable group, with the possible exception of {\rm {(s)}}. 
\end{theorem}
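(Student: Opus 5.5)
The plan has two halves: a combinatorial elimination, which gives the list, and a set of constructions, which gives the converse.

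\textbf{Restricting the vertex set.} Let $\{p,q,r\}$ be the vertex set of $\ngk G$. By \Cref{primespectrum} it is contained in $\{2,3,5,7,13\}$, since $17\nmid|G|$ in the semi-rational case.
\begin{enumerate}[(1)]
\item If $2\notin\pi(G)$, then $|G|$ is odd. By Proposition~2.5 of \cite{PV}, $G$ is then a \texttt{cut} group. Theorem~A of \cite{BKMdR} lists the GK-graphs of solvable \texttt{cut} groups, and I expect it to exclude three odd vertices, as it did for two. If not, I would use \Cref{VerticesAndEdges}(a): any vertex $5$ or $13$ forces $2\in\pi(G)$. So the only all-odd set is $\{3,7\}$, which has too few vertices.
\item Hence $2\in\pi(G)$.
\item If $7$ or $13$ is a vertex, then $3$ is a vertex by \Cref{VerticesAndEdges}(b).
\item Since $5$ and $13$ are $\equiv 1 \bmod 4$, \Cref{VerticesAndEdges}(a) also forces $2$ whenever they occur.
\end{enumerate}
This leaves the vertex sets $\{2,3,5\}$, $\{2,3,7\}$, $\{2,3,13\}$, $\{2,5,7\}$, $\{2,5,13\}$ and $\{2,7,13\}$. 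The last three are excluded: $13$ requires $3$, and $\{2,5,7\}$ would require $3$ because $7\equiv 1 \bmod 6$. So the vertex set is $\{2,3,s\}$ with $s\in\{5,7,13\}$.

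\textbf{Restricting the edges.} Connectedness together with \Cref{silviaproposition} leaves the three paths and the triangle on these vertices. The tools for removing graphs are the following.
\begin{enumerate}[(1)]
\item For $s=5$: \Cref{VerticesAndEdges}(c) with $p=5$, $t=3$ gives $3-5 \Rightarrow 2-3$. This kills the path $2-5-3$ and keeps (j), (k), (l).
\item For $s=7$: \Cref{VerticesAndEdges}(d) with $q=7$, $s=2$ gives $2-7\Rightarrow 2-3$. This kills $3-7-2$ and keeps (n), (o), (p).
\item For $s=13$: \Cref{VerticesAndEdges}(b) gives $2-3$ outright, since $13\equiv 1 \bmod 4$. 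Part (c) gives $3-13\Rightarrow 2-3$, and part (d) gives $2-13\Rightarrow 2-3$. This kills the path $2-13-3$ and keeps (r), (s), (t).
\end{enumerate}
This yields exactly the list in the statement.

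\textbf{Realizations.} For the converse I would exhibit groups and verify their properties with \Cref{2FrobConverse}-style character arguments or known examples. Direct products are useful: the product of two groups that are both character-quadratic and semi-rational, with coprime orders, keeps these properties only if the field compositum stays quadratic, so products must be chosen with care. For the candidates:
\begin{enumerate}[(1)]
\item (j) should come from a \texttt{cut} group in \cite{BKMdR}.
\item (k) and (l) should come from groups such as $C_3\times(C_5\rtimes C_4)$ or $\mathrm{S}_3\times(C_5\rtimes C_4)$ type constructions, checking the fields.
\item For $7$, I would use $\mathrm{A\Gamma}(2^3)\times C_2$-like or $\mathrm{A\Gamma}(2^3)\times C_3$-like extensions and semilinear groups.
\item For $13$, I would look inside $\mathrm{A\Gamma}(3^3)$, which has order $27\cdot 26\cdot 3$.
\end{enumerate}

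I expect the main obstacle to be the constructions: each group must be character-quadratic and semi-rational simultaneously, which requires an explicit character table or computer check. Graph (s) is the case left open.
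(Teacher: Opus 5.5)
Your classification half is correct and follows the paper's proof essentially step for step. Three odd vertices from $\{3,5,7,13\}$ must include $5$ or $13$, which contradicts \Cref{VerticesAndEdges}(a). A vertex $7$ or $13$ forces $3$ by part (b). The paths $2-5-3$, $2-7-3$, $2-13-3$ are excluded by parts (c), (d), (b) respectively. The hoped-for use of \cite{BKMdR} in the odd case and the appeal to \Cref{silviaproposition} are not needed.

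The gap is the converse. Your proposal does not establish it, and several of the suggested constructions fail.
\begin{itemize}
\item In $C_3\times(C_5\rtimes C_4)$, a faithful linear character of $C_3$ times a linear character of order $4$ of $C_5\rtimes C_4$ has field $\Q(\zeta_3,i)=\Q_{12}$. Also, the generators of a cyclic subgroup of order $12$ fall into four conjugacy classes. So this group is neither character-quadratic nor semi-rational.
\item Similarly, $G={\rm{A\Gamma}}(2^3)\times C_3$ has an irreducible character with field $\Q(\sqrt{-7},\sqrt{-3})$. It also has elements $x$ of order $21$ with $|\bg{G}{x}|=3$, so the generators of $\langle x\rangle$ lie in four classes.
\item Nothing inside ${\rm{A\Gamma}}(3^3)$ can work either. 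An element $x$ of order $13$ satisfies $|\bg{H}{x}|\le 3$ in every subgroup $H$ containing it, whereas semi-rationality of $x$ requires $|\bg{H}{x}|\ge 6$. This rules out both properties (via \Cref{semiratpel} in the character-quadratic case).
\end{itemize}

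Your ${\rm{S}}_3\times(C_5\rtimes C_4)$ for (l) and ${\rm{A\Gamma}}(2^3)\times C_2$ for (n) do work. The reason is the principle you should use throughout: a direct product with a \emph{rational} group preserves both properties. Your vaguer ``compositum'' caveat does not give this. As written, (k), (o), (p), (r), (t) are left unrealized.

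Two ingredients are missing.
\begin{itemize}
\item Theorem~A of \cite{BKMdR} already provides solvable \texttt{cut} groups realizing all of (j), (k), (l), (n), (o), (p). \texttt{Cut} (that is, inverse semi-rational) groups are both semi-rational and character-quadratic.
\item For the prime $13$, take a character-quadratic and semi-rational group $H$ with $\ngk{H}=(2-3\quad 13)$, which exists by \Cref{DiscClassification}; for instance, the Frobenius group $C_{13}\rtimes C_6$. Then $H\times C_2$ and $H\times{\rm{S}}_3$ have GK-graphs (r) and (t). They remain character-quadratic and semi-rational because $C_2$ and ${\rm{S}}_3$ are rational.
\end{itemize}
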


\begin{proof} Let $G$ be a group as in our hypothesis. If $G$ has odd order then, by \Cref{primespectrum}, $\pi(G)$ contains at least one of the primes $5$ and $13$, against \Cref{VerticesAndEdges}(a,b); therefore $2$ is a vertex of $\ngk G$. Furthermore, if $3$ is not a vertex of $\ngk G$, then $\pi(G)$ contains at least one of the primes $7$ and $13$, contradicting \Cref{VerticesAndEdges}(b). Our conclusion so far is that the vertex set of $\ngk G$ is one among $\{2,3,5\}$, $\{2,3,7\}$ or $\{2,3,13\}$.


Since the graphs $(2-5-3)$, $(2-7-3)$ and $(2-13-3)$ can be discarded again by \Cref{VerticesAndEdges}, we see that the possible GK-graphs for $G$ are those in (j), (k), (l), (n), (o), (p), (r), (s), (t) of Table~${\ref{Atmost3}}$, as claimed.


Conversely, the graphs (j), (k), (l), (n), (o), (p) are realized as GK-graphs of solvable \texttt{cut} groups (see \cite[Theorem~A]{BKMdR}). Also, let $H$ be any character-quadratic and semi-rational solvable group such that $\ngk H$ is the graph (q) (the existence of $H$ is ensured by \Cref{DiscClassification}); then it is easy to check that both $G_1=H\times C_2$ and $G_2=H\times{\rm{S}}_3$ are character-quadratic and semi-rational as well (because $C_2$ and ${\rm{S}}_3$ are rational), with $\ngk{G_1}$ and $\ngk{G_2}$ being respectively the graphs in (r) and in (t). 
\end{proof}

As of this writing, it remains unknown whether the graph in (s) can be realized or not as the GK-graph of a character-quadratic or semi-rational solvable group. 

\section{Connected Gruenberg-Kegel graphs: four vertices}\label{SectionFourVertices}

Our aim in this section is to provide some information about GK-graphs of character-quadratic or semi-rational solvable groups whose order is divisible by exactly four primes, and we obtain the following result.

\begin{theorem}\label{four}
Let $G$ be a solvable group that is character-quadratic or semi-rational. If $\ngk G$ has exactly four vertices, then it has at least four edges.
\end{theorem}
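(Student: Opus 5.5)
The first step is to reduce to a small list of candidate graphs using only the results already established. By \Cref{primespectrum}, $\pi(G)\subseteq\{2,3,5,7,13,17\}$. By \Cref{VerticesAndEdges}(a,b), the presence of any of $5,13,17$ forces $2\in\pi(G)$, and the presence of $7$ or $13$ forces $3\in\pi(G)$. A quick count then shows that any four-element vertex set must contain both $2$ and $3$, together with two primes from $\{5,7,13,17\}$.

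Next I would show that $\ngk G$ is connected. Every graph in Table~\ref{Disconected} has at most three vertices, so by \Cref{2FrobGK} and \Cref{DiscClassification} together with \Cref{2FrobQRSR} a disconnected $\ngk G$ cannot have four vertices. A connected graph on four vertices has at least three edges, so it only remains to exclude trees, namely the star $K_{1,3}$ and the path $P_4$. The star is impossible: its three leaves are pairwise non-adjacent, which contradicts \Cref{silviaproposition}. So the whole problem reduces to excluding paths $a-b-c-d$.

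For paths I would first apply the closure rules of \Cref{VerticesAndEdges}(b,c,d):
\begin{itemize}
\item if $x\in\{5,13,17\}$ is adjacent to $t\neq 2$, then $2-t\in\ngk G$;
\item if $x\in\{7,13\}$ is adjacent to $s\neq 3$, then $3-s\in\ngk G$;
\item if $13\in\pi(G)$, then $2-3\in\ngk G$.
\end{itemize}
Going through the path labelings on the vertex sets $\{2,3,p,q\}$ with $p,q\in\{5,7,13,17\}$, these rules kill most cases: any path edge between two large primes, or between a large prime and the "wrong" small prime, creates an extra edge. The surviving configurations should be those in which every large prime is a leaf attached to an allowed small prime, with $2-3$ as the middle edge, for instance $5-2-3-7$, $5-2-3-17$ or $17-2-3-7$, plus possibly a few with a large prime in the middle that satisfy all the rules.

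The main obstacle is excluding these surviving paths by genuine group theory. My plan is a minimal counterexample argument, using that quotients inherit the hypotheses (\Cref{qrfacts}(a)) and that $\ngk{G/N}$ is a subgraph of $\ngk G$. Take a minimal normal subgroup $V$, an elementary abelian $r$-group. By minimality, $G/V$ has fewer than four primes or a graph with at least four edges, and the latter cannot happen since the graph of $G/V$ is contained in the path. I would then exploit the leaf structure. Suppose, say, $5$ is adjacent only to $2$. Pick a $\{5,s\}$-Hall subgroup for $s\in\{3,7\}$: its GK-graph is disconnected, so by \Cref{2FrobGK} it is Frobenius or $2$-Frobenius. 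Apply \Cref{dimensions} on $V$ as in the proof of \Cref{silviaproposition} to produce centralized elements, and hence forbidden edges such as $5-r$ or $7-r$ for the prime $r$ of $V$. Alongside this, I would use the semi-rationality of elements of prime order (\Cref{semiratpel}) to force $|\bg G x|$ to be divisible by $2$ or $3$. Concretely, an element of order $7$ needs a $3$-element normalizing but not centralizing it, and an element of order $5$ needs a $2$-element acting as an automorphism of order $4$ or $2$. Combining this with Frobenius actions on $V$ should force an extra edge in each surviving case. I expect this case-by-case elimination of the surviving paths to be the hard part.
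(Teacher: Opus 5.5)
Your reduction is correct and is essentially step (1) of the paper. Connectedness follows from Table~\ref{Disconected}, the star is excluded by \Cref{silviaproposition}, and the rules of \Cref{VerticesAndEdges} leave exactly the paths $p-2-3-q$ with $p\neq q$ in $\{5,7,13,17\}$. In fact no path with a large prime as an interior vertex survives, since by parts (c) and (d) such a vertex forces a triangle or a fourth edge. Your remark that minimality makes a minimal normal subgroup $V$ a Sylow subgroup is also fine.

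\textbf{The gap.} Excluding these paths is the whole substance of the theorem, and you give only a plan for it. The plan as stated cannot succeed. You want to force an extra edge from three ingredients: the Frobenius/$2$-Frobenius structure of Hall subgroups, \Cref{dimensions}, and semi-rationality of elements of prime order. But the path $7-2-3-5$ is realized by a solvable group in which all of these constraints hold. Take $S=Q_8\times(C_7\rtimes C_3)=\langle a\rangle\rtimes(\langle b\rangle\times D)$, and let $V$ be the $12$-dimensional faithful irreducible $\F_5S$-module, with Brauer character $\chi+\overline{\chi}$ where $\chi(1)=6$. Then $\langle a\rangle D$ acts fixed-point-freely on $V$ while $\dim\cent V b=4$, so $\ngk{V\rtimes S}$ is exactly $(7-2-3-5)$. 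Moreover, every element of order $p^n$ or $2p^n$ is semi-rational there: the central involution of $D$ inverts $V$, and $b$ induces an automorphism of order $3$ of $\langle a\rangle$. So no extra edge can be forced. The contradiction must come from the rationality hypothesis applied to elements of composite order, or directly to irreducible characters.

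\textbf{What the paper does instead.} It first pins down a minimal counterexample completely:
\begin{itemize}
\item $\fit G=V$ is the unique minimal normal subgroup, and it is the Sylow subgroup for the prime adjacent to $3$; the other prime needs its own argument.
\item The Sylow $3$- and $p$-subgroups have prime order, and $17\notin\pi(G)$.
\item $G$ has a normal $2$-complement.
\item Partly with \texttt{GAP}: $q=5$, $p=7$, $D\cong Q_8$ and $\dim_{\F_5}V=12$. So $G$ has exactly the shape above, with $S$ one of two groups of order $168$.
\end{itemize}
The final contradiction uses $W=\cent V b$, which is $4$-dimensional and $D$-invariant, with $WD$ a non-rational Frobenius group. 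Hence some $w\in W$ is not rational in $G$, and since no element inverts $b$, the element $wb$ of order $15$ is not semi-rational. In the character-quadratic case, \Cref{semiratpel} says nothing about elements of order $15$. There the paper builds an explicit character $\chi=\theta^G$ with $\chi(b)=8\theta(b)$ and $\chi(wb)=\eta(w)\theta(b)$, which forces $[\Q(\chi):\Q]\geq 4$. None of these ingredients appears in your proposal, so it does not prove the theorem.
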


\begin{proof} Assuming that $G$ is a counterexample of minimal order to the statement, we will proceed through a number of steps.

\vspace{0.1cm}\noindent 
{\bf (1) The GK-graph of $G$ is of the form $(p-2-3-q)$ for $p$ and $q$ distinct primes in ${\bf{\{5,7,13,17\}}}$.}

Recall that, by our assumptions and by \Cref{primespectrum}, $\pi(G)$ is a subset of $\{2,3,5,7,13,17\}$ with $|\pi(G)|=4$, and in view of \Cref{VerticesAndEdges}(a,b) it is clear that $\pi(G)$ contains both $2$ and $3$. So, we have $\pi(G)=\{2,3,p,q\}$ for distinct primes $p$ and $q$ lying in $\{5,7,13,17\}$. Observe also that, if $\ngk G$ has less than three edges, then it is disconnected; but the classification of the disconnected GK-graphs of character-quadratic or semi-rational solvable groups appearing in Table~$\ref{Disconected}$ excludes that such a graph can have four vertices. Therefore $\ngk G$ has  at least three edges and, using \Cref{VerticesAndEdges}(b,c,d), it is not difficult to see that one of these edges is $2-3$. Now, as $G$ is a counterexample, $\ngk G$ has exactly three edges, and therefore \Cref{VerticesAndEdges}(c,d) shows that the edge $p-q$ cannot be in $\ngk G$. As the last preliminary observation, since $\ngk G$ is connected and it cannot have three pairwise non-adjacent vertices by \Cref{silviaproposition}, we easily see that $\ngk G$ is of the form $(p-2-3-q)$ for $p$ and $q$ distinct primes in $\{5,7,13,17\}$, as claimed.

\vspace{0.1cm}\noindent 
\textbf{(2) The Fitting subgroup of $G$ is the unique minimal normal subgroup of $G$, and it is either a Sylow $p$-subgroup or a Sylow $q$-subgroup of $G$. In particular, $V=\fit G$ is elementary abelian and conjugation in $G$ induces an irreducible and faithful action of $G/V$ on $V$.}
 
Let $V$ be a minimal normal subgroup of $G$, and observe that $V$ is abelian of prime exponent because $G$ is solvable. We know that $\ngk{G/V}$ is a subgraph of $\ngk G$, but we note that the vertex set of the former must be strictly smaller than that of the latter. In fact, assuming the contrary, $\ngk{G/V}$ cannot have the same edge set of $\ngk G$ by our minimality assumption on $G$ (recall that the properties of being character-quadratic or semi-rational are inherited by factor groups). So, $G/V$ is a character-quadratic or semi-rational group whose GK-graph has four vertices and at most two edges; $\ngk{G/V}$ is hence disconnected, contradicting the fact that a disconnected GK-graph for a character-quadratic or semi-rational solvable group cannot have four vertices (recall Table~$\ref{Disconected}$). We conclude that $V$ is a Sylow subgroup of $G$. Furthermore, again Table $\ref{Disconected}$ yields that $\ngk{G/V}$ must have both $2$ and $3$ in its vertex set, thus $V$ is either a $p$-group or a $q$-group (so, either $V=\oh p G$ or $V=\oh q G$). The fact that $p$ and $q$ are not adjacent in $\ngk G$ implies that one among $\oh p G$ and $\oh q G$ must be trivial, which in turn yields that $V$ is the Fitting subgroup of $G$.

\vspace{0.1cm}\noindent \textbf{(3) The Fitting subgroup $V$ of $G$ is a $q$-group.}

For a proof by contradiction, assume that $V$ is a $p$-group, and let $G_{2'}$ be a Hall $2'$-subgroup of $G$. The GK-graph of $G_{2'}$ is $(p, \;3-q)$, thus, by \Cref{2FrobGK}, $G_{2'}$ is either a Frobenius or a $2$-Frobenius group; but $V$ is clearly the Fitting subgroup also of $G_{2'}$, and by \Cref{Components2Frobenius} (together with the fact that $V$ is a Sylow subgroup of $G_{2'}$) it is easily seen that $G_{2'}$ cannot be a $2$-Frobenius group. Then, $G_{2'}$ being a Frobenius group with kernel $V$, its Sylow $3$-subgroups and $q$-subgroups are cyclic (see \cite[Theorem~18.1]{passpermgroup}). These are of course Sylow subgroups also of $G$, and so by \Cref{Sylow} they have prime order. Moreover, as there are elements of order $3q$ in $G$, the Hall $\{3,q\}$-subgroups of $G$ are cyclic of order $3q$. 

Arguing similarly with a Hall $3'$-subgroup $G_{3'}$ of $G$, we see that $\ngk{G_{3'}}$ is $(2-p,\; q)$ and $G_{3'}$ is either a Frobenius or a $2$-Frobenius group; but the fact that $V=\fit{G_{3'}}$ excludes the structure of a Frobenius group for, in that case, $p$ would be an isolated vertex of $\ngk{G_{3'}}$ as $V$ would be the kernel of $G_{3'}$. Hence $G_{3'}$ is a $2$-Frobenius group whose factors $V$, $\fitd{G_{3'}}/V$ and $G_{3'}/\fitd{G_{3'}}$ are respectively a $p$-group, a $q$-group and a $2$-group. By \Cref{2FrobStructure}, we deduce that the Sylow $2$-subgroups of $G_{3'}$ (hence of $G$) are cyclic, which implies that $G$ has a normal $2$-complement (see \cite[Problem~6.10]{IAlgebra}). In other words, the subgroup $G_{2'}$ of the previous paragraph is the unique Hall $2'$-subgroup of $G$. Another consequence is that a Sylow $2$-subgroup of $G$ is isomorphic to the (character-quadratic or semi-rational) factor group $G/G_{2'}$, hence it is isomorphic either to $C_2$ or to $C_4$.
This implies $17\not\in\pi(G)$, as otherwise an element $y$ of order $17$ of $G$ would be semi-rational in $G$, with $|\bg G y|$ divisible by $\phi(17)/2=8$.
Consider now the factor group $G/V$, which has the normal $2$-complement $G_{2'}/V$ as well. This complement is isomorphic to $C_{3q}$, hence the character-quadratic or semi-rational group $G/V$ has an element $xV$ of order $3q$ generating a normal subgroup. It follows by \Cref{semiratpel} that $xV$ is a semi-rational element of $G/V$, and therefore, if $q$ is $7$ or $13$, there exists an element $gV\in G/V$ of order $3$ which normalizes $\langle xV\rangle$ without centralizing it. But then $gV$ also normalizes the Sylow $3$-subgroup $\langle xV\rangle_3$ of $\langle xV\rangle$, and by the fact that  $\langle xV\rangle_3$ is a Sylow subgroup of $G/V$ we deduce that $\langle gV\rangle=\langle xV\rangle_3$. This is against the condition that $gV$ doesn't centralize $xV$, and shows that we must have $q=5$. At this stage, $xV$ is a semi-rational element of $G/V$ of order $15$, hence $\bg{G/V}{xV}$ is a subgroup of index at most $2$ of $\aut{\langle xV\rangle}\cong C_2\times C_4$; as $G/V$ has cyclic Sylow $2$-subgroups of order at most $4$, we deduce that $G/G_{2'}\cong C_4$, and the structure of $G/V$ is of the kind $C_{15}\rtimes C_4$. 


Finally, let $S\cong G/V$ be a complement for $V$ in $G$, let $b$ be an element of order $3$ in $S$, and define $B=\langle b\rangle $. Observe that $S$ is contained in $\norm G B$, therefore every $G$-conjugate of $B$ can be realized as a $V$-conjugate of $B$. If $v$ is any non-trivial element of $V$ (so, $v$ has prime order $p$), then $v$ is a semi-rational element of $G$ by \Cref{semiratpel}, and therefore there exists an element $t$ of order $3$ in $\norm G{\langle v\rangle}\setminus\cent G v$. But, $T=\langle t\rangle$ being a Sylow $3$-subgroup of $G$, there exists an element $w$ in $V$ such that $B=T^w$. It follows that $B$ is contained in $\norm G{\langle v\rangle^w}\setminus\cent G {v^w}$, and since we clearly have $v^w=v$, we conclude that also $B$ normalizes $\langle v\rangle$ without centralizing it. In other words, if $p=7$ then we have $v^b=v^2$ or $v^b=v^{-2}$ for all $v\in V$, whereas if $p=13$ we have $v^b=v^3$ or $v^b=v^{-3}$ for all $v\in V$. It is now clear that, viewing $V$ as a faithful irreducible module for $S$ over the field $\F_p$ with $p$ elements, the action of $b$ on $V$ is given by a scalar multiplication, which yields (by faithfulness) that $b$ is central in $S$. Now $G/V\cong S$ is the direct product of a cyclic group of order $3$ with a Frobenius group of order $20$, and it is easy to see that such a group is neither character-quadratic nor semi-rational. This contradiction shows that the Fitting subgroup of $G$ must be a $q$-group.   

\vspace{0.1cm}
Our next aim is to describe the Sylow subgroups of $G$.

\vspace{0.1cm}\noindent {\textbf{(4) 
$G$ has a Sylow $p$-subgroup of order $p$, a Sylow $3$-subgroup of order $3$ and a Sylow $2$-subgroup isomorphic to $C_2, C_4, C_8, Q_8$ or $Q_{16}$.
Moreover, $p\in \{7,13\}$, $q\ne 17$ (so, $17\not\in\pi(G)$) and $G/V$ has a normal Sylow $p$-subgroup.}}

Consider a Hall $3'$-subgroup $G_{3'}$ of $G$. Since the GK-graph of $G_{3'}$ is $(2-p,\, q)$ and $V=\fit{G_{3'}}$ is a Sylow $q$-subgroup of $G_{3'}$, we see that $G_{3'}$ is a Frobenius group with kernel $V$. It follows that the Sylow $2$-subgroups of $G_{3'}$ are cyclic or generalized quaternion groups (\cite[Theorem~18.1]{passpermgroup}) and, since they are Sylow $2$-subgroups also for the character-quadratic or semi-rational group $G$, by  \Cref{Sylow} they are all isomorphic to one of $C_2$, $C_4$, $C_8$, $Q_8$, or $Q_{16}$. For the same reason, the Sylow $p$-subgroups of $G$ have order $p$. On the other hand, a Hall $2'$-subgroup $G_{2'}$ of $G$ has GK-graph $(p,\; 3-q)$, and we see that it must be a $2$-Frobenius group whose factors $\fit{G_{2'}}=V$, $\fitd{G_{2'}}/V$, $G_{2'}/\fitd{G_{2'}}$ are respectively a $q$-group, a $p$-group and a $3$-group. By \Cref{2FrobStructure}(a) we deduce that the Sylow $3$-subgroups of $G_{2'}$ are cyclic, and again they  have order $3$ by \Cref{Sylow}. 
Note that, as $G_{2'}/V\cong C_p\rtimes C_3$ is a Frobenius group, we can already exclude that $p$ is $5$ or $17$ (therefore, $p\in\{7,13\}$). Another useful structural feature of $G$ is that, as already observed, a Hall $\{2,p\}$-subgroup $G_{\{2,p\}}$ of $G$ is a Frobenius complement (acting fixed-point freely on $V$), which again by \cite[Theorem~18.1]{passpermgroup}, has a central involution; now an application of Lemma~18.3 of \cite{passpermgroup} yields that $G_{\{2,p\}}$ has a normal Sylow $p$-subgroup, hence every Sylow $p$-subgroup of $G$ is normalized both by a Sylow $3$-subgroup and by a Sylow $2$-subgroup of $G$. Thus $G/V$ has a normal Sylow $p$-subgroup. Finally, if $q=17$, then $G$ has an element $g$ of order $3\cdot 17$. As $G$ is not character-quadratic by \Cref{primespectrum}, $g$ is semi-rational in $G$ and hence $\bg{G}{g}$ is an abelian group of order a multiple of $16$ and this is not compatible with the structure of a Sylow $2$-subgroup of $G$.

\vspace{0.1cm}
In the remainder of the proof, we fix a complement $S$ of $V$ in $G$ and  a Sylow $2$-subgroup $D$ of $S$.

\vspace{0.1cm}\noindent \textbf{(5) $G$ has a normal $2$-complement.} 

We focus on a Hall $\{2,3\}$-subgroup $G_{\{2,3\}}$ of $G$.
By (4), $G_{\{2,3\}}$ is isomorphic to a quotient of $G$, hence it is character-quadratic or semi-rational, and has a Sylow $3$-subgroup of order $3$ and a Sylow $2$-subgroup isomorphic to $C_2$, $C_4$, $C_8$, $Q_8$ or $Q_{16}$.
A group of this kind has always a normal Sylow $3$-subgroup, except when it is isomorphic to ${\rm{SL}}_2(3)$ or to \texttt{SmallGroup([48,28])}$\cong{\rm{SL}}_2(3). C_2$ (here we are using the \texttt{GAP}\cite{GAP} identifier of the group). This is obvious when $D$ is cyclic or isomorphic to $Q_8$, and easily verified with \texttt{GAP} in the remaining case. 

A direct check with \texttt{GAP} shows that the only group of the kind $C_7\rtimes {\rm{SL}}_2(3)$ (and where a Sylow $3$-subgroup acts fixed-point freely on the Sylow $7$-subgroup) is \texttt{[168,23]}, whose structure is $(C_7\times Q_8)\rtimes C_3$. Suppose that the complement $S$ for $V$ in $G$ is isomorphic to this group. 
Let $S_0\cong C_7\times Q_8$ be the normal Hall $\{2,7\}$-subgroup of $S$ and let $\lambda\in\irr{S_0}$ be such that $\lambda=\mu\times\nu$, where $\mu$ and $\nu$ are non-trivial linear characters of $\oh 7 {S_0}$ and $\oh 2 {S_0}$, respectively. Then $\lambda^S$ is an irreducible character of $S$, and it is not difficult to check that its value on an element of order $28$ of $S$ does not lie in a field extension of $\Q$ of degree $2$. This is a contradiction both in the character-quadratic and in the semi-rational case, so $S$ cannot have this isomorphism type. 

Let us consider now the possible structure $C_{13}\rtimes{\rm{SL}}_2(3)$ for $S$. By \texttt{GAP}, there is only one such group in which a Sylow $3$-subgroup acts fixed-point freely on the Sylow $13$-subgroup, and this is \texttt{[312,26]}. Similarly to the previous situation, it turns out that the structure of this group is $(C_{13}\times Q_8)\rtimes C_3$. If the character-quadratic or semi-rational group $S$ has this isomorphism type, then an element $x\in S$ of order $13$ would be semi-rational in $S$ with $|\bg S x|=3$, a contradiction. This case is ruled out.

If $S$ has a structure of the kind $C_p\rtimes({\rm{SL}}_2(3). C_2)$ with $p\in\{7,13\}$ and a Sylow $3$-subgroup acting fixed-point freely on the Sylow $p$-subgroup, then (by \texttt{GAP}) $S$ is isomorphic either to \texttt{[336,118]} or to \texttt{[624,134]}. In both cases, the centralizer in $S$ of an element $x$ of order $p$ is a subgroup of index $2$ in $S$ (so, $|\bg S x|=2$), and this contradicts the fact that $x$ should be a semi-rational element of $S$. These cases are ruled out as well, and we conclude that a Hall $\{2,3\}$-subgroup of $G$ has a normal Sylow $3$-subgroup. Given the structure of $G$, our claim easily follows.

\vspace{0.1cm}\noindent \textbf{(6) A Sylow $2$-subgroup of $G$ is isomorphic to $C_4$, $Q_8$ or $Q_{16}$, and $q=5$.}

In view of the previous step, $G$ has a normal $2$-complement $H$, so $D\cong G/H$ is semi-rational or character-quadratic, and hence it is not isomorphic to $C_8$.
By step (4), we have to show that $|D|\ne 2$ and $q=5$. Recall also that $H$ is a $2$-Frobenius group whose Fitting subgroup $V$ is an elementary abelian $q$-group, and whose top factor $H/\fitd{H}$ has order $3$; hence, by \Cref{FieldsOfValues2Frob}, $H$ has an irreducible character $\lambda$ such that $\Q(\lambda)=\Q_{3q}$. Now, assume that $G$ is character-quadratic and consider an irreducible character $\chi$ of $G$ whose restriction to $H$ has $\lambda$ among its irreducible constituents. By Proposition~2.12 of \cite{PV} we have $\Q(\lambda^G)\subseteq \Q(\chi)$, hence $[\Q(\lambda^G):\Q]\leq 2$, and an application of \cite[Lemma~2.3]{NT} yields that $G/H$ has a section isomorphic to a subgroup of index at most $2$ of $\Gal(\Q_{3q}/\Q)$. In particular, if $q$ is either $7$ or $13$ we would get that $3$ divides $|G/H|$, clearly not the case. We conclude that $q=5$ (recall that we excluded the possibility $q=17$ in step (4)) and $|G/H|$ is at least $4$ (which excludes also the case $D\cong C_2$), so our claim is proved in the character-quadratic case. 
On the other hand, assume that $G$ is semi-rational. We know that $G$ has an element $x$ of order $3q$, which is semi-rational in $G$; if $q$ is either $7$ or $13$, there should exist an element $y\in G$ of order $3$ that normalizes $\langle x\rangle$ without centralizing it. But $y$ would then normalize the Sylow $3$-subgroup $\langle x\rangle_3$ of $\langle x\rangle$, and since $\langle x\rangle_3\in\syl 3 G$ this would force $y\in\langle x\rangle_3$, against the fact that $y$ does not centralize $x$. Therefore $q=5$, and the existence of a semi-rational element of order $15$ in $G$ yields that $D$ cannot have order $2$. Our claim is then proved also for the semi-rational case.  

\vspace{0.1cm}\noindent \textbf{(7) The prime $p$ is $7$.}

Let us assume the contrary, so, $p=13$. We will derive a contradiction by showing (with the help of \texttt{GAP}) that every possible isomorphism type of a complement for $V$ in $G$ is neither character-quadratic nor semi-rational. 

Assume first that $D$ is cyclic of order $4$: according to \texttt{GAP} there are two groups of the kind $(C_{13}\rtimes C_3)\rtimes C_4$ with GK-graph $(13-2-3)$, namely \texttt{[156,1]} and \texttt{[156,2]}. Both are of the kind $C_{13}\rtimes C_{12}$ and can be easily ruled out because $C_{12}$ is neither character-quadratic nor semi-rational.

Next, assume that $D$ is isomorphic to $Q_8$. By \texttt{GAP}, there are two groups of the kind $(C_{13}\rtimes C_3)\rtimes Q_8$ with GK-graph $(13-2-3)$, namely \texttt{[312,8]} and \texttt{[312,24]}. The first has elements of order $52$, which clearly cannot be semi-rational; moreover, a direct inspection of the character table shows that this group has irreducible characters whose field of values coincides with that of an element of order $52$ (for instance, in the notation of \texttt{GAP}, we see that $\Q(\chi.18)=\Q(52a)$). As regards the second group, which is of the form $Q_8\times(C_{13}\rtimes C_3)$, the centralizer of an element of order $13$ has index $2$; thus the elements of order $13$ are not semi-rational. Both these groups can be then discarded.

Finally, let $D\cong Q_{16}$. By \texttt{GAP}, there are three groups of the kind $(C_{13}\rtimes C_3)\rtimes Q_{16}$ with GK-graph $(13-2-3)$: \texttt{[624,12]}, \texttt{[624,22]}, \texttt{[624,58]}. The first two of them have the structure $C_{13}\rtimes(C_3\times Q_{16})$, which is ruled out because $C_3\times Q_{16}$ is obviously neither character-quadratic nor semi-rational. As for the latter, its structure is $Q_{16}\times(C_{13}\rtimes C_3)$, which is also easily seen to be neither character-quadratic nor semi-rational. The proof of this step is complete.

\vspace{0.1cm}\noindent \textbf{(8) A Sylow $2$-subgroup of $G$ is isomorphic to $Q_8$.}

Assuming the contrary, the Sylow $2$-subgroup $D$ of $G$ would be isomorphic either to $C_4$ or to $Q_{16}$. By \texttt{GAP}, the groups of the kind $(C_7\rtimes C_3)\rtimes C_4$ or $(C_7\rtimes C_3)\rtimes Q_{16}$ whose GK-graph is $(7-2-3)$ are the following: \texttt{[84,1]} with structure $C_7\rtimes C_{12}$; \texttt{[84,2]} with structure $C_4\times (C_7\rtimes C_{3})$; \texttt{[336,11]} and \texttt{[336,21]} with structure $C_7\rtimes (C_{3}\times Q_{16})$; \texttt{[336,55]} with structure $Q_{16}\times(C_7\rtimes C_{3})$. All of them can be discarded by elementary considerations.

\vspace{0.1cm}\noindent {\textbf{(9) $G=V\rtimes S$, where $V$ is an elementary abelian $5$-group of order $5^{12}$ and $S=\GEN{a}\rtimes (\GEN{b}\times D)$ with  $|a|=7$, $|b|=3$, $D\cong Q_8$. Moreover, $\GEN{a}\rtimes\GEN{b}$ is a Frobenius group.}} 

The last claim clearly follows from the fact that $a$ is semi-rational in $S$, so by (4), (5), (6), (7) and (8), we only need to prove that $b$ commutes with the Sylow $2$-subgroup $D$, and $\dim_{\F_5} V=12$ (here we are considering $V$ as an irreducible faithful $S$-module over the field $\F_5$ with $5$ elements).
As $S/C_S(a)$ embeds in $\aut{C_7}\cong C_6$, we have that $S'\subseteq C_S(a)$. This implies that $b\not \in S'$. Hence, $[\GEN{b},D]\subseteq \GEN{b}\cap S'=1$ as $D$ normalizes $\GEN{b}$. As a consequence, $S$ is isomorphic either to $\texttt{[168,21]}$ or to $\texttt{[168,7]}$, whose structures are of the kind $Q_8\times (C_7\rtimes C_3)$ and $C_7\rtimes (C_3\times Q_8)$, respectively.

An inspection of the character tables of these groups shows that both have one pair of Galois conjugate irreducible faithful characters $\chi$ and $\overline{\chi}$. 
Moreover, $\chi(1)=6$ and $\Q(\chi)=\Q(\sqrt{-7})$ for the first option for $S$, whereas $\Q(\chi)=\Q(\sqrt{7})$ for the second. 
Consider $V$ as an irreducible $\F_5S$-module and let $\F$ be a minimal splitting field of $V$. 
Then the Brauer character of any absolutely irreducible constituent of $\F\otimes_{\F_5} V$ is either $\chi$ or $\overline{\chi}$. As $7\equiv 2\,\mod\, 5$ and neither $2$ nor $-2$ are squares in $\F_5$, it follows that $\F=\F_{25}$ and the Brauer character afforded by $V$ is $\chi+\overline{\chi}$. In particular, $\dim_{\F_5}(V)=12$.

\vspace{0.1cm}\noindent \textbf{(10) $W=\cent{V}{b}$ has dimension $4$ over $\F_5$ and it is invariant under $\GEN{b}\times D$.}

As $\chi(b)=0$, the multiplicities of the three $3$rd-roots of unity as eigenvalues in the action of $b$ on $V$ are equal. 
In particular, $\dim_{\F_5} W=4$.
Moreover, as $[\GEN{b},D]=1$, if $g\in D$, then $w^{gb}=w^{bg}=w^g$ and so $w^g\in W$. Clearly, $W$ is invariant by $b$.

\vspace{0.1cm}\noindent \textbf{(11) $G$ is not semi-rational.} 

Using the previous notation, we observe that $WD$ is a Frobenius group because our group $G$ does not have elements of order $10$. By \cite[Main Theorem, a)]{PV}, $WD$ is not a rational group, and hence there is an element $w\in W$ which is not rational in $WD$. We claim that $w$ is not rational in $G$ as well. Assuming the contrary, let $g\in G$ be such that $w^g=w^2$. We can clearly assume $g\in S$, but we can also assume that $g=a^\epsilon z$ for $z\in D$ and $\epsilon \in \{1,...,6\}$, because $b$ commutes with $w$ and $\epsilon=0$ would contradict the fact that $w$ is not rational in $WD$. Now, recalling that $W$ is $D$-invariant, we see that $$w^2=w^g=w^{a^\epsilon z}\in W\; \text{ if and only if }\; w^{a^\epsilon}\in W,$$ 
which is equivalent to the fact that $b^{a^{-\epsilon}}\in \cent{S}{w}$. But the latter condition would imply $b^{-1}b^{a^{-\epsilon}}=(a^{\epsilon})^ba^{-\epsilon}=[b,a^{-\epsilon}]\in \cent{S}{w}\cap \GEN{a}=1$, because $G$ does not have elements of order $5\cdot 7$, which yields the contradiction $[b,a]=1$.

Observe that if $wb$ is semirational in $G$ then $\bg{G}{wb}$ is isomorphic to a subgroup of $C_4\times C_2$ of index at most $2$. Since no element of $G$ inverts $b$, then $w$ is rational in $G$, which is a contradiction with the above paragraph. Hence, $wb$ is not semi-rational in $G$, so $G$ is not a semi-rational group.

\vspace{0.1cm}\noindent \textbf{(12) $G$ is not character-quadratic (final contradiction).} 

Recall that $W$ is $\GEN{b}\times D$-invariant, so, by Maschke's Theorem, $V=W\oplus \overline{W}$ for some $\GEN{b}\times D$-invariant subspace $\overline{W}$ of $V$. 
By the above discussion, $W$ has an element $w$ which is not rational in $WD$, and hence there exists an irreducible character $\eta$ of the Frobenius group $VD/\overline{W}\cong WD$ such that $\eta(w\overline{W})\not \in \Q$. Since the Frobenius kernel $V/\overline{W}$ of $VD/\overline{W}$ is not contained in $\ker \eta$ because $w\overline{W}\in V/\overline{W}\setminus\ker \eta$, by \cite[Theorem 18.7]{Hu} there exists $\lambda\in \Irr(V/\overline{W})$ such that $\eta=\lambda^{VD/\overline{W}}$. We can view $\eta$ as an irreducible character of $VD$, by inflation, and we have $\eta(w)\not \in \Q$.

By Brauer Permutation Lemma (\cite[Theorem 6.32]{I}) we have $V\subseteq I_G(\lambda)\subseteq V\GEN{b}$, because $\GEN{a}D$ acts fixed-point freely on $V$. But since $b$ centralizes $V/\overline{W}$ and $\lambda\in\irr{V/\overline{W}}$, we clearly have $b\in I_G(\lambda)$ and hence $I_G(\lambda)=V\GEN b$. Also, by \cite[(13.3)]{I} and Gallagher's Theorem (\cite[(6.17)]{I}), there exists an extension $\theta\in\irr{V\GEN b}$ of $\lambda$ such that $\Q(\theta(b))=\Q_3$.
Now, defining $U=V\GEN{b}D$, $\varphi=\theta^U$ and $\chi=\theta^G$, by Clifford theory we get $\varphi\in \Irr(U)$ and $ \chi\in \Irr(G)$. Observe that $\Q(\chi)$ contains $\Q(\chi(b),\chi(wb))$. Moreover, we have

$$\chi(b)=\theta^G(b)=\sum_{t\in \GEN{a}D}\theta^{\circ}(tbt^{-1})= 8\cdot\theta(b)$$ where we are taking into account that $\GEN{a}D$ is a right transversal for $V\GEN{b}$ in $G$, that $D$ centralizes $b$, and $b^{a^i}\not \in V\GEN{b}$ for every $i\in\{1,\ldots,6\}$. In particular, $\Q(\chi(b))=\Q(\theta(b))=\Q_3$. On the other hand, 

$$\chi(wb)=\varphi^G(wb)=\sum_{i=0}^6 \varphi^\circ(a^iwba^{-i})=\varphi(wb)$$ because $(wb)^{a^i}\in U$ if and only if $b^{a^i}\in U$, which implies $[a^{i},b]\in U\cap \GEN{a}=1$ and so $i=0$. As $D$ is a right transversal for $V\GEN{b}$ in $U$ and using again that $b$ commutes with $D$, we then get
	\[\varphi(wb)=\theta^U(wb)=\sum_{t\in D}\theta^\circ(twbt^{-1})= \big(\sum_{t\in D}\theta^\circ(twt^{-1})\big)\theta(b) =\theta^U(w)\theta(b).\]
Finally, note that by Mackey's Lemma (\cite[(Problem 5.6)]{I}) we have $(\theta^U)_{VD}=(\theta_{V})^{VD}=\lambda^{VD}=\eta$. Thus

$$\chi(wb)=\theta^U(w)\theta(b)=\eta(w)\theta(b);$$ now, since $\eta(w) \in \Q_5\setminus \Q$ and $\theta(b)\in\Q_3\setminus\Q$, we conclude that $\chi(wb)\not \in \Q_3$, and so $\Q(\chi)\supseteq \Q(\chi(b),\chi(wb))$ is not a quadratic extension of $\Q$.
\end{proof}

\end{document}